\newtheorem{theorem}{Theorem}[section]
\newtheorem{corollary}{Corollary}[section]
\newtheorem{lemma}{Lemma}[section]
\newtheorem{proposition}{Proposition}[section]
\newtheorem{definition}{Definition}[section]
\newtheorem{remark}{Remark}[section]
\makeatletter \@addtoreset{figure}{section} \makeatother
\renewcommand{\thefigure}{\arabic{section}.\arabic{figure}}
\numberwithin{equation}{section}
\title[Two-dimensional steady supersonic exothermically reacting Euler flows]
      {%Quasi-one-dimensional approximation of
      Two-dimensional steady supersonic exothermically reacting Euler flows with strong contact discontinuity over Lipschitz \textcolor[rgb]{1.00,0.00,0.00}{wall}}
\author{Wei Xiang}
\address[W. Xiang]{Department of Mathematics, \
 \small City University of Hong Kong, Kowloon, Hong Kong, P. R. China}
\email{\tt  weixiang@cityu.edu.hk}
\author{Yongqian Zhang}
\address[Y. Zhang]{School of Mathematical Sciences, \
 \small Fudan University, Shanghai 200433, P. R. China}
\email{\tt yongqianz@fudan.edu.cn}
\author{Qin Zhao}
\address[Q. Zhao]{School of Mathematical Sciences, \
\small Fudan University, Shanghai 200433, P. R. China}
\email{\tt qinzhao11@fudan.edu.cn}
\keywords{supersonic flow, reacting Euler equations, Glimm scheme, fractional-step, Glimm functional, contact discontinuity, stability, quasi-one-dimensional approximation.}
\subjclass[2010]{35L65, 35L50, 76N10, 35B35, 35A01.}
\begin{document}
\begin{abstract}
In this paper, we established the global existence of supersonic entropy solutions with a strong contact discontinuity over Lipschitz wall governed by the two-dimensional steady exothermically reacting Euler equations, when the total variation of both initial data and the slope of Lipschitz wall is sufficiently small.
Local and global estimates are developed and a modified Glimm-type functional is carefully designed. Next the validation of the quasi-one-dimensional approximation in the domain bounded by the wall and the strong contact discontinuity is rigorous justified by proving  that the difference between the average of weak solution and the solution of quasi-one-dimensional system can be bounded by the square of the total variation of both initial data and the slope of  Lipschitz wall. The methods and techniques developed here is also helpful for other related problems.
\end{abstract}
\maketitle

\section{Introduction}
We are concerned with the global existence and the quasi-one-dimensional approximation of entropy solutions of two-dimensional steady supersonic exothermically reacting Euler flows, which are governed by
\begin{equation}\label{eq-reaction-1}
\begin{cases}
(\rho u)_x+(\rho v)_y = 0,\\
(\rho u^2+p)_x+(\rho uv)_y = 0,\\
(\rho uv)_x+(\rho v^2+p)_y = 0,\\
\big((\rho E+p)u\big)_x+\big((\rho E+p)v\big)_y =0,\\
(\rho uZ)_x+(\rho vZ)_y = -\rho\phi(T)Z.
\end{cases}
\end{equation}
Here $(u,v)$ is the velocity. $p$, $\rho$, $\phi(T)$, and $Z$ stand for the scalar pressure, the density, the reaction rate, and  the fraction of unburned gas, respectively. $E$ denotes the specific total energy and is given by
\begin{equation}
E=e+\frac{1}{2}(u^2+v^2)+q_0Z,
\end{equation}
where $e$ is the specific internal energy, and  $q_0>0$ is the specific binding energy of unburned gas.
%The other two thermodynamic variables are the temperature $T$ and entropy $S$, which are defined through thermodynamical relations:
%\begin{equation}
%TdS=de-\frac{p}{\rho^{2}}d\rho.
%\end{equation}

If $(\rho, S)$ are chosen as the independent variables,  then we have the following constitutive relations that
\[
(e,p,T)=(e(\rho, S),p(\rho, S),T(\rho, S)).
\]
In particular, $\partial_{\rho}p(\rho, S)>0$ and $\partial_{\rho}e(\rho, S)>0$ for $\rho>0$, and $c=\sqrt{\partial_{\rho}p(\rho, S)}$ is called the local sound speed.

For the ideal polytropic gas, the constitutive relations are
\[
p=R\rho T,\quad e=c_vT,\quad \gamma=1+\frac {R}{c_v}>1,
\]
where $R,c_v,\gamma$ are all positive constants. Then the sonic speed is given by $c=\sqrt{\gamma p/\rho}$.

In this paper,
%we study quasi-one-dimensional approximation of two-dimensional steady supersonic exothermically reacting Euler flows. To be more specific, we shall estimate the difference between the solutions of the systems \eqref{eq-reaction-2} and \eqref{eq-QO2}.
we will study the two-dimensional steady supersonic exothermically reacting Euler flow with a strong contact discontinuity over Lipschitz wall under a BV boundary perturbation (see Fig. \ref{1}) and  assume the following:
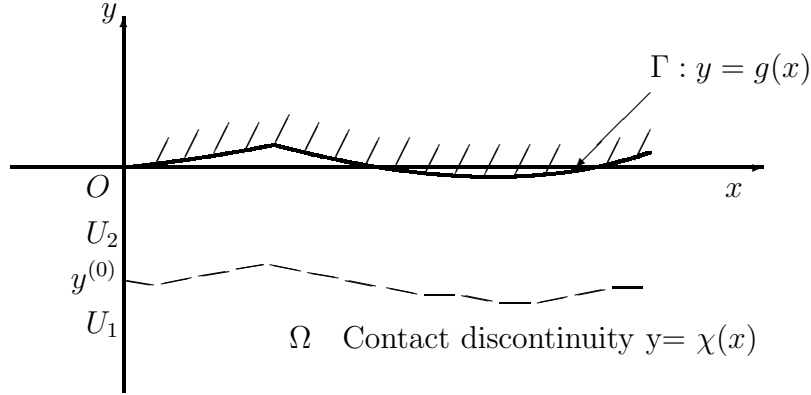
\begin{figure}[ht]
\begin{center}
\setlength{\unitlength}{1mm}
\begin{picture}(120,50)(1,4)
\linethickness{1pt}
\put(30,0){\vector(0,1){50}}
\put(15,30){\vector(1,0){100}}
\qbezier(30,30)(40,31)(50,33)
\qbezier(50,33)(80,25)(100,32)
\put(25,26){$O$}
\put(110,26){$x$}
\put(27,50){$y$}
\put(25,20){$U_2$}
\put(25,8){$U_1$}
\put(23,14){${y}^{(0)}$}
\thinlines
\put(30,15){\line(6,-1){4}}
\put(35,14.5){\line(5,1){4}}
\put(40,15.5){\line(6,1){4}}
\put(45,16.5){\line(6,1){3.6}}
\put(49.5,17){\line(5,-1){4}}
\put(54.5,16){\line(6,-1){4}}
\put(59.5,15){\line(5,-1){4}}
\put(65,14){\line(5,-1){4}}
\put(70,13){\line(1,0){4}}
\put(75,13){\line(5,-1){4}}
\put(80,12){\line(1,0){4}}
\put(85,12){\line(5,1){4}}
\put(90,13){\line(5,1){4}}
\put(95,14){\line(1,0){4}}
\put(34,30){\line(1,2){2}}
\put(38,31){\line(1,2){2}}
\put(42,32){\line(1,2){2}}
\put(46,32){\line(1,2){2}}
\put(50,33){\line(1,2){2}}
\put(54,32){\line(1,2){2}}
\put(58,31){\line(1,2){2}}
\put(62,30){\line(1,2){2}}
\put(66,30){\line(1,2){2}}
\put(70,29){\line(1,2){2}}
\put(74,29){\line(1,2){2}}
\put(78,29){\line(1,2){2}}
\put(82,29){\line(1,2){2}}
\put(86,29){\line(1,2){2}}
\put(94,30){\line(1,2){2}}
\put(98,31){\line(1,2){2}}
\put(100,40){\vector(-1,-1){10}}
\put(100,42){$\Gamma:y=g(x)$}
\put(52,6){$\Omega$\quad Contact discontinuity y= $\chi(x)$}
\end{picture}
\end{center}
\caption{Reacting Euler flow over Lipschitz wall.}\label{1}
\end{figure}

\begin{enumerate}[(H1)]
\item There exists a Lipschitz function $g(x)\in \mathrm{Lip}(\mathbb{R}_+;\mathbb{R})$ with that $g(0)=0, g'(0+)=0,$ and that $g'(x) \in BV(\mathbb{R}^+;\mathbb{R})$ such that
   \[
\Omega=\{(x,y):y<g(x), x>0\},\quad \Gamma=\{(x,y):y=g(x), x\geq 0\},
\]
and $\textbf{n}(x\pm)=\frac{(-g'(x\pm),1)}{\sqrt{(g'(x\pm))^2+1}}$ is the outer normal vectors to $\Gamma$ at the points $x\pm$, respectively.
\item The upstream flow consists of two states $U_2(y)=(u_2,v_2,p_2,\rho_2,Z_2)^\top(y)$ when ${y}^{(0)}<y<0$ and $U_1(y)=(u_1,v_1,p_1,\rho_1,Z_1)^\top(y)$ when $y<{y}^{(0)}$, which satisfy that
\[
u_i>c_i>0,\,\,0\leq Z_i\leq 1,\,\, \lim\limits_{y\to-\infty}Z_1(y)=0,
\]
where $c_i=\sqrt{\gamma{p}_i/{\rho}_i}$ is the sonic speed of state $U_i$, for $i=1,2.$
\item There exists a positive constant ${T}^{(0)}>0$, such that $T_i(y)>{T}^{(0)}$,  for $i=1,2$.
\end{enumerate}
\begin{remark}
Assumption $\mathrm{(H3)}$ is to make sure that $\phi(T)$ adimits a positive minimum value. Typically, $\phi(T)$ has the Arrhenius form which vanishes only at absolute zero temperature in \cite{ChenWagner2003}:
\[
\phi(T)=T^{\mu}e^{-\mathcal{E}/RT},
\]
where $\mu$ is a positive constant, and $\mathcal{E}$ is the action energy.
\end{remark}

The problem we are concerned with is the following initial-boundary value problem of system \eqref{eq-reaction-1} in $\Omega$ with the initial condition that
\begin{equation}\label{I}
U(0,y)=U_0(y)=
\begin{cases}
U_2(y), \qquad {y}^{(0)}<y<0,\\
U_1(y), \qquad y<{y}^{(0)},
\end{cases}
\end{equation}
and the boundary condition that
\begin{equation}\label{B}
(u,v)\cdot \textbf{n}=0\quad \text{on}\,\, \Gamma.
\end{equation}

Then we can define the global entropy solutions of problem  \eqref{eq-reaction-1} and \eqref{I}-\eqref{B}.
\begin{definition}(entropy solutions)
A $B.V.$ function $U=U(x,y)$ is called a global entropy solution of problem \eqref{eq-reaction-1} and \eqref{I}-\eqref{B} if
\begin{enumerate}
\item [$\mathrm{1)}$]
 $U$ is a weak solution of \eqref{eq-reaction-1} in $\Omega$ and satisfies \eqref{I}-\eqref{B} in the trace sense;
\item [$\mathrm{2)}$]
 $U$ satisfies the entropy inequality that
 \[
 (\rho u S)_x+(\rho v S)_y\geq \frac{q_0\rho\phi(T)Z}{T}
 \]
 in the distribution sense in $\overline{\Omega}$.
\end{enumerate}
\end{definition}

Our first result is to establish the nonlinear stability of strong contact discontinuity in the supersonic exothermically reacting Euler flows around a background solution, which is given by the case that $g(x)=0$. In this case, the problem admits a solution consisting of two constant states:
\[
U=
\begin{cases}
{U}_2^{(0)}=({u}_2^{(0)},0,{p}_2^{(0)},{\rho}_2^{(0)},0), \qquad {y}^{(0)}<y<0,\\
{U}_1^{(0)}=({u}_1^{(0)},0,{p}_1^{(0)},{\rho}_1^{(0)},0), \qquad y<{y}^{(0)},
\end{cases}
\]
where ${p}_2^{(0)}={p}_1^{(0)}$, ${u}_i^{(0)}>{c}_i^{(0)}>0,$ and the sonic speed ${c}_i^{(0)}=\sqrt{\gamma{p}_i^{(0)}/{\rho}_i^{(0)}}$, for $i=1,2$.

More precisely, we proved the following theorem.

\begin{theorem}\label{theorem-exist}
Under assumptions $\mathrm{(H1)}$-$\mathrm{(H3)}$, there exist positive constants $\delta_0$ and C, such that if
\begin{equation}\label{assum-g}
T.V.\{g'(\cdot): [0,+\infty)\}<\delta_0,
\end{equation}
and
\begin{gather}
\sup\limits_{y<{y}^{(0)}}|U_1(y)-{U}_1^{(0)}|+\sup\limits_{{y}^{(0)}<y<0}|U_2(y)-{U}_2^{(0)}|<\delta_0,\label{assum-u-1}\\
T.V.\{U_1(\cdot):(-\infty,{y}^{(0)})\}+T.V.\{U_2(\cdot):({y}^{(0)},0)\}<\delta_0,\label{assum-u-2}
\end{gather}
then the initial-boundary value problem \eqref{eq-reaction-1} and \eqref{I}-\eqref{B} admits a global entropy solution $U(x,y)\in BV_{loc}(\Omega)\cap L^{\infty}(\Omega)$ such that the following hold:
\begin{enumerate}[(i)]
\item for every $x\in [0,+\infty),$
\begin{equation}\label{estimate-tv-u}
T.V.\{U(x,\cdot): (-\infty,g(x)]\}\leq C\delta_0.
\end{equation}
\item The Lipschitz curve $\{y=\chi(x)\}$ is a strong contact discontinuity emanating from the point $(0,{y}^{(0)})$ with $\chi(x)<g(x)$ for any $x>0$, and that
\begin{equation}\label{sup-u}
\sup\limits_{y<\chi(x)}|U(x,y)-{U}_1^{(0)}|\leq C\delta_0,\quad \sup\limits_{\chi(x)<y<g(x)}|U(x,y)-{U}_2^{(0)}|\leq C\delta_0.
\end{equation}
\end{enumerate}
\end{theorem}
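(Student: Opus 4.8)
The plan is to construct the solution by a fractional-step Glimm scheme tailored to the three structural features of the problem: the Lipschitz wall $\Gamma$, the strong contact discontinuity, and the reaction source term. Treating $x$ as the ``time'' variable (legitimate as long as the flow stays supersonic, which the a priori bounds below will guarantee), I would first straighten the wall by a bi-Lipschitz change of variables sending $\Gamma$ to $\{y=0\}$; in the new coordinates the system \eqref{eq-reaction-1} keeps its hyperbolic structure while $g'$ enters as a $BV$ coefficient, and the slip condition \eqref{B} becomes a homogeneous boundary condition on a fixed line. I would then set up the three Riemann building blocks: (a) the classical Riemann problem for the homogeneous part of \eqref{eq-reaction-1} near the two background states $U_1^{(0)}$, $U_2^{(0)}$, with its standard wave-curve parametrization; (b) the lateral Riemann problem at the wall, producing a reflected wave whose strength is bounded by a constant times the incoming strength plus the local jump of $g'$; and (c) the Riemann problem across the strong contact discontinuity, where the left and right states lie in small neighborhoods of $U_1^{(0)}$ and $U_2^{(0)}$ respectively but are $O(1)$ apart --- here one must solve for the perturbed contact discontinuity together with the transmitted and reflected weak waves and, crucially, obtain explicit bounds showing that the reflection coefficient off the contact discontinuity stays strictly controlled.

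\textbf{Interaction estimates and the Glimm functional.} Next I would prove the local interaction estimates needed to run the scheme: the usual quadratic Glimm estimate for two interacting weak waves $\alpha,\beta$, the new wave strengths differing from $\alpha+\beta$ by a term of order $|\alpha|\,|\beta|$; the estimate for a weak wave crossing the strong contact discontinuity (a linear reflection/transmission law plus a quadratic error, uniform in the $O(1)$ size of the background jump); the estimate for a weak wave hitting the wall (reflected strength bounded by a constant times $|\alpha|$ plus the local variation of $g'$); and the fractional-step estimate for the ODE update induced by $-\rho\phi(T)Z$, which contributes to the total variation an amount controlled by $\int\rho\phi(T)Z\,dx$; by $\mathrm{(H3)}$, $\phi(T)\ge\phi_0>0$, so $Z$ decays exponentially along streamlines and this integral is finite and small, with the increment over a single step of order $\Delta x$. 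With these in hand I would define, on each mesh curve $J$, a Glimm-type functional
\[
F(J)=V(J)+K\,Q(J)+\kappa\, T.V.\{g';\,(x_J,+\infty)\},
\]
where $V(J)$ is a \emph{weighted} total variation of the weak waves --- the weights chosen to jump across the contact discontinuity so as to absorb its reflection coefficient --- $Q(J)$ is an interaction potential pairing weak waves with one another, with the strong contact discontinuity, and with the wall, and $K,\kappa$ are suitably large constants. Using the interaction estimates one checks that $F$ is non-increasing from one mesh curve to the next (including across the ODE sub-steps, up to the small reaction increment absorbed by the decay of $Z$), whence the uniform bounds $T.V.\{U_{\Delta x}(x,\cdot)\}\le C\delta_0$ and $\|U_{\Delta x}-U^{(0)}\|_{L^\infty}\le C\delta_0$, and the fact that the approximate contact discontinuity stays within $C\delta_0$ of its background position and strength.

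\textbf{Passage to the limit and verification.} By Helly's theorem a subsequence $U_{\Delta x}\to U$ in $L^1_{loc}(\Omega)$, and the approximate contact-discontinuity curves converge uniformly on compact sets to a Lipschitz curve $y=\chi(x)$. By Glimm's argument for an equidistributed sampling sequence the Glimm residual tends to zero, and the fractional-step error tends to zero thanks to the $L^1$ bound on the source; hence $U$ is a weak solution of \eqref{eq-reaction-1} satisfying \eqref{I}-\eqref{B} in the trace sense, with $y=\chi(x)$ a strong contact discontinuity emanating from $(0,y^{(0)})$ and $\chi(x)<g(x)$ for $x>0$, the separation of the two curves being bounded below by the background geometry. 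The entropy inequality follows from the Lax admissibility built into each Riemann solver together with the signs $q_0>0$ and $-\rho\phi(T)Z\le0$. Finally the estimates \eqref{estimate-tv-u} and \eqref{sup-u} are inherited in the limit from the uniform bounds on $U_{\Delta x}$.

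\textbf{Main obstacle.} The crux is the design of the weighted functional $F$ --- in particular, choosing the weights on the two sides of the strong contact discontinuity so that the (not small) reflection produced each time a weak wave meets it does not drive the total variation up, while simultaneously accommodating the wall reflections, which feed in the $BV$ norm of $g'$, and the nonconservative reaction increments. A closely related point is that the interaction estimate at the contact discontinuity must be uniform with respect to the $O(1)$ background jump, so that discontinuity cannot be treated perturbatively; this forces the explicit computation of the reflection and transmission coefficients and the verification that the relevant amplification factor is strictly dominated by what the weights can absorb.
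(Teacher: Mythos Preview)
Your proposal is correct and follows essentially the same strategy as the paper: fractional-step Glimm scheme with $x$ as the evolution variable, three Riemann solvers (interior weak, wall, strong contact), a weighted Glimm functional containing a boundary term that absorbs the remaining variation of $g'$, exponential decay of $Z$ from $\mathrm{(H3)}$ to control the reaction increments, and compactness plus Glimm's consistency argument to pass to the limit. The one substantive difference is that the paper does \emph{not} straighten the wall. Instead it approximates $\Gamma$ by a piecewise-linear boundary with corners at $x=kh$ and solves a lateral Riemann problem at each corner $P_k$, the turning angle $\omega_k$ playing exactly the role of your ``local jump of $g'$''; the functional then carries the term $L_0(J)=\sum_{P_k\in J^+}|\omega_k|$, which is your $T.V.\{g';(x_J,\infty)\}$ in discrete form. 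Your bi-Lipschitz straightening would push $g'$ into the flux as a merely $BV$ coefficient, and running Glimm on a system with discontinuous coefficients forces you to reintroduce essentially the same corner analysis at each jump of $g'$; the paper's direct treatment avoids this detour. The crux you correctly isolate---that the reflection coefficient off the strong contact must be strictly dominated by the weights---is precisely the paper's explicit computation $|K_{25}|<1$ for a weak $1$-wave hitting the contact from above, and the weights in the functional are then chosen so that $K^*_{11}|K_{21}|+K^*_{25}|K_{25}|<1$ and $K^*_{15}>K^*_{25}|K_{15}|+K^*_{11}|K_{11}|$, which closes the estimate on both sides of the contact.
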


\medskip
Our second result is about the quasi-one-dimensional approximation.
If the flow is slowly various in the $y$ direction compared to the $x$-direction, we can introduce the quasi-one-dimensional approximation in the domain $\{ (x,y)| x>0, \, \chi(x)<y<g(x)\}$ as follows.  Neglect the changes of the solutions in $y$ direction, and let $A(x)$ be the distance between the wall and the strong contact discontinuity. Then the motion of the steady exothermically reacting Euler flows in the domain $\{ (x,y)| x>0, \, \chi(x)<y<g(x)\}$ can be described by the much simpler quasi-one-dimensional model as follows.
\begin{equation}\label{eq-QO1}
\begin{cases}
(\rho u A(x))_x = 0,\\
((\rho u^2+p)A(x))_x = A'(x)p,\\
\big((e+\frac{1}{2}u^2+\frac{p}{\rho})\rho u A(x)\big)_x =q_0A(x)\rho\phi(T)Z,\\
(\rho u Z A(x))_x = -A(x)\rho\phi(T)Z.
\end{cases}
\end{equation}

Let $(\bar{\rho}_{0},\bar{u}_{0},\bar{p}_{0},\bar{Z}_{0})^\top$ be the integral average of the initial data $U_2(y)$ in the interval ${y}^{(0)}<y<0$, that is
\[ (\bar{\rho}_{0},\bar{u}_{0},\bar{p}_{0},\bar{Z}_{0})^\top=\frac{1}{|{y}^{(0)}|}\int_{{y}^{(0)}}^{0} U_2(y)dy .\]
 Let $\bar{U}(x)=(\bar{\rho},\bar{u},\bar{p},\bar{Z})^\top$ be the integral average of the solution of system \eqref{eq-reaction-1} with respect to $y$ between the wall and the strong contact discontinuity, that is,
\[\bar{ U}(x)=\frac{1}{A(x)}\int_{\chi(x)}^{g(x)} U(x,y)dy,\]
and let $U_A(x)=(\rho_{A},u_{A},p_{A},Z_{A})^\top$ the solution of system \eqref{eq-QO1} with the initial data $U_{A,0}=(\bar{\rho}_{0},\bar{u}_{0},\bar{p}_{0},\bar{Z}_{0})^\top$. Then our second result related to the quasi-one-dimensional approximation in the domain $\{ (x,y)| x>0, \, \chi(x)<y<g(x)\}$ is as follows.

\begin{theorem}\label{theorem-Quasi-One}
%Under assumptions $\mathrm{(H1)}$-$\mathrm{(H3)}$, there exist positive constants $\delta_0$ and C, if
%\[
%T.V.\{g'(\cdot): [0,+\infty)\}<\delta_0,
%\]
%and
%\begin{gather*}
%\sup\limits_{y<\bar{y}}|U_1(y)-\bar{U}_1|+\sup\limits_{\bar{y}<y<0}|U_2(y)-\bar{U}_2|<\delta_0,\\
%T.V.\{U_1(\cdot):(-\infty,\bar{y})\}+T.V.\{U_2(\cdot):(\bar{y},0)\}<\delta_0,
%\end{gather*}
Under assumptions $\mathrm{(H1)}$-$\mathrm{(H3)}$, there exist positive constants $\delta_0$ and C, such that if \eqref{assum-g}-\eqref{assum-u-2} hold,
then for any $x\geq 0$, it holds that
\begin{center}
$|\bar{U}(x)-U_A(x)|\leq C\delta_*^2$,
\end{center}
where
$
\delta_*=T.V.\{g'(\cdot): [0,+\infty)\}+T.V.\{U_1(\cdot):(-\infty,{y}^{(0)})\}+T.V.\{U_2(\cdot):({y}^{(0)},0)\}
+\sup\limits_{{y}^{(0)}<y<0}|U_2(y)-{U}_2^{(0)}|.
$
\end{theorem}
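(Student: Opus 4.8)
The plan is to write down the exact system of ordinary differential equations satisfied by the slice-average $\bar U(x)$, to show that it coincides with the quasi-one-dimensional system \eqref{eq-QO1} up to a right-hand side that is \emph{quadratic} in $\delta_*$, and then to run a Gronwall comparison against $U_A(x)$. The quadratic gain is possible because $\bar U(0)=U_{A,0}$ by construction, so the only seeds of the discrepancy are of two types: variance-type quantities $\overline{fg}-\bar f\,\bar g$, which are quadratic in the $y$-oscillation $\tilde U(x,\cdot):=U(x,\cdot)-\bar U(x)$; and terms carrying the transverse velocity $v$ or the slopes $g',\chi'$, each of which is already $O(\delta_*)$. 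The hard part is the global-in-$x$ bookkeeping needed to make these bounds hold uniformly.

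\textbf{Step 1 (averaged equations).} Integrate the four conservation laws of \eqref{eq-reaction-1} over $y\in(\chi(x),g(x))$ and apply Leibniz's rule. For each law the combined $y=g(x)$ boundary term has the form $\big(\mathrm{flux}^{y}-g'\,\mathrm{flux}^{x}\big)\big|_{y=g}$; since the slip condition \eqref{B} reads $v=g'u$ on $\Gamma$, this reduces to $0$ for the mass and species laws, to $-g'p(x,g)$ for the $x$-momentum law, and to $0$ for the energy law. On the strong contact $y=\chi(x)$ the flow is tangent to the curve, $v=\chi'u$, and pressure is continuous, so the analogous $y=\chi(x)^{+}$ terms reduce to $0$, $-\chi'p(x,\chi^{+})$, $0$. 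This yields the exact identities
\[
\big(\overline{\rho u}\,A\big)_x=0,\qquad \big(\overline{\rho u^{2}+p}\,A\big)_x=g'p(x,g)-\chi'p(x,\chi^{+}),
\]
\[
\big(\overline{(\rho E+p)u}\,A\big)_x=0,\qquad \big(\overline{\rho uZ}\,A\big)_x=-\textstyle\int_{\chi}^{g}\rho\phi(T)Z\,dy .
\]
Writing $(\rho E+p)u=\rho u\big(e+\tfrac12u^{2}+\tfrac p\rho\big)+\rho u\big(\tfrac12 v^{2}+q_{0}Z\big)$ and using the mass and species identities, the energy identity turns into
\[
\Big(\overline{\rho u\big(e+\tfrac12u^{2}+\tfrac p\rho\big)}\;A\Big)_x=q_{0}\textstyle\int_{\chi}^{g}\rho\phi(T)Z\,dy-\tfrac12\big(\overline{\rho u v^{2}}\,A\big)_x ,
\]
which is exactly \eqref{eq-QO1} modulo the averaging discrepancies and the $v^{2}$-term.

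\textbf{Step 2 (the quadratic error).} Set $U=\bar U+\tilde U$, so $\int_\chi^g\tilde U\,dy=0$. Each flux component above is a smooth function of $(\rho,u,p,Z)$ near the background, so a second-order Taylor expansion about $\bar U$ gives $\overline{\Phi(U)}=\Phi(\bar U)+O\big(\|\tilde U\|_{L^{\infty}_{y}}^{2}\big)$, and the same for $\rho\phi(T)Z$ with $\bar T:=T(\bar\rho,\bar p)$; by \eqref{estimate-tv-u}, $\|\tilde U(x,\cdot)\|_{L^{\infty}_{y}}\le\mathrm{osc}_{y}U(x,\cdot)\le C\delta_*$, so these discrepancies are $O(\delta_*^{2})$. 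Moreover $\overline{\rho uv^{2}}=O(\delta_*^{2})$ because $v\equiv0$ in the background forces $\sup|v|\le C\delta_*$, and with $\tilde p(x,g)=p(x,g)-\bar p$, $\tilde p(x,\chi^{+})=p(x,\chi^{+})-\bar p$,
\[
g'p(x,g)-\chi'p(x,\chi^{+})=A'\bar p+\big(g'\tilde p(x,g)-\chi'\tilde p(x,\chi^{+})\big),
\]
whose remainder is $O(\delta_*^{2})$ since $|g'|\le T.V.\{g'\}\le\delta_*$, $|\chi'|\le C\delta_*$ (the contact slope equals $v/u$), and $|\tilde p|\le C\delta_*$. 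Hence $\bar U$ solves \eqref{eq-QO1} with $U_{A}$ replaced by $\bar U$ and $T_A$ by $\bar T$, up to a source $\mathcal E(x)$ built from the $O(\delta_*^{2})$ quantities just listed and from the $x$-derivatives of the $O(\delta_*^{2})$ variance terms.

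\textbf{Step 3 (comparison and main obstacle).} Since the flow is supersonic, the flux vector $\Phi=\big(\rho u,\ \rho u^{2}+p,\ \rho u(e+\tfrac12u^{2}+\tfrac p\rho),\ \rho uZ\big)$ is, near the background state, a smooth function of $(\rho,u,p,Z)$ with nonsingular Jacobian, so $|\bar U(x)-U_{A}(x)|$ is comparable to $|\Phi(\bar U(x))-\Phi(U_{A}(x))|$. Subtracting the exact averaged system from \eqref{eq-QO1}, multiplying by $A$ and integrating from $0$, the $x=0$ boundary terms cancel because $\bar U(0)=U_{A,0}$, leaving only the initial variance $\overline{\rho_{2}u_{2}}-\bar\rho_{0}\bar u_{0}=\overline{(\rho_{2}-\bar\rho_{0})(u_{2}-\bar u_{0})}=O\big((T.V.\{U_{2}\})^{2}\big)=O(\delta_*^{2})$ together with the accumulated source error; using the lower bound $A(x)\ge c_{0}>0$ inherited from the construction of Theorem \ref{theorem-exist}, the bound $T.V.\{A'\}\le C\delta_*$ (from $T.V.\{g'\}<\delta_0$ and $T.V.\{\chi'\}\le C\delta_*$, the latter from \eqref{estimate-tv-u}), the decay of the unburned fraction forced by the positive lower bound of $\phi(T)$ in (H3), and the uniform bounds on $\mathrm{osc}_{y}U$, $\sup|v|$ and the pressure traces furnished by Theorem \ref{theorem-exist}, together with the a priori estimate $|\bar U-U_A|\le C\delta_*$ (both states are $O(\delta_*)$-close to the background), one reduces everything to
\[
\big|\Phi(\bar U(x))-\Phi(U_{A}(x))\big|\le C\delta_*^{2}+C\int_{0}^{x}\omega(s)\,\big|\bar U(s)-U_{A}(s)\big|\,ds ,\qquad \int_{0}^{\infty}\omega(s)\,ds\le C ,
\]
and Gronwall's inequality gives $|\bar U(x)-U_{A}(x)|\le C\delta_*^{2}$ for all $x\ge0$. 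The genuinely delicate point is establishing this last display: one must verify that, after integration in $x$, each error term still contributes only $O(\delta_*^{2})$ \emph{uniformly} in $x$, which forces one to re-invoke the global (not merely local) Glimm-functional estimates of Theorem \ref{theorem-exist} — the uniform control of $\mathrm{osc}_{y}U$, of the wall and contact pressure traces, of $v$, of $\chi'$ and of $A,A'$ — and to check that the effective Gronwall weight $\omega$ is integrable, which is where the reaction damping of $Z$ and the finiteness of $T.V.\{g'\}$ enter. The algebraic identities of Steps 1–2 are, by contrast, routine.
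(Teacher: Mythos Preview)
Your outline captures the right algebraic structure, and Steps~1--2 are essentially the averaged identities the paper derives (there via the approximate solutions $U_{h,\theta}$ and a limit, which is needed because $U$ is only BV). The real problem is in Step~3, and you already flag it as ``the genuinely delicate point'' --- but you have not supplied the missing ingredient.

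The pointwise bound $g'\tilde p(x,g)-\chi'\tilde p(x,\chi^{+})=O(\delta_*^2)$ is correct, but to feed it into your Gronwall inequality you must integrate it over $[0,x]$ and still get $O(\delta_*^2)$ \emph{uniformly in $x$}. A pointwise $O(\delta_*^2)$ bound alone gives only $O(\delta_*^2 x)$. What you actually need is
\[
\int_{0}^{\infty}\!\big|p(s,g(s))-\bar p(s)\big|\,ds\;\le\;\int_{0}^{\infty}\!T.V.\{p(s,\cdot)\!\mid_{(\chi(s),g(s))}\}\,ds\;\le\;C\delta_*,
\]
and likewise for $v/u$. This $L^{1}_{x}$ control of the transverse oscillation is \emph{not} a consequence of Theorem~\ref{theorem-exist}: that theorem only bounds $T.V.\{U(x,\cdot)\}\le C\delta_0$ for each fixed $x$, which says nothing about integrability in $x$. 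In the paper this is a separate lemma (Lemma~\ref{IntTV}), proved by tracking $1$- and $5$-waves along generalized characteristics between the wall and the strong contact and summing the decrease of the Glimm interaction potential together with the exponential decay of $Z$; it is the technical heart of Section~\ref{sec:EE}. Your appeal to ``the global Glimm-functional estimates of Theorem~\ref{theorem-exist}'' does not cover it.

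The same gap reappears in your treatment of $A'$: you write $T.V.\{A'\}\le C\delta_*$ and cite $T.V.\{\chi'\}\le C\delta_*$ ``from \eqref{estimate-tv-u}'', but \eqref{estimate-tv-u} bounds the $y$-variation of $U(x,\cdot)$, not the $x$-variation of $\chi'$. More importantly, what the Gronwall argument and the very well-posedness of the quasi-one-dimensional model (Lemma~\ref{lemma-Quasi-One-Ex}) require is $\int_{0}^{\infty}|A'(s)|\,ds\le C\delta_*$, not $T.V.\{A'\}$. Since $A'=g'-\chi'=(v/u)|_{y=g}-(v/u)|_{y=\chi}$, this integral is again controlled by $\int_{0}^{\infty}T.V.\{(v/u)(s,\cdot)\}\,ds$, i.e.\ by Lemma~\ref{IntTV}. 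Without that lemma neither your integrable weight $\omega$ nor the hypothesis of Lemma~\ref{lemma-Quasi-One-Ex} is available, and the argument does not close.
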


Theorem \ref{theorem-Quasi-One} justifies the validation of the quasi-one-dimensional approximation of the supersonic exothermically reacting Euler flows if $\delta_*$ is sufficiently small, \emph{i.e.}, this theorem indicates that the difference between the integral average of the weak solution of \eqref{eq-reaction-1} and the solution  of \eqref{eq-QO1} can be bounded by the square of the total variation of both initial data and the slope of Lipschitz wall.

We develop a fractional-step Glimm scheme to construct the approximate solutions to establish the global existence of the entropy solution of the initial boundary value problem \eqref{eq-reaction-1} and \eqref{I}-\eqref{B}. To make it, we have to design a Glimm-type functional based on local estimates obtained in Section \ref{sec:LE}. The key estimates are the reflection coefficient in front of the strength of the reflected 5-wave when the weak 1-wave hits the strong contact discontinuity from above governed by the corresponding homogeneous system \eqref{eq-reaction-h} is strictly less than one, as well as the exponential decay estimate of the reactant $Z$ in the reacting step.  %We identify a Glimm-type functional that used in \cite{Glimm1965} (see also \cite{Dafermos2010,Smoller1994}) to obtain the required BV estimates.
With the Glimm-type functional in hand, we can show that the total variation of the approximate solutions is uniformly bounded and actually small, and then by the standard argument developed in \cite{Glimm1965} to show Theorem \ref{theorem-exist}.
We remark that although elegant results had been established for the existence of entropy solutions of hyperbolic balance laws in \cite{Christoforou2015,DafermosHsiao1982, Hong2007,YingWang1980-1}, system \eqref{eq-reaction-1} concerned in this paper does not satisfy the hypotheses there. In fact, the exothermic reaction can increase the total variation of the solutions. For example, the linearized stability analysis, as well as numerical and physical experiments, have shown that certain steady detonation waves are unstable \cite{Bourlioux1991,Erpenbeck1962,Fickett1966,Lee1990}. %In \cite{ChenWagner2003},  the large-time existence of entropy solutions to the Cauchy problem was established for the time-dependent exothermically reacting Euler equations in one dimension.
However, if assume that the reaction rate function $\phi(T)$ never vanishes, then the decay estimate of the reaction plays a key role in controlling the increasing of the total variation of solutions. %Consequently, the increase in total variation can be estimated rigorously. %Recently, similar ideas had been used to construct global entropy solutions with a shock or strong rarefaction wave in \cite{ChenXiaoZhang2014,ChenKuangZhang2015}) via a modified fractional-step scheme in \cite{ChenWagner2003,Amadori2002-2}. For further information on gas dynamic combustion theory, we refer the reader to \cite{Majda1981,Williams1985}.

Next, in order to show Theorem \ref{theorem-Quasi-One}, we need carefully to derive several estimates on error terms of different type to pass the limit $h\rightarrow0$ such that we can get the equations that the integral average of weak solutions with respect to $y$ satisfies. Then the validation of the quasi-one-dimensional approximation is rigorously justified by applying the decay estimates of the reactant $Z$ of both system \eqref{eq-reaction-1} and \eqref{eq-QO1}, and the smallness of the $B.V.$ bounds of solutions.
% of  the reactant $Z$ compare the two system \eqref{eq-reaction-2} and \eqref{eq-QO2}, we take the average of the weak solution with respect to $y$  between the boundary and strong contact discontinuity. Then the equations satisfied by the integral average of weak solution is deduced. By analyze the remainder terms in the equations, we can estimate the difference between $\bar{U}(x)$ and $U_A(x)$.

The importance of the problem of steady supersonic non-reacting Euler flow past a wedge has been introduced in Courant-Friedrichs' book \cite{Courant1999}. When the flow behind the shock is smooth, the existence and asymptotic behaviour had been extensively studied by many authors (for instance, see
\cite{Chen1998-3, Chen1998-2,Chen1998-1,Gu1962,Li1980,Schaeffer1976}). Next, for the non-piecewise smooth solutions, by developing a modified Glimm scheme or  wave-front tracking scheme, %used in \cite{Glimm1965,Liu1977Deterministic,Schochet1991,Amadori1997,Bressan2000,Lewicka2004} ,
global weak entropy solutions of the potential flow had been constructed in \cite{Zhang1999,Zhang2003,Zhang2011} when the wedge is a small perturbation of a straight wedge or a convex one. Later, global weak entropy solutions with a large shock or vortex sheet had been established for the full Euler equations in \cite{ChenZhangZhu2006,ChenZhangZhu2007}. Recently, global weak entropy solutions with transonic characteristic discontinuities had been obtained in \cite{ChenKukrejaYuan2013-1,KukrejaYuanZhao2015} when the steady supersonic non-reacting Euler flow past a convex corner surrounded by the static gas. Meanwhile, the quasi-one-dimensional approximation of isentropic or irrotational gas flow had been established in \cite{ChenGengZhang2009,GengZhang2009} by applying the Riemann semigroup via the wave-front tracking scheme (see \cite{Bressan2000,ChenXiangZhang2013} for more details of the techniques).

%For the reacting gas combustion model, Riemann problem in one dimension had been studied by many authors (for instance, see
%\cite{TengChorinLiu1982,YingTeng1984,ZhangZheng1989,LiZhang2002,ShengZhang2009}).
%Several results had been established regarding the existence of entropy solutions to hyperbolic systems of balance laws \cite{YingWang1980-1,DafermosHsiao1982, Hong2007,Christoforou2015}. However, the equations of reacting flow considered here do not satisfy the hypotheses for these results. In fact, the exothermic reaction can increase the total variation of the solutions. For example, linearized stability analysis, as well as numerical and physical experiments, have shown that certain steady detonation waves are unstable \cite{Erpenbeck1962,Fickett1966,Lee1990,Bourlioux1991}.
For the exothermically reacting Euler equations, the large-time existence of one-dimensional time-dependent entropy solutions of the Cauchy problem was established in \cite{ChenWagner2003}. %They assumed that the initial data are such that the reaction rate function $\phi(T)$ never vanishes. Thus, although the total variation of the solution may increase while the reaction is active, the reaction may eventually die out. Consequently, the increase in total variation can be estimated rigorously.
Recently, the global existence of steady weak entropy solutions with a strong shock or strong rarefaction wave is established in \cite{ChenKuangZhang2015,ChenXiaoZhang2014}. For further information on the reacting gas dynamic theory, we refer the reader to \cite{Majda1981,Williams1985}.

%Before concluding the section, we remark that isentropic or irrotational approximation of quasi-one-dimensional gas flow had been established in \cite{ChenGengZhang2009,GengZhang2009} via wave-front tracking scheme and standard Riemann semigroup (see \cite{Bressan2000,ChenXiangZhang2013}).  Also it would be interesting to clarify  the stability of  unsteady compressible elementary  waves in multidimensional supersonic exothermically reacting flow. For this, we refer the reader to \cite{CJLM2007}, and the references therein.

The rest of this paper is organised as follows. In Section \ref{sec:LE}, several important local estimates including local interaction estimates and local estimates on the reacting step are established. In Section \ref{sec:GE}, we introduce the fractional-step Glimm scheme to construct approximate solutions and introduce a modified Glimm-type functional to prove the global estimates of the approximated solutions in the non-reacting step and the reacting step separately. Then we complete the proof of Theorem \ref{theorem-exist} in Section \ref{sec:GE}. Finally, section \ref{sec:EE} is devoted to the proof of Theorem \ref{theorem-Quasi-One}.

\section{Local estimates of solutions of the steady exothermically reacting Euler equations}\label{sec:LE}
In this section, we will establish the local wave interaction estimates for the homogeneous system, and then the local estimates on the reacting step of the steady exothermically reacting Euler equations \eqref{eq-reaction-1}.

First, system \eqref{eq-reaction-1} can be written in the following form:
\begin{equation}\label{eq-reaction-2}
W(U)_x+H(U)_y=G(U),
\end{equation}
with $U=(u ,v, p, \rho, Z)^\top$, where
\begin{align}
&W(U)=(\rho u, \rho u^2+p, \rho uv, \rho u(\frac{u^2+v^2}{2}+\frac {\gamma p}{(\gamma -1)\rho}), \rho uZ)^\top,\nonumber\\
&H(U)=(\rho v, \rho uv, \rho v^2+p, \rho v(\frac{u^2+v^2}{2}+\frac {\gamma p}{(\gamma -1)\rho}), \rho vZ)^\top,\\
&G(U)=(0, 0, 0, q_0\rho Z\phi(T), -\rho\phi(T)Z)^\top.\nonumber
\end{align}

In the case when $G(U)$ is identically zero, \eqref{eq-reaction-2} becomes the homogeneous system
\begin{equation}\label{eq-reaction-h}
W(U)_x+H(U)_y=0.
\end{equation}

\subsection{Elementary wave curves of the homogeneous system of \eqref{eq-reaction-h}}\label{secA:1}
Before deriving the local estimates, we review certain basic properties of the homogeneous system of \eqref{eq-reaction-h} and the solvability of several typical Riemann problems that appear in the process of the fractional-step Glimm scheme.

First, we remark that in this paper, $M$ is a universal constant, depending only on the data and different at each occurrence, $O(1)$ is a quantity that is bounded by $M$, and $O_{\epsilon}(U)$ is a neighbourhood with radius $M\epsilon$ and center $U$.

If $u>c$, the homogeneous system \eqref{eq-reaction-h}
%\begin{equation}\label{eq-reaction-h-A}
%\begin{cases}
%(\rho u)_x+(\rho v)_y = 0,\\
%(\rho u^2+p)_x+(\rho uv)_y = 0,\\
%(\rho uv)_x+(\rho v^2+p)_y = 0,\\
%\big((\rho E+p)u\big)_x+\big((\rho E+p)v\big)_y =0,\\
%(\rho uZ)_x+(\rho vZ)_y = 0,
%\end{cases}
%\end{equation}
 has five real eigenvalues in the $x$-direction, which are
\[
\lambda_i=\frac{uv+(-1)^{\frac{i+3}{4}} c\sqrt{u^2+v^2-c^2}}{u^2-c^2},\,\,i=1,5,\qquad
\lambda _j=\frac{v}{u},\,\, j=2,3,4.
\]
The associated linearly independent right eigenvectors are
\begin{align}
&r_i=\kappa_i(-\lambda_i,1,\rho(\lambda_i u-v), \frac{\rho (\lambda_i u-v)}{c^2},0)^\top, \quad i=1,5;\\
&r_2=(u,v,0,0,0)^\top, \quad r_3=(0,0,0,\rho,0)^\top, \quad r_4=(0,0,0,0,1)^\top,
\end{align}
where $\kappa_i$ are chosen so that $r_i\cdot\nabla \lambda _i=1$ since the $i$th-characteristic fields are genuinely nonlinear for $i=1,5$. It is easy to see that $r_j\cdot\nabla \lambda _j=0,j=2,3,4$, which means these characteristic fields are linearly degenerate. By the straightforward calculation, we have the following lemma about the value of $\kappa_i$.
\begin{lemma}\label{A.1}
At the constant state ${U}_k^{(0)}=({u}_k^{(0)},0,{p}_k^{(0)},{\rho}_k^{(0)},0)$ with ${u}_k^{(0)}>{c}_k^{(0)}>0,k=1,2$,
\[
\kappa_1({U}_k^{(0)})=\kappa_5({U}_k^{(0)})=1/(\nabla_{U}\lambda_i\cdot(-\lambda_i,1,\rho u\lambda_i, {\rho u\lambda_i}/{c^2},0)|_{U={U}_k^{(0)}})>0,\quad i=1,5.
\]
It implies that $\kappa_i(U)>0$ for any $U\in O_{\epsilon}({U}_k^{(0)})$ since $\kappa_i(U)$ are continuous for $i=1,5$.
\end{lemma}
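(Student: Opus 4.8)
The plan is a direct verification followed by a short continuity argument. Recall that by construction $r_i=\kappa_i\tilde r_i$ with $\tilde r_i=(-\lambda_i,\,1,\,\rho(\lambda_i u-v),\,\frac{\rho(\lambda_i u-v)}{c^2},\,0)^\top$, and $\kappa_i$ is fixed by the normalization $r_i\cdot\nabla\lambda_i=1$. At the background state $U_k^{(0)}$ one has $v=0$, so $\tilde r_i$ collapses to $(-\lambda_i,1,\rho u\lambda_i,\frac{\rho u\lambda_i}{c^2},0)^\top$ and the normalization condition reads
\[
\kappa_i(U_k^{(0)})\Big(\nabla_U\lambda_i\cdot(-\lambda_i,1,\rho u\lambda_i,\tfrac{\rho u\lambda_i}{c^2},0)\big|_{U=U_k^{(0)}}\Big)=1 ,
\]
which is exactly the displayed formula. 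Hence the only thing to be proved is that the denominator
\[
D_i:=\nabla_U\lambda_i\cdot\big(-\lambda_i,1,\rho u\lambda_i,\tfrac{\rho u\lambda_i}{c^2},0\big)\Big|_{U=U_k^{(0)}}
\]
is strictly positive for $i=1,5$ (the equality $\kappa_1(U_k^{(0)})=\kappa_5(U_k^{(0)})$ will then come out of the computation automatically).

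To do this I would compute $D_i$ explicitly, treating $c=\sqrt{\gamma p/\rho}$ as a function of $(p,\rho)$ and writing $\lambda_i=\frac{uv+\epsilon_i c\sqrt{u^2+v^2-c^2}}{u^2-c^2}$ with $\epsilon_1=-1$, $\epsilon_5=1$. Set $m:=u^2-c^2$, which is positive at $U_k^{(0)}$ precisely because $u_k^{(0)}>c_k^{(0)}>0$, and note $\lambda_i|_{U_k^{(0)}}=\epsilon_i c/\sqrt m\neq0$. Differentiating and then putting $v=0$ gives the compact relations
\[
\partial_u\lambda_i=-\frac{u\lambda_i}{m},\quad
\partial_v\lambda_i=\frac{u}{m},\quad
\partial_p\lambda_i=\frac{\gamma u^2\lambda_i}{2c^2\rho\,m},\quad
\partial_\rho\lambda_i=-\frac{u^2\lambda_i}{2\rho\,m},\quad
\partial_Z\lambda_i=0 ,
\]
all evaluated at $U_k^{(0)}$. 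Substituting these into $D_i$, the four nonzero contributions do not cancel but combine into
\[
D_i=\frac{u}{m}\Big(1+\lambda_i^2+\frac{(\gamma-1)u^2\lambda_i^2}{2c^2}\Big)\Big|_{U=U_k^{(0)}} ,
\]
a sum of strictly positive quantities since $u>0$, $m>0$, $\gamma>1$; indeed $D_i\ge u/m>0$ already from the $\partial_v\lambda_i$ term alone. Thus $\kappa_i(U_k^{(0)})=1/D_i>0$, and since $D_i$ depends on $i$ only through $\lambda_i^2$, with $\lambda_1^2=\lambda_5^2=c^2/m$ at $U_k^{(0)}$, the two values coincide.

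For the last assertion, observe that on the open set $\{u>c>0\}$ the function $\lambda_i$ and its first-order partial derivatives depend smoothly on $U$, so $D_i=D_i(U)$ is continuous there and $D_i(U_k^{(0)})>0$; hence $D_i(U)>0$, and $\kappa_i(U)=1/D_i(U)$ is well defined and positive, for every $U$ in a sufficiently small neighborhood $O_\epsilon(U_k^{(0)})$. The only delicate point in the whole argument is the bookkeeping in the second step: one must differentiate $\lambda_i$ correctly through the composite dependence $c=c(p,\rho)$ and check that the simplification really yields a cancellation-free, manifestly positive expression rather than something of indefinite sign. This is where I would be most careful, although it remains a routine calculation and I would present only the final simplified form of $D_i$ rather than every intermediate derivative.
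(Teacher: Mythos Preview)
Your proposal is correct and is precisely the ``straightforward calculation'' the paper alludes to but omits. The paper gives no proof of this lemma beyond that phrase; your explicit computation of the partial derivatives of $\lambda_i$ at $v=0$, the resulting formula $D_i=\tfrac{u}{m}\big(1+\lambda_i^2+\tfrac{(\gamma-1)u^2\lambda_i^2}{2c^2}\big)$, the observation that this depends on $i$ only through $\lambda_i^2=c^2/m$, and the continuity argument are all accurate and constitute a complete verification.
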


Next, we will consider wave curves for $u>c$ in the phase space, especially in the neighborhood of ${U}_1^{(0)}$ and ${U}_2^{(0)}$. At each state $U_a=(u_a,v_a, p_a, \rho_a, Z_a)^\top$ with $u_a>c_a=\sqrt{\gamma p_a/\rho_a}$, there are five wave curves in the phase space through $U_a$.

The $j$th-contact discontinuity wave curve $C_j(U_a)$ for $j=2,3,4$, are
\[
C_j(U_a):\,\, dp=0,\qquad  vdu-udv=0.
\]
More precisely, by solving the following ODE problem
\[
\begin{cases}
\frac{dU}{d\sigma_j}=r_j(U),\, \, j=2,3,4,\\
U|_{\sigma_j=0}=U_a,
\end{cases}
\]
 we easily have that
\begin{align}
&C_2(U_a):\,\, U=(u_ae^{\sigma_2},v_ae^{\sigma_2},p_a,\rho_a,Z_a)^\top, \label{c1}\\
&C_3(U_a):\,\, U=(u_a,v_a,p_a,\rho_ae^{\sigma_3},Z_a)^\top,\label{c2}\\
&C_4(U_a):\,\, U=(u_a,v_a,p_a,\rho_a,Z_a+\sigma_4)^\top. \label{c3}
\end{align}
%which is determined by
%\[
%\begin{cases}
%\frac{dU}{d\sigma_j}=r_j(U),\, \, j=2,3,4,\\
%U|_{\sigma_j=0}=U_a.
%\end{cases}
%\]

The $i$th-rarefaction wave curve $R_i(U_a),i=1,5$,
\begin{equation}\label{rarefaction}
R_i(U_a):\,\, dp=c^2d\rho,\,\, du=-\lambda_idv,\,\, \rho(\lambda_iu-v)dv=dp,\,\, dZ=0.
\end{equation}

The $i$th-shock wave curve $S_i(U_a),i=1,5$,
\begin{equation}\label{shock}
S_i(U_a):\, \, [p]=\frac{\hat{c}_a^2}{\hat{\gamma}}[\rho],\,\, [u]=-s_i[v], \,\, \rho_a(s_iu_a-v_a)[v]=[p],\,\, [Z]=0.
\end{equation}
where $[\cdot]$ stands for the jump of a quantity across the shock, the slope of the discontinuity
\[
s_i=\frac{u_a v_a+(-1)^{\frac{i+3}{4}}\hat{c}_a\sqrt{u_a^2+v_a^2-\hat{c}_a^2}}{u_a^2-\hat{c}_a^2},
\]
and $\hat{c}_a^2=\frac{\rho c_a^2}{\hat{\gamma}\rho_a},\hat{\gamma}=\frac{\gamma+1}{2}-\frac{(\gamma-1)\rho}{2\rho_a}$.

Following the ideas in \cite{Lax1957}, in a neighbourhood of ${U}_k^{(0)},k=1,2$, we can parameterize any physically admissible wave curves above  by
\begin{equation}
\alpha_i \mapsto \Phi_i(\alpha_i;U_a),
\end{equation}
with $\Phi_i \in C^2$, $\Phi_i|_{\alpha_i=0}=U_a$, and $\frac{\partial \Phi_i}{\partial \alpha_i}|_{\alpha_i=0}=r_i(U_a)$. For $i= 1,5$, the case $\alpha_i>0$ corresponds to a rarefaction wave, while the case $\alpha_i<0$ corresponds to a shock wave.
Moreover, $\Phi_2,\Phi_3$,$\Phi_4$ can be given with three independent parameters ($\sigma_2,\sigma_3,\alpha_4$) as
\begin{align}
&\Phi_2(\sigma_2;U_a)=(u_{a}e^{\sigma_2},v_{a}e^{\sigma_2},p_a,\rho_a,Z_a),\\
&\Phi_3(\sigma_3;U_a)=(u_a,v_a,p_a,\rho_ae^{\sigma_3},Z_a),\\
&\Phi_4(\alpha_4;U_a)=(u_a,v_a,p_a,\rho_a,Z_a+\alpha_4).\label{c3-1}
\end{align}
In particular, it holds that
\[
{U}_2^{(0)}=({u}_2^{(0)},0,{p}_2^{(0)},{\rho}_2^{(0)},0)^\top=({u}_1^{(0)} e^{\sigma_{20}},0,{p}_1^{(0)},{\rho}_1^{(0)} e^{\sigma_{30}},0)^\top.
\]

\subsection{Local interaction estimates}
Let us consider the local wave interaction estimates for the homogeneous system \eqref{eq-reaction-h} first, which include the weak wave interactions, weak wave reflections on the boundary and the interaction between the strong contact discontinuity and weak waves.

%Following the ideas from \cite{Lax1957}, in a neighborhood of $\bar{U}_k,k=1,2$, we can parameterize any physically admissible wave curve in Appendix A  by
%\[
%\alpha_i \mapsto \Phi_i(\alpha_i;U_a),
%\]
%with $\Phi_i \in C^2$, $\Phi_i|_{\alpha_i=0}=U_a$, and $\frac{\partial \Phi_i}{\partial \alpha_i}|_{\alpha_i=0}=r_i(U_a)$. For $i= 1,5$, the case $\alpha_i>0$ corresponds to a rarefaction wave, while the case $\alpha_i<0$ corresponds to a shock wave.
%
% Notice that $\Phi_2,\Phi_3$,$\Phi_4$ are given by \eqref{c1}-\eqref{c3} in Appendix A with three independent parameters ($\sigma_2,\sigma_3,\alpha_4$) :
%\begin{align}
%&\Phi_2(\sigma_2;U_a)=(u_{a}e^{\sigma_2},v_{a}e^{\sigma_2},p_a,\rho_a,Z_a),\\
%&\Phi_3(\sigma_3;U_a)=(u_a,v_a,p_a,\rho_ae^{\sigma_3},Z_a),\\
%&\Phi_4(\alpha_4;U_a)=(u_a,v_a,p_a,\rho_a,Z_a+\alpha_4).\label{c3-1}
%\end{align}
%In particular, it holds that
%\[
%\bar{U}_2=(\bar{u}_2,0,\bar{p}_2,\bar{\rho}_2,0)^\top=(\bar{u}_1 e^{\sigma_{20}},0,\bar{p}_1,\bar{\rho}_1 e^{\sigma_{30}},0)^\top.
%\]

First, let us consider the Riemann problem only involving weak waves for \eqref{eq-reaction-h}:
\begin{equation}\label{initial-r}
U|_{x=x_0}=
\begin{cases}
U_{b}=(u_b,v_b, p_b, \rho_b, Z_b)^\top,  & y>y_0,\\
U_{a}=(u_a,v_a, p_a, \rho_a, Z_a)^\top,  & y<y_0,\\
\end{cases}
\end{equation}
where the constant states $U_a$ and $U_b$ are the below state and above state with respect to the line $y=y_0$, respectively.

Let $\tilde{\Phi}_i,i=1,2,3,5$ be the vector which only contains the first four components of $\Phi_i$, where $\Phi_i$ are defined in Section \ref{secA:1}. For the simplicity, we set
\[
\tilde{\Phi}(\alpha_5,\alpha_3,\alpha_2,\alpha_1;V_a)
=\tilde{\Phi}_5(\alpha_5;\tilde{\Phi}_3(\alpha_3;\tilde{\Phi}_2(\alpha_2;
\tilde{\Phi}_1(\alpha_1;V_a))))),
\]
with $V_a=(u_a,v_a,p_a,\rho_a)^\top,$  and $\mathcal{F}(\sigma_3,\sigma_2;V_a)=\tilde{\Phi}_3(\sigma_3;\tilde{\Phi}_2(\sigma_2;V_a))
=(u_a e^{\sigma_2},v_a e^{\sigma_2},p_a,\rho_a e^{\sigma_3})^\top$ for any  $V_a\in O_{\epsilon}({V_1}^{(0)})$ with ${V_1}^{(0)}=({u}_1^{(0)},0,{p}_1^{(0)},{\rho}_1^{(0)})^\top$.

Following the argument in \cite{Lax1957}, we easily have the following lemma.

\begin{lemma}\label{lemma-R1}
There exist positive constants $\epsilon$ and $ C$, such that for any states $U_a,U_b\in O_{\epsilon}({U}_k^{(0)}),k=1,2$, the Rieman problem \eqref{initial-r} admits a unique admissible solution of five elementary waves. In addition, the state $U_b$ can be represented by
\begin{equation}\label{probelm-R1}
\begin{cases}
&V_b=\tilde{\Phi}(\alpha_5,\alpha_3,\alpha_2,\alpha_1;V_a),\\
&Z_b=Z_a+\alpha_4,
\end{cases}
\end{equation}
with $V_b=(u_b,v_b, p_b, \rho_b)^\top$. Furthermore, it holds that $|U_b-U_a|\leq C\sum\limits_{i=1}^{5}|\alpha_i|$.
\end{lemma}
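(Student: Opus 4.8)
The statement to prove is Lemma~\ref{lemma-R1}: local solvability of the Riemann problem \eqref{initial-r} for the homogeneous system \eqref{eq-reaction-h} with only weak waves, in a neighborhood of the background states ${U}_k^{(0)}$, together with the parametrization \eqref{probelm-R1} and the linear bound $|U_b-U_a|\le C\sum|\alpha_i|$.

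My plan:

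The approach is the classical Lax construction via the implicit function theorem. First I would observe that the $Z$-component decouples: since $Z$ is advected and all of $r_1,\dots,r_4$ preserve $Z$ except $r_4$ (the linearly degenerate $4$-field), the jump in $Z$ is entirely carried by the $\alpha_4$-wave, so it suffices to solve the $4\times 4$ reduced system in $V=(u,v,p,\rho)^\top$ and then set $Z_b=Z_a+\alpha_4$; this is exactly why the statement splits as in \eqref{probelm-R1}. Second, I would recall from Section~\ref{secA:1} that each reduced wave curve $\tilde\Phi_i(\alpha_i;\cdot)$ is $C^2$, passes through the base point at $\alpha_i=0$, and has tangent $r_i(V_a)$ there; in particular the $3$-contact curve \eqref{c2} in the reduced variables is trivial (it only moves $\rho$, which is one of the four reduced components, so I keep it). Genuine nonlinearity of the $1$- and $5$-fields (guaranteed by $u>c$ and Lemma~\ref{A.1}, which gives $\kappa_i>0$) ensures that the composite shock-rarefaction curves glue $C^2$ at $\alpha_i=0$.

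The core step is to define the composite map
\[
\mathcal{T}(\alpha_1,\alpha_2,\alpha_3,\alpha_5;V_a)=\tilde\Phi_5\big(\alpha_5;\tilde\Phi_3\big(\alpha_3;\tilde\Phi_2\big(\alpha_2;\tilde\Phi_1(\alpha_1;V_a)\big)\big)\big)
\]
and show that $\alpha\mapsto\mathcal{T}(\alpha;V_a)$ is a local $C^2$ diffeomorphism from a neighborhood of $0$ in $\mathbb{R}^4$ onto a neighborhood of $V_a$ in the reduced phase space. The key computation is that the Jacobian $\partial\mathcal{T}/\partial(\alpha_1,\alpha_2,\alpha_3,\alpha_5)$ at $\alpha=0$ equals the matrix with columns $r_1(V_a),r_2(V_a),r_3(V_a),r_5(V_a)$ (the reduced eigenvectors), which is nonsingular because $\{r_1,r_2,r_3,r_5\}$ are linearly independent at every state with $u>c$ — this is where strict hyperbolicity enters. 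A compactness argument over $V_a\in\overline{O_\epsilon({U}_k^{(0)})}$ for $k=1,2$ then gives uniform $\epsilon$ and $C$ independent of $V_a$. Applying the implicit function theorem to $\mathcal{T}(\alpha;V_a)=V_b$ yields the unique $\alpha=(\alpha_1,\alpha_2,\alpha_3,\alpha_5)$, hence \eqref{probelm-R1}; admissibility (Lax entropy condition) on the $1$- and $5$-waves follows from the sign conventions $\alpha_i<0\Leftrightarrow$ shock already built into the parametrization of $\Phi_i$ in Section~\ref{secA:1}.

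Finally, the bound $|U_b-U_a|\le C\sum_{i=1}^5|\alpha_i|$ comes from the mean value theorem applied along the composite path: each leg $\tilde\Phi_i(\alpha_i;\cdot)$ is $C^2$ with bounded derivative on the compact neighborhood, so the total displacement in $V$ is $O(1)\sum_{i\ne4}|\alpha_i|$, and the displacement in $Z$ is exactly $|\alpha_4|$; adding these gives the claim. I expect no serious obstacle here — this is a textbook Lax-type argument — but the one point requiring care is verifying that the two distinct background states ${U}_1^{(0)}$ and ${U}_2^{(0)}$ (which differ by a contact discontinuity, cf. the relation ${U}_2^{(0)}=({u}_1^{(0)}e^{\sigma_{20}},0,{p}_1^{(0)},{\rho}_1^{(0)}e^{\sigma_{30}},0)^\top$) both admit a common choice of $\epsilon$ and $C$; this is handled by simply taking the smaller $\epsilon$ and larger $C$ of the two, using continuity of the eigenvectors and eigenvalues in $U$.
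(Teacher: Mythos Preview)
Your proposal is correct and is precisely the classical Lax construction that the paper invokes: the paper does not give a proof of this lemma but simply states that it follows from the argument in \cite{Lax1957}, which is exactly the implicit function theorem applied to the composite wave map with Jacobian given by the (linearly independent) eigenvectors, together with the decoupling of the $Z$-component via the $4$-contact field. Your additional remarks on uniformity of $\epsilon,C$ over the two background states and on the mean value bound are sound and make explicit what the paper leaves implicit.
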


Moreover, the Glimm interaction estimates theorem (see \cite{Dafermos2010,Glimm1965,Smoller1994}) implies the following local weak wave interaction estimates.
\begin{proposition}\label{prop-in-w}
Suppose that three states $U_a, U_m$, and $U_b\in O_{\epsilon}({U}_k^{(0)}),k=1,2$, satisfy that
\begin{gather*}
V_b=\tilde{\Phi}(\gamma_5,\gamma_3,\gamma_2,\gamma_1;V_a),\quad Z_b=Z_a+\gamma_4,\\
V_b=\tilde{\Phi}(\beta_5,\beta_3,\beta_2,\beta_1;V_m),\quad Z_b=Z_m+\beta_4,\\
V_m=\tilde{\Phi}(\alpha_5,\alpha_3,\alpha_2,\alpha_1;V_a),\quad Z_m=Z_a+\alpha_4.
\end{gather*}
$($see Fig. $\mathrm{\ref{3}})$. Then it holds that
\begin{equation}\label{estimate-in-w}
\begin{cases}
&\gamma_i=\alpha_i+\beta_i+O(1)\Delta(\boldsymbol{\alpha}^{*},\boldsymbol{\beta}^{*}),\,\,i=1,2,3,5,\\
&\gamma_4=\alpha_4+\beta_4,
\end{cases}
\end{equation}
where
$\Delta(\boldsymbol{\alpha}^{*},\boldsymbol{\beta}^{*})=|\alpha_5|(|\beta_1|+|\beta_2|+|\beta_3|)
+|\beta_1|(|\alpha_2|+|\alpha_3|)
+\sum\limits_{j=1,5}\Delta_j(\alpha_{j},\beta_{j})$ with
\[
\Delta_j(\alpha_{j},\beta_{j})=
\left\{
\begin{array}{ll}
0,&\quad \mbox{$\alpha_j \ge 0$ , $\beta_j \ge 0$},\\
|\alpha_j||\beta_j|,&\quad \mbox{otherwise.}
\end{array}
\right.
\]
\end{proposition}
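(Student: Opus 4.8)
The plan is to prove Proposition \ref{prop-in-w} by reducing it to the classical Glimm interaction estimate for homogeneous hyperbolic conservation laws, applied to the $4\times 4$ system satisfied by the variables $V = (u,v,p,\rho)^\top$, and then handling the decoupled $Z$-component by an elementary direct computation. The starting observation is that in the homogeneous system \eqref{eq-reaction-h}, the equation for $Z$ is $(\rho u Z)_x + (\rho v Z)_y = 0$, which combined with the first equation $(\rho u)_x + (\rho v)_y = 0$ shows that $Z$ is transported along the particle trajectories (the 2,3,4-characteristic directions $v/u$) and is unaffected by the 1- and 5-waves. Hence the first four components $V$ satisfy a self-contained $4\times 4$ strictly hyperbolic system with eigenvalues $\lambda_1 < \lambda_2=\lambda_3 = v/u < \lambda_5$ (note $\lambda_2,\lambda_3$ coincide but the fields are linearly degenerate, so the system is still amenable to Lax/Glimm theory in the form for systems with coinciding linearly degenerate eigenvalues), and the 4-field ($Z$) is a passive scalar linearly degenerate field whose eigenvalue also equals $v/u$.

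First I would record that, by Lemma \ref{lemma-R1}, each Riemann problem in the statement is solved and the wave strengths $\boldsymbol\alpha,\boldsymbol\beta,\boldsymbol\gamma$ are well defined, with $|U_b-U_a|\le C\sum|\alpha_i|$ and similarly for the other pairs; in particular all strengths are $O(\epsilon)$. Then I would apply the standard Glimm interaction estimate (as in \cite{Glimm1965,Smoller1994,Dafermos2010}) to the $4\times 4$ system for $V$: when the wave fan $(\alpha_1,\alpha_2,\alpha_3,\alpha_5)$ from $U_a$ to $U_m$ interacts with the fan $(\beta_1,\beta_2,\beta_3,\beta_5)$ from $U_m$ to $U_b$, the resulting fan $(\gamma_1,\gamma_2,\gamma_3,\gamma_5)$ from $U_a$ to $U_b$ satisfies $\gamma_i = \alpha_i + \beta_i + O(1)\,\mathrm{(quadratic\ interaction\ terms)}$. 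The precise quadratic form is dictated by which pairs of waves are genuinely "approaching": since only the 1- and 5-fields are genuinely nonlinear and since the 1-waves lie to the left of the 2,3-waves which lie to the left of the 5-waves, the approaching pairs in this geometry (lower fan below, upper fan above, with $\alpha$ below $\beta$) are exactly: every $\alpha_5$ against every slower $\beta$-wave ($\beta_1,\beta_2,\beta_3$), every $\beta_1$ against every faster $\alpha$-wave ($\alpha_2,\alpha_3$), plus same-family pairs $\alpha_j,\beta_j$ for $j=1,5$ which contribute only when at least one is a shock ($\Delta_j$ as defined). This reproduces exactly the claimed $\Delta(\boldsymbol\alpha^*,\boldsymbol\beta^*)$.

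It then remains to treat $\gamma_4$. Here I would argue directly rather than invoke the general theorem, since the $Z$-component is genuinely exactly additive with no error: from \eqref{probelm-R1} / \eqref{c3-1}, crossing the 4-wave changes $Z$ by exactly $\alpha_4$ and crossing any other wave leaves $Z$ unchanged; thus $Z_m = Z_a + \alpha_4$, $Z_b = Z_m + \beta_4 = Z_a + \alpha_4 + \beta_4$, while also $Z_b = Z_a + \gamma_4$, forcing $\gamma_4 = \alpha_4 + \beta_4$ with no quadratic term. This uses only that in the homogeneous system the $Z$-equation decouples as a linear transport equation along particle paths.

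The main obstacle — really the only nontrivial point — is the presence of the coinciding eigenvalues $\lambda_2 = \lambda_3 = \lambda_4 = v/u$ for the three linearly degenerate fields, which means the $V$-system is not strictly hyperbolic in the usual sense and the off-the-shelf Glimm estimate must be invoked in its refined form allowing a linearly degenerate eigenvalue of multiplicity greater than one (or equivalently, one treats the 2- and 3-contact discontinuities as a single linearly degenerate "bundle"). I would address this by noting that the wave curves $C_2, C_3$ are explicitly integrated in \eqref{c1}–\eqref{c2} and commute in a controlled way (composition of two linearly degenerate families sharing the same speed), so that the bundle behaves exactly like a single linearly degenerate characteristic family: waves within it never interact quadratically with each other, they only get shifted by quadratic amounts when crossed by the genuinely nonlinear 1- and 5-waves of the opposing fan. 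Once this is observed, the interaction estimate is precisely of Glimm's form and the stated $\Delta(\boldsymbol\alpha^*,\boldsymbol\beta^*)$ follows. I expect the write-up of this step to lean on the explicit formulas of Section \ref{secA:1} together with a citation to the interaction lemma in \cite{Dafermos2010} or \cite{Smoller1994}, rather than on a from-scratch computation.
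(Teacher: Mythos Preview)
Your proposal is correct and takes essentially the same approach as the paper: the paper does not give a proof of this proposition at all, but simply states that it follows from the classical Glimm interaction estimates theorem (citing \cite{Dafermos2010,Glimm1965,Smoller1994}). Your write-up is in fact more detailed than the paper's, since you explicitly justify the decoupling of the $Z$-component (yielding $\gamma_4=\alpha_4+\beta_4$ exactly) and address the repeated linearly degenerate eigenvalue $\lambda_2=\lambda_3=v/u$, neither of which the paper comments on.
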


\begin{figure}[ht]
\begin{minipage}[t]{0.4\textwidth}
\centering
\setlength{\unitlength}{1mm}
\begin{picture}(50,72)(0,2)
\linethickness{1pt}
%\multiput(0,0)(0,3){24}{\line(0,2){2}}
%\multiput(24,0)(0,3){24}{\line(0,2){2}}
%\multiput(48,0)(0,3){24}{\line(0,2){2}}
\put(0,0){\line(0,1){72}}
\put(24,0){\line(0,1){72}}
\put(48,0){\line(0,1){72}}
\put(0,54){\line(3,2){20}}
\put(0,54){\line(3,-1){20}}
\put(0,54){\line(5,2){20}}
\put(0,18){\line(1,1){20}}
\put(0,18){\line(5,3){20}}
\put(0,18){\line(4,-1){20}}
\put(24,40){\line(5,-2){22}}
\put(24,40){\line(3,2){22}}
\put(24,40){\line(5,1){22}}
\put(3,65){$U_b$}
\put(3,40){$U_m$}
\put(3,5){$U_a$}
\put(32,28){$U_a$}
\put(32,55){$U_b$}
\put(12,66){$\beta_5$}
\put(12,56){$\beta_{2(3,4)}$}
\put(12,46){$\beta_1$}
\put(12,36){$\alpha_5$}
\put(12,23){$\alpha_{2(3,4)}$}
\put(12,11){$\alpha_1$}
\put(40,54){$\gamma_5$}
\put(37,40){$\gamma_{2(3,4)}$}
\put(40,30){$\gamma_1$}
\end{picture}
\caption{Weak wave interactions.}\label{3}
\end{minipage}%
\begin{minipage}[t]{0.55\textwidth}
\centering
\setlength{\unitlength}{1mm}
\begin{picture}
(50,64)(0,2)
\linethickness{1pt}
\put(0,62){\line(0,-1){60}}
\put(0,18){\line(4,1){20}}
\put(0,18){\line(3,2){20}}
\put(0,62){\line(1,-1){18}}
\put(0,62){\line(5,-2){32}}
\put(24,52.3){\line(0,-1){50}}
\put(48,2){\line(0,1){56.3}}
\put(24,52.3){\line(5,-3){18}}
\put(24,52.3){\line(4,1){24}}
\put(36,55){\vector(-1,3){3}}
\put(3,60.9){\line(5,2){4}}
\put(7,59.2){\line(5,2){4}}
\put(11,57.6){\line(5,2){4}}
\put(15,56){\line(5,2){4}}
\put(19,54.4){\line(5,2){4}}
\put(11,57.6){\vector(1,2){4}}
\put(28,53.3){\line(1,2){2}}
\put(32,54.3){\line(1,2){2}}
\put(36,55.3){\line(1,2){2}}
\put(40,56.3){\line(1,2){2}}
\put(44,57.3){\line(1,2){2}}
\put(-1,65){$\Gamma_{k-1}$}
\put(49,60){$\Gamma_{k}$}
\put(11,68){$\textbf{n}_{k-1}$}
\put(33,65){$\textbf{n}_{k}$}
\put(10,53){$U_{k-1}$}
\put(38,49){$U_{k}$}
\put(23,54){$P_{k}$}
\put(2,34){$U_m$}
\put(2,10){$U_a$}
\put(36,20){$U_a$}
\put(29,51){$\omega_{k}$}
\put(10,4){$\Omega_{k-1,h}$}
\put(36,4){$\Omega_{k,h}$}
\put(36,40){$\gamma_1$}
\put(12,43){$\beta_1$}
\put(12,32){$\alpha_5$}
\put(12,18){$\alpha_{2(3,4)}$}
\end{picture}
\caption{Weak wave ~~~~~reflections on the boundary.}\label{4}
\end{minipage}
\end{figure}

Next, we consider the reflections and interactions of the waves near the boundary. Denote by $\{P_k\}_{k=0}^{\infty}$ the points $\{(x_k,y_k)\}_{k=0}^{\infty}$ in the $x$-$y$ plane with $x_k:=kh$ and $y_k:=g(kh)$. Set
\begin{align}\label{A}
&\omega_{k,k+1}=\arctan\big(\frac{y_{k+1}-y_k}{x_{k+1}-x_k}\big),
\quad \omega_k=\omega_{k,k+1}-\omega_{k-1,k},\quad \omega_{-1,0}=0,\nonumber\\
&g_{k,h}(x)=y_k+(x-x_k)\tan(\omega_{k,k+1}),\quad x\in [x_k,x_{k+1}),\\
&\Omega_{k,h}=\{(x,y):x\in [x_k,x_{k+1}), y<g_{k,h}(x)\},\quad \Omega_{h}=\bigcup_{k\ge 0}\Omega_{k,h},\nonumber\\
&\Gamma_{k}=\{(x,y):x\in [x_k,x_{k+1}), y=g_{k,h}(x)\}.\nonumber
\end{align}

Let $\textbf{n}_{k}$ be the outer normal vector to $\Gamma_{k}$, \emph{i.e.},
\begin{equation}\label{N}
\textbf{n}_{k}=\frac{(-y_{k+1}+y_k,x_{k+1}-x_k)}{\sqrt{(y_{k+1}-y_k)^2+(x_{k+1}-x_k)^2}}=(-\sin (\omega_{k,k+1}),\cos(\omega_{k,k+1})).
\end{equation}

Now, we consider the Riemann problem for \eqref{eq-reaction-h} with boundary,
\begin{eqnarray}\label{eq-br}
\left\{
\begin{array}{ll}
W(U)_x+H(U)_y=0,  & \text{in $\Omega_{k,h}$},\\
U|_{\{x=kh\}}=U_a, & \\
(u,v)\cdot \textbf{n}_k=0 & \text{on $\Gamma_{k}$},
\end{array}
\right.
\end{eqnarray}
where $U_a$ is a constant state (see Fig. \ref{4}).

For small angle $\omega_{k,k+1}$, we have the following for the solvability of the boundary Riemann problem \eqref{eq-br}.
\begin{lemma}\label{lemma-R2}
There exists $\epsilon>0$ such that, for $U_a\in O_{\epsilon}({U}_2^{(0)})$ and $|\omega_{k,k+1}|<\epsilon$, there is only one admissible solution,
consisting of a 1-wave with strength $\gamma_1$, that solves the boundary value problem \eqref{eq-br}. It also holds that
\begin{equation}\label{probelm-R2}
\gamma_1=K_b\omega_{k,k+1}+O(1)(|\omega_{k,k+1}|^2+|U_a-{U}_2^{(0)}|),
\end{equation}
with the constant $K_b>0$.

%\begin{figure}[ht]
%\begin{center}
%\setlength{\unitlength}{1mm}
%\begin{picture}(145,35)(6,4)
%\linethickness{1pt}
%\thicklines
%\put(10,24){\line(1,0){20}}
%\multiput(30,24)(3,0){10}{\line(1,0){2}}
%\put(60,24){\vector(1,0){4}}
%\put(30,24){\line(3,-1){30}}
%\put(95,24){\line(1,0){20}}
%\multiput(115,24)(3,0){10}{\line(1,0){2}}
%\put(145,24){\vector(1,0){4}}
%\put(115,24){\line(3,1){30}}
%\thinlines
%\put(10,24){\line(1,2){2}}
%\put(14,24){\line(1,2){2}}
%\put(18,24){\line(1,2){2}}
%\put(22,24){\line(1,2){2}}
%\put(26,24){\line(1,2){2}}
%\put(30,24){\line(1,2){2}}
%\put(34,22.8){\line(1,1){4}}
%\put(38,21.2){\line(1,1){4}}
%\put(42,20){\line(1,1){4}}
%\put(46,18.8){\line(1,1){4}}
%\put(50,17.2){\line(1,1){4}}
%\put(54,16){\line(1,1){4}}
%\put(65,23){$x$}
%\put(30,24){\line(4,-3){23}}
%\put(42,17){\vector(3,-1){15}}
%\put(12,16){\vector(1,0){15}}
%
%\put(95,24){\line(1,2){2}}
%\put(99,24){\line(1,2){2}}
%\put(103,24){\line(1,2){2}}
%\put(107,24){\line(1,2){2}}
%\put(111,24){\line(1,2){2}}
%\put(115,24){\line(1,2){2}}
%
%\put(119,25.3){\line(1,2){2}}
%\put(123,26.6){\line(1,2){2}}
%\put(127,27.9){\line(1,2){2}}
%\put(131,29.2){\line(1,2){2}}
%\put(135,30.5){\line(1,2){2}}
%\put(139,32){\line(1,2){2}}
%\put(150,23){$x$}
%\put(115,24){\line(1,-1){14}}
%\put(115,24){\line(2,-1){17}}
%\put(115,24){\line(1,-2){9}}
%\put(126,22){\vector(3,1){15}}
%\put(97,16){\vector(1,0){15}}
%
%\put(30,0){$\mathrm{Shock}$}
%\put(116,0){$\mathrm{Rarefaction\,\, wave}$}
%\end{picture}
%\end{center}
%\caption{Boundary Riemann solutions.}\label{2}
%\end{figure}
\end{lemma}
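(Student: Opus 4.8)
The plan is to reduce the boundary Riemann problem \eqref{eq-br} to a single scalar equation for the strength $\gamma_1$ of the reflected $1$-wave and to solve it by the implicit function theorem. Since $U_a$ is constant on $\{x=kh\}$ and $\Gamma_k$ is a straight segment of slope $\tan\omega_{k,k+1}$, the solution of \eqref{eq-br} in $\Omega_{k,h}$ is self-similar about the corner $P_k$. The contact curves $C_2,C_3,C_4$ (cf. \eqref{c1}--\eqref{c3}) preserve the flow direction $v/u$, so only a $1$- or $5$-wave can turn the flow so as to meet the slip condition $(u,v)\cdot\textbf{n}_k=0$; and since $\lambda_5({U}_2^{(0)})=+{c}_2^{(0)}/\sqrt{({u}_2^{(0)})^2-({c}_2^{(0)})^2}>0$ stays bounded away from $\tan\omega_{k,k+1}$ for $|\omega_{k,k+1}|<\epsilon$, a $5$-wave emanating from $P_k$ would leave $\Omega_{k,h}$ and is thus excluded, whereas $\lambda_1({U}_2^{(0)})=-{c}_2^{(0)}/\sqrt{({u}_2^{(0)})^2-({c}_2^{(0)})^2}<0$, so the $1$-wave fits below the wall. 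Hence I would look for the above state in the form $U_b=\Phi_1(\gamma_1;U_a)$, with $U_a$ filling the region below the $1$-wave.

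Concretely, set
\[
F(\gamma_1,\omega,U_a)=-u_b(\gamma_1;U_a)\sin\omega+v_b(\gamma_1;U_a)\cos\omega,\qquad (u_b,v_b,p_b,\rho_b,Z_b)^\top=\Phi_1(\gamma_1;U_a),
\]
so that solving \eqref{eq-br} by a single $1$-wave is equivalent to $F(\gamma_1,\omega_{k,k+1},U_a)=0$. At the base point $(\gamma_1,\omega,U_a)=(0,0,{U}_2^{(0)})$ one has $F=0$ since the second component of ${U}_2^{(0)}$ vanishes; and using $\partial_{\gamma_1}\Phi_1|_{\gamma_1=0}=r_1(U_a)=\kappa_1(-\lambda_1,1,\rho(\lambda_1u-v),\rho(\lambda_1u-v)/c^2,0)^\top$ one gets $\partial_{\gamma_1}F|_{(0,0,{U}_2^{(0)})}=\kappa_1({U}_2^{(0)})>0$ by Lemma \ref{A.1}. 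The implicit function theorem then yields, for $\epsilon$ small, a unique $C^2$ function $\gamma_1=\gamma_1(\omega,U_a)$ with $\gamma_1(0,{U}_2^{(0)})=0$ solving $F=0$ near $(0,{U}_2^{(0)})$, with $|\gamma_1|\le M\epsilon$, so that $\Phi_1(\gamma_1;U_a)\in O_{\epsilon}({U}_2^{(0)})$ and the $1$-wave is admissible (a rarefaction when $\gamma_1>0$ and a shock when $\gamma_1<0$, joined $C^2$ at $\gamma_1=0$). Uniqueness of the admissible solution of \eqref{eq-br} follows because any admissible solution must be of this one-parameter form and $F=0$ has exactly the root just constructed.

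Finally, to obtain \eqref{probelm-R2} I would differentiate $F(\gamma_1(\omega,U_a),\omega,U_a)\equiv0$: since $\partial_\omega F|_{(0,0,{U}_2^{(0)})}=-{u}_2^{(0)}$, this gives $\partial_\omega\gamma_1|_{(0,{U}_2^{(0)})}={u}_2^{(0)}/\kappa_1({U}_2^{(0)})=:K_b>0$. A first-order Taylor expansion of $\gamma_1$ about $(0,{U}_2^{(0)})$, using $\gamma_1(0,{U}_2^{(0)})=0$ and $|U_a-{U}_2^{(0)}|<\epsilon$, then gives $\gamma_1=K_b\omega_{k,k+1}+O(1)(|\omega_{k,k+1}|^2+|U_a-{U}_2^{(0)}|)$, since the term linear in $U_a-{U}_2^{(0)}$, the quadratic terms, and the cross term $|\omega_{k,k+1}|\,|U_a-{U}_2^{(0)}|\le\tfrac12|\omega_{k,k+1}|^2+\tfrac12|U_a-{U}_2^{(0)}|^2$ are all absorbed into $O(1)(|\omega_{k,k+1}|^2+|U_a-{U}_2^{(0)}|)$. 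I expect the only delicate points to be the causal argument singling out the $1$-wave (excluding the other four families) and the transversality $\partial_{\gamma_1}F\ne0$, which hinges on $\kappa_1>0$ from Lemma \ref{A.1}; granting these, the rest is a routine application of the implicit function theorem and Taylor's formula.
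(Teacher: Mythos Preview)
Your proposal is correct and follows essentially the same approach as the paper: both define the scalar function $(u_b,v_b)\cdot\textbf{n}_k$ with $U_b=\Phi_1(\gamma_1;U_a)$, check that it vanishes at $(\gamma_1,\omega,U_a)=(0,0,{U}_2^{(0)})$, compute $\partial_{\gamma_1}=\kappa_1({U}_2^{(0)})>0$ via Lemma \ref{A.1}, apply the implicit function theorem, and then Taylor expand to obtain $K_b={u}_2^{(0)}/\kappa_1({U}_2^{(0)})>0$. Your additional causal discussion singling out the $1$-family (contacts preserve $v/u$, the $5$-wave exits above the wall) is more explicit than the paper, which simply posits the $1$-wave ansatz, but the analytic core is identical.
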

\begin{proof}
Let us consider the function
\[
\varphi_k(\gamma_1,\omega_{k,k+1})=(u,v)\cdot\textbf{n}_k=\Big(\Phi_{1}^{(1)}(\gamma_1;U_a),\Phi_{1}^{(2)}(\gamma_1;U_a)\Big)\cdot
\Big(-\sin (\omega_{k,k+1}),\cos(\omega_{k,k+1})\Big),
\]
where $\Phi_1^{(i)}(i=1,2)$ is the $i$-th component of  $\Phi_1$ .

Note that $\varphi_k(0,0)|_{\{U_a={U}_2^{(0)}\}}=0$, and
$$
 \frac{\partial \varphi_k(\gamma_1,\omega_{k,k+1})}{\partial\gamma_1}{\big|_{\{\gamma_1=0,\omega_{k,k+1}=0,U_a
 ={U}_2^{(0)}\}}}=\kappa_1({U}_2^{(0)})>0,
$$
with $\kappa_1({U}_2^{(0)})$ given by Lemma \ref{A.1}. It follows from the implicit function theorem that there exists $\epsilon>0$, such that for $U_a\in O_{\epsilon}({U}_2^{(0)})$ and $|\omega_{k,k+1}|<\epsilon$, equation $\varphi_k(\gamma_1,\omega_{k,k+1})=0$ admits a unique solution  $\gamma_1(\omega_{k,k+1})$. Moreover, by the Taylor expansion formula, we have
\[
\gamma_1(\omega_{k,k+1})=\gamma_1(0)+\frac{\partial\gamma_1}{\partial\omega_{k,k+1}}{\big|_{\{\omega_{k,k+1}=0\}}} \omega_{k,k+1}+O(1)|\omega_{k,k+1}|^2.
\]

Differentiating $\varphi_k(\gamma_1(\omega_{k,k+1}),\omega_{k,k+1})=0$ with respect to $\omega_{k,k+1}$, and letting $\omega_{k,k+1}=0$ and $U_a={U}_2^{(0)}$, we have
\[
\frac{\partial\gamma_1}{\partial\omega_{k,k+1}}{\big|_{\{\omega_{k,k+1}=0,U_a={U}_2^{(0)}\}}}=\frac{{u}_2^{(0)}}{\kappa_1({U}_2^{(0)})}>0.
\]
Thus, we have $K_b>0$ for sufficiently small $\epsilon$.
\end{proof}

 Then, we can obtain the estimates of the weak wave reflection on the boundary.
\begin{proposition}\label{prop-in-b}
Suppose that the three constant states $U_a,U_m$ and $U_{k-1}\in O_{\epsilon}({U}_2^{(0)})$ satisfy that (see Fig. \ref{4})
\begin{align}
&V_m=\tilde{\Phi}(\alpha_5,\alpha_3,\alpha_2;V_a),\quad Z_m=Z_a+\alpha_4,\\
&U_{k-1}=\Phi_1(\beta_1;U_m),\quad (u_{k-1},v_{k-1}) \cdot \textbf{n}_{k-1}=0.
\end{align}
Then, for constant state $U_{k}\in O_{\epsilon}({U}_2^{(0)})$ which satisfies that
\[
U_{k}=\Phi_1(\gamma_1;U_a), \quad (u_{k},v_{k}) \cdot \textbf{n}_{k}=0,
\]
it holds that
\begin{equation}\label{estimate-in-b}
\gamma_1=\beta_1+K_{b0}\omega_{k}+K_{b2}\alpha_2+K_{b3}\alpha_3+K_{b5}\alpha_5,
\end{equation}
where $K_{b0},K_{b2},K_{b3},K_{b5}$ are $C^2$-functions of $\beta_1,\omega_{k},\alpha_2,\alpha_3,\alpha_5,\omega_{k-1,k}$ and $U_a$.  Furthermore, $K_{b0}$ is bounded, and when $\beta_1=\omega_{k}=\alpha_2=\alpha_3=\alpha_5=\omega_{k-1,k}=0,U_a={U}_2^{(0)}$, it holds that
\begin{equation}\label{estimate-Kb}
K_{b5}=1,\quad K_{bi}=0,\quad \text{$i=2,3$}.
\end{equation}
\end{proposition}
\begin{proof}
Let us consider the function:
\begin{align*}
&\varphi_{k,k-1}(\gamma_1,\beta_1,\omega_k,\alpha_2,\alpha_3,\alpha_5)\\
:=&\Big(\Phi_{1}^{(1)}(\gamma_1;V_a),\Phi_{1}^{(2)}(\gamma_1;V_a)\Big)\cdot \textbf{n}_{k}
-\Big(\Phi_{1}^{(1)}(\beta_1;\tilde{\Phi}(\alpha_5,\alpha_3,\alpha_2;V_a)),\Phi_{1}^{(2)}
(\beta_1;\tilde{\Phi}(\alpha_5,\alpha_3,\alpha_2;V_a))\Big)\cdot\textbf{n}_{k-1}.
\end{align*}

Note that $\varphi_{k,k-1}(0,0,0,0,0,0)=0$ and $\frac{\partial\varphi_{k,k-1}}{\partial\gamma_1}|_{\{\gamma_1=0,U_{a}={U}_2^{(0)},\omega_{k,k+1}=0\}}=\kappa_1({U}_2^{(0)})>0$,
it follows from the implicit function theorem that $\gamma_1$ can be solved as a $C^2$ function of $\beta_1,\omega_k,\alpha_2,\alpha_3,\alpha_5,\omega_{k-1,k}$,and $V_a$. Next, by the Taylor expansion formula,  we have
\begin{align*}
\gamma_1&=\gamma_1(\beta_1,0,0,0,0)+\gamma_1(\beta_1,\omega_k,0,0,0)-\gamma_1(\beta_1,0,0,0,0)+\gamma_1(\beta_1,\omega_k,\alpha_2,0,0)\\
&\hspace{1em}-\gamma_1(\beta_1,\omega_k,0,0,0)+\gamma_1(\beta_1,\omega_k,\alpha_2,\alpha_3,0)-\gamma_1(\beta_1,\omega_k,\alpha_2,0,0)\\
&\hspace{1em}+\gamma_1(\beta_1,\omega_k,\alpha_2,\alpha_3,\alpha_5)-\gamma_1(\beta_1,\omega_k,\alpha_2,\alpha_3,0)\\
&=\beta_1+K_{b0}\omega_{k}+K_{b2}\alpha_2+K_{b3}\alpha_3+K_{b5}\alpha_5.
\end{align*}

Differentiating $\varphi_{k,k-1}(\gamma_1,\beta_1,\omega_k,\alpha_2,\alpha_3,\alpha_5)=0$ with respect to $\omega_k,\alpha_2,\alpha_3,\alpha_5$, and letting $\beta_1=\omega_{k}=\alpha_2=\alpha_3=\alpha_5=\omega_{k-1,k}=0,$ and letting $U_a={U}_2^{(0)}$, we have
\[
\frac{\partial\gamma_1}{\partial\omega_k}=\frac{{u}_2^{(0)}}{\kappa_1({U}_2^{(0)})},\quad \frac{\partial\gamma_1}{\partial\alpha_i}=\frac{r_{i}^{(2)}({U}_2^{(0)})}{\kappa_1({U}_2^{(0)})},\quad i=2,3,5,
\]
where $r_{5}^{(2)}({U}_2^{(0)})=\kappa_5({U}_2^{(0)}),r_{2}^{(2)}({U}_2^{(0)})=r_{3}^{(2)}({U}_2^{(0)})=0.$ Then by Lemma \ref{A.1}, we have \eqref{estimate-Kb}.
\end{proof}

Finally, let us consider the wave interaction estimates involving the strong contact discontinuity for \eqref{eq-reaction-h}. First we have the following lemma.
\begin{lemma}\label{estimate-F}
For the constant states ${V}_1^{(0)}=({u}_1^{(0)},0,{p}_1^{(0)},{\rho}_1^{(0)})^\top$ and ${V}_2^{(0)}=({u}_2^{(0)},0,{p}_2^{(0)},{\rho}_2^{(0)})^\top$,  it holds that
\begin{enumerate}
\item
\begin{align}\label{estimate-det}
\det(&\tilde{r}_5({V}_2^{(0)}),\partial_{\sigma_3}\mathcal{F}(\sigma_{30},\sigma_{20};{V}_1^{(0)}),\partial_{\sigma_2}
\mathcal{F}(\sigma_{30},\sigma_{20};{V}_1^{(0)}),
\nabla_{V}\mathcal{F}(\sigma_{30},\sigma_{20};{V}_1^{(0)})\cdot\tilde{r}_1({V}_1^{(0)}))\nonumber\\
&=\kappa_{1}({V}_1^{(0)})\kappa_{5}({V}_2^{(0)})({\rho}_{1}^{(0)})^{2}({u}_{1}^{(0)})^{2}e^{\sigma_{20}+\sigma_{30}}
(\lambda_{5}({V}_2^{(0)})e^{2\sigma_{20}+\sigma_{30}}+\lambda_{5}({V}_1^{(0)}))>0.
\end{align}
where $\tilde{r}_i(i=1,5)$ is the vector which only contains the first four components of $r_i$.
\item
For any $V_a\in O_{\epsilon}({V}_1^{(0)})$ and $\sigma_j\in O_{\hat{\epsilon}}(\sigma_{j0})$ which satisfies that $\mathcal{F}(\sigma_3,\sigma_2;V_a)\in O_{\epsilon}({V}_2^{(0)})$ with some $\hat{\epsilon}=\hat{\epsilon}(\epsilon)\to 0$ as $\epsilon\to 0$,
 it holds that
\begin{equation}\label{diff-estimate-f}
|\mathcal{F}(\sigma_3,\sigma_2;V_a)-\mathcal{F}(\sigma_{30},\sigma_{20};V_a)|\leq C(|\sigma_3-\sigma_{30}|+|\sigma_2-\sigma_{20}|),
\end{equation}
for some constant $C$.
\end{enumerate}
\end{lemma}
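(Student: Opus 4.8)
The plan is to treat the two claims separately, since they rely on different mechanisms. For part (1), I would simply compute the $4\times 4$ determinant directly. The four column vectors are $\tilde r_5({V}_2^{(0)})$, the two tangent vectors $\partial_{\sigma_3}\mathcal F$ and $\partial_{\sigma_2}\mathcal F$ evaluated at $(\sigma_{30},\sigma_{20};{V}_1^{(0)})$, and the pushforward $\nabla_V\mathcal F\cdot\tilde r_1({V}_1^{(0)})$. Using the explicit formula $\mathcal F(\sigma_3,\sigma_2;V_a)=(u_a e^{\sigma_2},v_a e^{\sigma_2},p_a,\rho_a e^{\sigma_3})^\top$, the partials are easy: $\partial_{\sigma_2}\mathcal F=(u_a e^{\sigma_2},v_a e^{\sigma_2},0,0)^\top$ and $\partial_{\sigma_3}\mathcal F=(0,0,0,\rho_a e^{\sigma_3})^\top$; at $V_a={V}_1^{(0)}$ one has $v_1^{(0)}=0$, so $\partial_{\sigma_2}\mathcal F$ reduces to $({u}_1^{(0)} e^{\sigma_{20}},0,0,0)^\top$. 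The Jacobian $\nabla_V\mathcal F$ is diagonal-ish (it scales the $u,v$ components by $e^{\sigma_2}$ and the $\rho$ component by $e^{\sigma_3}$, fixes $p$), so $\nabla_V\mathcal F\cdot\tilde r_1({V}_1^{(0)})$ is obtained from $\tilde r_1({V}_1^{(0)})=\kappa_1({V}_1^{(0)})(-\lambda_1,1,\rho(\lambda_1 u-v),\rho(\lambda_1 u-v)/c^2,0)^\top|_{{V}_1^{(0)}}$ (first four components) by componentwise rescaling. Then I expand the determinant along the third row (which is nonzero only in the column $\nabla_V\mathcal F\cdot\tilde r_1$, coming from the pressure component of $\tilde r_1$) and the remaining $3\times 3$ minor along the fourth row (nonzero in the $\partial_{\sigma_3}\mathcal F$ column and the $\nabla_V\mathcal F\cdot\tilde r_1$ column). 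Collecting the factors $\kappa_1,\kappa_5,{\rho}_1^{(0)}$ powers, ${u}_1^{(0)}$ powers, and exponentials gives the stated closed form; the final positivity follows because $\kappa_i>0$ by Lemma~\ref{A.1}, $\lambda_5>0$ for supersonic flow with $v=0$ (indeed $\lambda_5({V}_k^{(0)})=0$ when $v=0$, so I must be careful here — actually with $v=0$, $\lambda_5 = c/\sqrt{u^2-c^2}>0$), and all exponentials are positive.

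The one subtlety in part (1) is getting the signs and the genuinely-nonlinear normalization right: $\kappa_1$ and $\kappa_5$ are fixed by $r_i\cdot\nabla\lambda_i=1$, and Lemma~\ref{A.1} only guarantees their positivity at the background states and in a neighborhood. Since the determinant is being evaluated exactly at the background states ${V}_1^{(0)},{V}_2^{(0)}$, Lemma~\ref{A.1} applies directly and $\kappa_1({V}_1^{(0)})\kappa_5({V}_2^{(0)})>0$. The expression $\lambda_5({V}_2^{(0)})e^{2\sigma_{20}+\sigma_{30}}+\lambda_5({V}_1^{(0)})$ is then a sum of two strictly positive terms, so positivity is immediate once the algebra is done. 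This is the place where a sign error would be most costly, so I would double-check by also noting that $\det\neq 0$ is exactly the transversality needed for the implicit function theorem in the forthcoming interaction lemma, which makes the qualitative conclusion unsurprising.

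For part (2), the estimate \eqref{diff-estimate-f} is just a Lipschitz bound for the $C^2$ map $(\sigma_2,\sigma_3)\mapsto \mathcal F(\sigma_3,\sigma_2;V_a)$, uniform in $V_a$ ranging over the compact neighborhood $\overline{O_\epsilon({V}_1^{(0)})}$. Concretely, writing the difference as
\[
\mathcal F(\sigma_3,\sigma_2;V_a)-\mathcal F(\sigma_{30},\sigma_{20};V_a)=\big(u_a(e^{\sigma_2}-e^{\sigma_{20}}),\,v_a(e^{\sigma_2}-e^{\sigma_{20}}),\,0,\,\rho_a(e^{\sigma_3}-e^{\sigma_{30}})\big)^\top,
\]
the mean value theorem gives $|e^{\sigma_2}-e^{\sigma_{20}}|\le e^{|\sigma_{20}|+\hat\epsilon}|\sigma_2-\sigma_{20}|$ and similarly for the $\sigma_3$ term, and $|u_a|,|v_a|,|\rho_a|$ are bounded on the neighborhood; taking $C$ to be the resulting constant (depending only on the data, hence absorbable into the universal $M$) finishes it. I do not expect any obstacle in part (2). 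The only mild bookkeeping point is that $\hat\epsilon=\hat\epsilon(\epsilon)\to 0$, which is already granted in the hypothesis, so the exponential prefactors stay bounded and $C$ can be chosen uniformly. Overall the main work — and the only genuinely delicate step — is the determinant computation in part (1).
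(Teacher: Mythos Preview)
Your approach is essentially the same as the paper's: direct computation of the $4\times 4$ determinant using the explicit formula for $\mathcal F$, followed by the explicit difference and a mean-value/Taylor bound for part~(2). One small slip to fix: the third (pressure) row of the determinant has \emph{two} nonzero entries, not one---both $\tilde r_5({V}_2^{(0)})$ and $\nabla_V\mathcal F\cdot\tilde r_1({V}_1^{(0)})$ carry a nonzero pressure component $\kappa_i\rho(\lambda_i u-v)$---so your proposed expansion along that row must account for both terms (the paper simply writes out the full $4\times 4$ matrix and evaluates).
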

\begin{proof}
Since $\mathcal{F}(\sigma_3,\sigma_2;V_a)
=(u_a e^{\sigma_2},v_a e^{\sigma_2},p_a,\rho_a e^{\sigma_3})^\top$ for any  $V_a\in O_{\epsilon}({V_1}^{(0)})$, ${u}_2^{(0)}={u}_1^{(0)} e^{\sigma_{20}}$, and ${\rho}_2^{(0)}={\rho}_1^{(0)} e^{\sigma_{30}}$, direct calculations gives that,
\begin{align*}
&\det(\tilde{r}_5({V}_2^{(0)}),\partial_{\sigma_3}\mathcal{F}(\sigma_{30},\sigma_{20};{V}_1^{(0)}),\partial_{\sigma_2}
\mathcal{F}(\sigma_{30},\sigma_{20};{V}_1^{(0)}),
\nabla_{V}\mathcal{F}(\sigma_{30},\sigma_{20};{V}_1^{(0)})\cdot\tilde{r}_1({V}_1^{(0)}))\\
&=\kappa_{1}({V}_1^{(0)})\kappa_{5}({V}_2^{(0)})\begin{vmatrix}
-\lambda_5({V}_2^{(0)}) & 0 & {u}_1^{(0)} e^{\sigma_{20}} &-\lambda_1({V}_1^{(0)})e^{\sigma_{20}}\\
1& 0 & 0 & e^{\sigma_{20}} \\
\lambda_5({V}_2^{(0)}){\rho}_{2}^{(0)}{u}_{2}^{(0)} & 0 & 0 & \lambda_1({V}_1^{(0)}){\rho}_{1}^{(0)}{u}_{1}^{(0)}\\
\lambda_5({V}_2^{(0)}){\rho}_{2}^{(0)}{u}_{2}^{(0)}/{({c}_2^{(0)})}^2 & {\rho}_1^{(0)} e^{\sigma_{30}} &0 &\lambda_1({V}_2^{(0)}){\rho}_{1}^{(0)}{u}_{1}^{(0)}e^{\sigma_{30}}/{({c}_1^{(0)})}^2
\end{vmatrix}\\
&=\kappa_{1}({V}_1^{(0)})\kappa_{5}({V}_2^{(0)})({\rho}_{1}^{(0)})^{2}({u}_{1}^{(0)})^{2}e^{\sigma_{20}+\sigma_{30}}
(\lambda_{5}({V}_2^{(0)})e^{2\sigma_{20}+\sigma_{30}}+\lambda_{5}({V}_1^{(0)}))>0.
\end{align*}

Moreover, note that
$$
\mathcal{F}(\sigma_3,\sigma_2;V_a)-\mathcal{F}(\sigma_{30},\sigma_{20};V_a)=(u_a(e^{\sigma_2}-e^{\sigma_{20}}),v_a (e^{\sigma_2}-e^{\sigma_{20}}),0,\rho_a(e^{\sigma_3}-e^{\sigma_{30}}))^\top,
$$
then by the Taylor expansion formula, we can obtain \eqref{diff-estimate-f}. %This completes the proof.
\end{proof}

We remark that \eqref{estimate-det} is essential to estimate the strengths of reflected weak waves in the wave interaction of the strong contact discontinuity and weak waves governed by \eqref{eq-reaction-h}. Now, we can establish the solvability of the Riemann problem involving the strong contact discontinuity.
\begin{lemma}\label{lemma-R3}
There exists $\epsilon>0$ such that, for any given constant states $U_a\in O_{\epsilon}({U}_1^{(0)})$ and $U_b\in O_{\epsilon}({U}_2^{(0)})$, the Riemann problem \eqref{initial-r} admits a unique admissible solution that consists of a weak 1-wave, a strong contact discontinuity, and a weak 5-wave. In addition, $U_b$ can be represented by
\begin{equation}\label{problem-R3}
\begin{cases}
&V_b=\tilde{\Phi}_5(\alpha_5;\mathcal{F}(\sigma_3,\sigma_2;\tilde{\Phi}_1(\alpha_1;V_a))),\\
&Z_b=Z_a+\alpha_4,
\end{cases}
\end{equation}
with $V_b=(u_b,v_b, p_b, \rho_b)^\top$.
\end{lemma}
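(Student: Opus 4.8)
\textbf{Proof proposal for Lemma \ref{lemma-R3}.}

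The plan is to solve the Riemann problem \eqref{initial-r} with $U_a\in O_{\epsilon}({U}_1^{(0)})$ below and $U_b\in O_{\epsilon}({U}_2^{(0)})$ above by seeking an intermediate structure: from $U_a$ a weak $1$-wave produces a state in $O_{\epsilon}({U}_1^{(0)})$, then a strong contact discontinuity (parameterized by $(\sigma_2,\sigma_3)$ near $(\sigma_{20},\sigma_{30})$, with the reactant fraction unchanged across all three waves) carries us into $O_{\epsilon}({U}_2^{(0)})$, and finally a weak $5$-wave reaches $U_b$. Since $Z$ is constant across $1$-waves, $5$-waves, and the contact discontinuity (recall \eqref{c1}--\eqref{c3} and \eqref{rarefaction}--\eqref{shock} all give $dZ=0$ or preserve $Z$), the $Z$-component decouples and yields immediately $Z_b=Z_a+\alpha_4$ with $\alpha_4$ determined. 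It therefore suffices to solve the $V$-equation
\[
V_b=\tilde{\Phi}_5(\alpha_5;\mathcal{F}(\sigma_3,\sigma_2;\tilde{\Phi}_1(\alpha_1;V_a)))
\]
for $(\alpha_1,\sigma_2,\sigma_3,\alpha_5)$ as functions of $(V_a,V_b)$.

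The key step is to apply the implicit function theorem to the map
\[
\Psi(\alpha_1,\sigma_2,\sigma_3,\alpha_5;V_a):=\tilde{\Phi}_5(\alpha_5;\mathcal{F}(\sigma_3,\sigma_2;\tilde{\Phi}_1(\alpha_1;V_a)))
\]
around the background values $(\alpha_1,\sigma_2,\sigma_3,\alpha_5)=(0,\sigma_{20},\sigma_{30},0)$, $V_a={V}_1^{(0)}$, where $\Psi={V}_2^{(0)}$. Its Jacobian with respect to $(\alpha_5,\sigma_3,\sigma_2,\alpha_1)$ at this point, by the chain rule and the normalizations $\frac{\partial\tilde\Phi_i}{\partial\alpha_i}|_0=\tilde r_i$, equals
\[
\det\big(\tilde{r}_5({V}_2^{(0)}),\,\partial_{\sigma_3}\mathcal{F}(\sigma_{30},\sigma_{20};{V}_1^{(0)}),\,\partial_{\sigma_2}\mathcal{F}(\sigma_{30},\sigma_{20};{V}_1^{(0)}),\,\nabla_{V}\mathcal{F}(\sigma_{30},\sigma_{20};{V}_1^{(0)})\cdot\tilde{r}_1({V}_1^{(0)})\big),
\]
which is exactly the quantity shown to be strictly positive in part (1) of Lemma \ref{estimate-F}. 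Hence $\Psi$ is a local $C^1$ (indeed $C^2$, since all wave curves are $C^2$) diffeomorphism, and for $\epsilon$ small enough and all $U_a\in O_{\epsilon}({U}_1^{(0)})$, $U_b\in O_{\epsilon}({U}_2^{(0)})$ there is a unique solution $(\alpha_1,\sigma_2,\sigma_3,\alpha_5)$ lying in the appropriate small neighbourhood of the background parameters (using also that $\hat\epsilon(\epsilon)\to 0$ from Lemma \ref{estimate-F}(2) to keep the intermediate state inside $O_{\epsilon}({V}_2^{(0)})$ along the contact curve). One then checks admissibility: the $1$- and $5$-waves are genuinely nonlinear and the usual Lax criterion selects shock or rarefaction according to the sign of $\alpha_1,\alpha_5$, while the intermediate contact discontinuity is a genuine (entropy-admissible) strong contact discontinuity because, at the background, $p$ is continuous across it and the tangential structure matches, and this persists under small perturbation.

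I expect the main obstacle to be purely bookkeeping: verifying that the intermediate states stay in the prescribed neighbourhoods so that every elementary-wave curve used is well-defined and $C^2$ there — in particular controlling the contact-discontinuity parameters $(\sigma_2,\sigma_3)$ near $(\sigma_{20},\sigma_{30})$ via the estimate \eqref{diff-estimate-f}, and confirming that the strong contact discontinuity remains admissible (the pressure-continuity and slip conditions) away from the background. The genuinely nonlinear $1$- and $5$-wave parts are standard Lax theory and pose no difficulty; the decoupling of $Z$ makes the fifth component trivial. Once the invertibility of $\Psi$ (granted by Lemma \ref{estimate-F}(1)) and these neighbourhood bounds are in place, uniqueness and the representation \eqref{problem-R3} follow directly from the implicit function theorem.
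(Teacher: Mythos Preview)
Your proposal is correct and follows essentially the same route as the paper: decouple the $Z$-component trivially, then apply the implicit function theorem to the map $\Psi$ (the paper calls it $\varphi_c$) at the background point $(0,\sigma_{20},\sigma_{30},0,V_1^{(0)},V_2^{(0)})$, verifying that the Jacobian in $(\alpha_5,\sigma_3,\sigma_2,\alpha_1)$ is nonsingular. The only cosmetic difference is that you invoke Lemma~\ref{estimate-F}(1) for the nonvanishing of the Jacobian, whereas the paper recomputes the $4\times4$ determinant explicitly inside the proof; your chain-rule identification of the $\alpha_1$-column as $\nabla_V\mathcal{F}\cdot\tilde r_1(V_1^{(0)})$ is in fact the correct one and matches Lemma~\ref{estimate-F}(1) exactly.
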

\begin{proof}
It is clear from  \eqref{c3-1} that $Z_b=Z_a+\alpha_4$.

Next, let us consider the function:
\[
\varphi_c(\alpha_5,\sigma_3,\sigma_2,\alpha_1,{V}_a,V_b)=
\tilde{\Phi}_5(\alpha_5;\mathcal{F}(\sigma_3,\sigma_2;\tilde{\Phi}_1(\alpha_1;{V}_a)))-V_b.
\]
Obviously, we have $\varphi_c(0,\sigma_{30},\sigma_{20},0,{V}_1^{(0)},{V}_2^{(0)})=0$, and
\begin{align*}
&\det\Big(\frac{\partial\varphi_c(\alpha_5,\sigma_3,\sigma_2,\alpha_1,{V}_a,V_b)}
{\partial(\alpha_5,\sigma_3,\sigma_2,\alpha_1)}\Big)
_{|_{\{\alpha_5=\alpha_1=0,\sigma_3=\sigma_{30},\sigma_2=\sigma_{20},V_a={V}_1^{(0)},V_b={V}_2^{(0)}\}}}\nonumber\\
&=\det(\tilde{r}_5({V}_2^{(0)}),\partial_{\sigma_3}\mathcal{F}(\sigma_{30},\sigma_{20};{V}_1^{(0)}),
\partial_{\sigma_2}\mathcal{F}(\sigma_{30},\sigma_{20};{V}_1^{(0)}),\tilde{r}_1({V}_1^{(0)}))\\
&=\kappa_{1}({V}_1^{(0)})\kappa_{5}({V}_2^{(0)})\begin{vmatrix}
-\lambda_5({V}_2^{(0)}) & 0 & {u}_1^{(0)} e^{\sigma_{20}} &-\lambda_1({V}_1^{(0)})\\
1& 0 & 0 & 1 \\
\lambda_5({V}_2^{(0)}){\rho}_{2}^{(0)}{u}_{2}^{(0)} & 0 & 0 & \lambda_1({V}_1^{(0)}){\rho}_{1}^{(0)}{u}_{1}^{(0)}\\
\lambda_5({V}_2^{(0)}){\rho}_{2}^{(0)}{u}_{2}^{(0)}/{({c}_2^{(0)})}^2 & {\rho}_1^{(0)} e^{\sigma_{30}} &0 &\lambda_1({V}_2^{(0)}){\rho}_{1}^{(0)}{u}_{1}^{(0)}/{({c}_1^{(0)})}^2
\end{vmatrix}\\
&=\kappa_{1}({V}_1^{(0)})\kappa_{5}({V}_2^{(0)}){(\rho}_{1}^{(0)})^{2}({u}_{1}^{(0)})^{2}e^{\sigma_{20}+\sigma_{30}}
(\lambda_{5}({V}_2^{(0)})e^{\sigma_{20}+\sigma_{30}}+\lambda_{5}({V}_1^{(0)}))>0.
\end{align*}

Then it follows from the implicit function theorem that there exists $\epsilon>0$, such that for any given constant states $U_a\in O_{\epsilon}({U}_1^{(0)})$ and $U_b\in O_{\epsilon}({U}_2^{(0)})$, the equation
$$
\varphi_c(\alpha_5,\sigma_3,\sigma_2,\alpha_1,V_a,V_b)=0
$$
admits a unique solution $\alpha_5,\sigma_3,\sigma_2,\alpha_1$.
\end{proof}

Now we shall derive the wave interaction estimates between the strong contact discontinuity and weak waves. There are two cases depending on how the strong contact discontinuity and weak waves interact. The first case is that, as shown in Fig. \ref{5}, the weak waves approach the strong contact discontinuity from the above.  For this case, we have  the following lemma.
\begin{figure}[ht]
\begin{center}
\setlength{\unitlength}{1mm}
\begin{picture}
(50,70)(1,2)
\linethickness{1pt}
%\multiput(0,0)(0,3){24}{\line(0,2){2}}
%\multiput(26,0)(0,3){24}{\line(0,2){2}}
%\multiput(52,0)(0,3){24}{\line(0,2){2}}
\put(0,0){\line(0,1){72}}
\put(26,0){\line(0,1){72}}
\put(52,0){\line(0,1){72}}
\put(0,54){\line(3,2){20}}
\put(0,54){\line(3,-1){20}}
\put(0,54){\line(5,2){20}}
\put(0,18){\line(4,3){20}}
\thicklines
\put(0,18){\line(5,2){20}}
\thinlines
\put(0,18){\line(4,-1){20}}
\put(26,40){\line(2,-1){20}}
\put(26,40){\line(3,2){20}}
\thicklines
\put(26,40){\line(5,1){20}}
\thinlines
\put(2,68){$U_b$}
\put(2,35){$U_m$}
\put(2,5){$U_a$}
\put(36,18){$U_a$}
\put(36,60){$U_b$}
\put(12,66){$\beta_5$}
\put(12,56){$\beta_{2(3,4)}$}
\put(12,46){$\beta_1$}
\put(12,34){$\alpha_5$}
\put(12,20){$\sigma_{2(3)},\alpha_{4}$}
\put(12,11){$\alpha_1$}
\put(40,54){$\gamma_5$}
\put(38,39){$\sigma'_{2(3)},\gamma_{4}$}
\put(40,29){$\gamma_1$}
\end{picture}
\end{center}
\caption{Weak waves approach the strong contact discontinuity from above.}\label{5}
\end{figure}
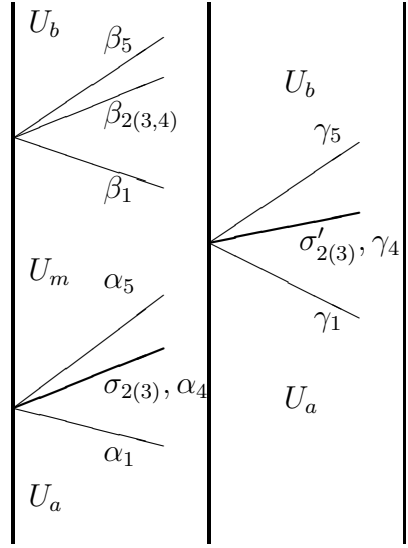

\begin{proposition}\label{prop-in-c1}
For any given three constant states $U_a\in O_{\epsilon}({U}_1^{(0)})$, and $U_m,U_b\in O_{\epsilon}({U}_2^{(0)})$, (see Fig. \ref{5}), with the assumptions that
\begin{align*}
&V_m=\tilde{\Phi}_5(\alpha_5;\mathcal{F}(\sigma_3,\sigma_2;\tilde{\Phi}_1(\alpha_1;V_a))),\quad Z_m=Z_a+\alpha_4,\\
&V_b=\tilde{\Phi}(\beta_5,\beta_3,\beta_2,\beta_1;V_m),\quad Z_b=Z_m+\beta_4,\\
&V_b=\tilde{\Phi}_5(\gamma_5;\mathcal{F}(\sigma'_3,\sigma'_2;\tilde{\Phi}_1(\gamma_1;V_a))),\quad Z_b=Z_a+\gamma_4,
\end{align*}
it holds that
\begin{equation}\label{estimate-in-c1}
\begin{cases}
&\gamma_1=K_{21}\beta_1+\alpha_1+O(1)\Delta'(\alpha_{5},\boldsymbol{\beta}^{*}),\\
&\sigma'_{i}=K_{2i}\beta_1+\beta_i+\sigma_i+O(1)\Delta'(\alpha_{5},\boldsymbol{\beta}^{*}),i=2,3,\\
&\gamma_4=\alpha_4+\beta_4,\\
&\gamma_5=K_{25}\beta_1+\alpha_5+\beta_5+O(1)\Delta'(\alpha_{5},\boldsymbol{\beta}^{*}),
\end{cases}
\end{equation}
where $\Delta'(\alpha_{5},\boldsymbol{\beta}^{*})=|\alpha_5|(|\beta_1|+|\beta_2|+|\beta_3|)+\Delta_5(\alpha_{5},\beta_{5})$. Furthermore, $\sum\limits_{i=1}^{3}|K_{2i}|$ is bounded, and when $\beta=\alpha_1=\alpha_4=\alpha_5=0,\sigma_2=\sigma_{20},\sigma_3=\sigma_{30}$, it holds that $|K_{25}|<1$.
\end{proposition}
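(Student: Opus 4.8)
The plan is to reduce \eqref{estimate-in-c1} to a Taylor expansion of an implicitly defined interaction map and then to isolate and estimate the reflection coefficient $K_{25}$ by an explicit determinant computation. First, the reactant component decouples: since $[Z]=0$ across every weak $1$- and $5$-wave by \eqref{rarefaction} and \eqref{shock}, combining $Z_m=Z_a+\alpha_4$, $Z_b=Z_m+\beta_4$ and $Z_b=Z_a+\gamma_4$ gives $\gamma_4=\alpha_4+\beta_4$ with no remainder. For the four $V$-components, the hypotheses give $V_b=\tilde\Phi(\beta_5,\beta_3,\beta_2,\beta_1;V_m)$ (Lemma \ref{lemma-R1}) with $V_m=\tilde\Phi_5(\alpha_5;\mathcal F(\sigma_3,\sigma_2;\tilde\Phi_1(\alpha_1;V_a)))$, so $V_b$ is a $C^2$ function of $(\alpha_1,\sigma_2,\sigma_3,\alpha_5,\beta_1,\beta_2,\beta_3,\beta_5,V_a)$; Lemma \ref{lemma-R3} applied to the pair $(V_a,V_b)$ then produces $(\gamma_1,\sigma_2',\sigma_3',\gamma_5)$ as $C^2$ functions of $(V_a,V_b)$, hence, by composition, as $C^2$ functions of all the incoming strengths, for $\epsilon$ small.

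Next I would extract the linear structure by the telescoping Taylor expansion already used in the proof of Proposition \ref{prop-in-b}. Because the unique solution is exactly $(\gamma_1,\sigma_2',\sigma_3',\gamma_5)=(\alpha_1,\sigma_2,\sigma_3,\alpha_5)$ when $\beta=0$, there is no constant term and no $\beta$-free term beyond $\alpha_1,\sigma_2,\sigma_3,\alpha_5$. Differentiating the defining relation at the base point and using the explicit forms \eqref{c1}--\eqref{c3} together with $\mathcal F(\sigma_3,\sigma_2;V)=(ue^{\sigma_2},ve^{\sigma_2},p,\rho e^{\sigma_3})^\top$ shows that, to first order, the weak $2$- and $3$-contact strengths $\beta_2,\beta_3$ enter $\sigma_2',\sigma_3'$ additively (indeed exactly, by the multiplicative form of $\mathcal F$), $\beta_5$ enters $\gamma_5$ additively, and none of $\beta_2,\beta_3,\beta_5$ feeds back into $\gamma_1$ at first order; differentiating with respect to $\beta_1$ defines $K_{21},K_{22},K_{23},K_{25}$, which, being values of continuous functions on a compact neighbourhood, are bounded, so in particular $\sum_{i=1}^{3}|K_{2i}|$ is bounded. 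For the quadratic remainder I would use that the $\beta$-waves form a single non-interacting Riemann fan, while the pre-existing $5$-wave $\alpha_5$ (lying just above the strong contact discontinuity, with the largest characteristic slope) can still overtake and cross $\beta_1,\beta_2,\beta_3$, and composes with $\beta_5$ within the same family; a bookkeeping of these interactions in the spirit of Proposition \ref{prop-in-w} yields the remainder $O(1)\Delta'(\alpha_{5},\boldsymbol{\beta}^{*})$ with $\Delta'(\alpha_{5},\boldsymbol{\beta}^{*})=|\alpha_5|(|\beta_1|+|\beta_2|+|\beta_3|)+\Delta_5(\alpha_5,\beta_5)$, and hence \eqref{estimate-in-c1}.

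The heart of the argument is the strict inequality $|K_{25}|<1$ at $\beta=\alpha_1=\alpha_4=\alpha_5=0$, $\sigma_2=\sigma_{20}$, $\sigma_3=\sigma_{30}$, so that $V_a=V_1^{(0)}$ and $V_m=V_2^{(0)}$. Linearizing the defining relation at this point, with $\beta_1$ the only active strength, $\gamma_5$ is determined by the $4\times4$ linear system
\[
\beta_1\,\tilde r_1(V_2^{(0)})=(\sigma_3'-\sigma_{30})\,\partial_{\sigma_3}\mathcal F+(\sigma_2'-\sigma_{20})\,\partial_{\sigma_2}\mathcal F+\gamma_1\,\nabla_V\mathcal F\cdot\tilde r_1(V_1^{(0)})+\gamma_5\,\tilde r_5(V_2^{(0)}),
\]
all derivatives of $\mathcal F$ taken at $(\sigma_{30},\sigma_{20};V_1^{(0)})$; its coefficient matrix is precisely the one whose determinant is computed and shown to be positive in Lemma \ref{estimate-F}. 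By Cramer's rule, $K_{25}$ equals the determinant obtained from that matrix by replacing the $\tilde r_5(V_2^{(0)})$-column with $\tilde r_1(V_2^{(0)})$, divided by the determinant in \eqref{estimate-det}. I would compute both determinants by the same cofactor expansion as in the proof of Lemma \ref{estimate-F}, using $v_i^{(0)}=0$, $\lambda_1(V_2^{(0)})=-\lambda_5(V_2^{(0)})$, $u_2^{(0)}=u_1^{(0)}e^{\sigma_{20}}$, $\rho_2^{(0)}=\rho_1^{(0)}e^{\sigma_{30}}$, and the pressure matching $p_1^{(0)}=p_2^{(0)}$ (equivalently $\rho_1^{(0)}(c_1^{(0)})^2=\rho_2^{(0)}(c_2^{(0)})^2$); after simplification $K_{25}$ reduces to an explicit rational function of $\lambda_5(V_1^{(0)})$, $\lambda_5(V_2^{(0)})$, $e^{\sigma_{20}}$ and $e^{\sigma_{30}}$ whose denominator is positive and whose numerator has strictly smaller absolute value, so $|K_{25}|<1$. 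I expect this last step --- carrying out the two determinant computations cleanly and then comparing numerator and denominator --- to be the main obstacle, since it is exactly where the defining structure of the contact discontinuity (continuity of pressure and of flow direction, encoded in the form of $\mathcal F$) and the signs of the acoustic eigenvalues must be exploited; this is the \emph{reflection coefficient strictly less than one} estimate flagged in the introduction as the key to closing the modified Glimm-type functional.
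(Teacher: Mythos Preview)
Your strategy matches the paper's: decouple $Z$, invoke the implicit function theorem via Lemma~\ref{lemma-R3}, Taylor-expand to extract the linear part, and compute $K_{25}$ by Cramer's rule against the determinant of Lemma~\ref{estimate-F}, obtaining exactly the ratio
\[
|K_{25}|=\left|\frac{\lambda_5(V_1^{(0)})-\lambda_5(V_2^{(0)})e^{2\sigma_{20}+\sigma_{30}}}{\lambda_5(V_1^{(0)})+\lambda_5(V_2^{(0)})e^{2\sigma_{20}+\sigma_{30}}}\right|<1.
\]

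The one place the paper is tighter is the remainder. Rather than Taylor-expanding the full composition and then arguing by ``bookkeeping in the spirit of Proposition~\ref{prop-in-w}'' that only the $\Delta'(\alpha_5,\boldsymbol\beta^*)$ terms survive, the paper first applies Proposition~\ref{prop-in-w} to the weak interaction of $\alpha_5$ with $\boldsymbol\beta$ above $V_{\widetilde m}:=\mathcal F(\sigma_3,\sigma_2;\tilde\Phi_1(\alpha_1;V_a))$, producing intermediate strengths $\boldsymbol\delta$ with $\delta_i=\beta_i+O(1)\Delta'$ for $i=1,2,3$ and $\delta_5=\alpha_5+\beta_5+O(1)\Delta'$. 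The remaining problem---resolving $\tilde\Phi(\boldsymbol\delta;V_{\widetilde m})$ across the strong contact discontinuity---is then handled \emph{exactly}: when $\delta_1=0$ the identity $\tilde\Phi_3(\delta_3;\tilde\Phi_2(\delta_2;\mathcal F(\sigma_3,\sigma_2;\cdot)))=\mathcal F(\sigma_3+\delta_3,\sigma_2+\delta_2;\cdot)$ gives $(\gamma_1,\sigma_2',\sigma_3',\gamma_5)=(\alpha_1,\sigma_2+\delta_2,\sigma_3+\delta_3,\delta_5)$ identically, so writing $\gamma_5=K_{25}\delta_1+\delta_5$ etc.\ with $K_{2j}$ a mean-value coefficient introduces no further error. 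All of $\Delta'$ thus comes from the first (purely weak) step, and you never have to rule out cross terms involving $\alpha_1$ or $\sigma_j-\sigma_{j0}$. Your direct route would reach the same conclusion, but this two-step decomposition is what makes the specific form of the remainder transparent.
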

\begin{remark}
The essential feature of homogeneous system \eqref{eq-reaction-h} is that the reflection coefficient $K_{25}$ is less than one, which is the stability condition in $\textup{\cite{Tougeron1997,Tougeron1993}}$.
\end{remark}

\begin{proof}
First, it is obvious that $\gamma_4=\alpha_4+\beta_4.$

Then, for any state $V_{\widetilde{m}}\in O_{\epsilon}({V}_2^{(0)})$, we define
\begin{equation}\label{function-delta}
\tilde{\Phi}(\delta_5,\delta_3,\delta_2,\delta_1;V_{\widetilde{m}})=
\tilde{\Phi}(\beta_5,\beta_3,\beta_2,\beta_1;\tilde{\Phi}_5(\alpha_5;V_{\widetilde{m}})).
\end{equation}
By applying Proposition \ref{prop-in-w}, we have
\begin{align}\label{estimate-delta}
&\delta_i=\beta_i+O(1)\Delta'(\alpha_{5},\boldsymbol{\beta}^{*}), \quad i=1,2,3,\nonumber\\
&\delta_5=\alpha_5+\beta_5+O(1)\Delta'(\alpha_{5},\boldsymbol{\beta}^{*}),
\end{align}
where $\Delta'(\alpha_{5},\boldsymbol{\beta}^{*})=|\alpha_5|(|\beta_1|+|\beta_2|+|\beta_3|)+\Delta_5(\alpha_{5},\beta_{5})$.

Let $\boldsymbol{\delta}^{*}=(\delta_5,\delta_3,\delta_2,\delta_1)$.  By \eqref{function-delta}, let us consider the following function:
\begin{align*}
&\varphi_d(\gamma_5,\sigma'_{3},\sigma'_{2},\gamma_1,\boldsymbol{\delta}^{*},\sigma_{3},\sigma_{2},\alpha_1)\\
=&\tilde{\Phi}_5(\gamma_5;\mathcal{F}(\sigma'_3,\sigma'_2;\tilde{\Phi}_1(\gamma_1;V_a)))
-\tilde{\Phi}(\beta_5,\beta_3,\beta_2,\beta_1;\tilde{\Phi}_5(\alpha_5;\mathcal{F}(\sigma_3,\sigma_2;\tilde{\Phi}_1(\alpha_1;V_a))))\\
=&\tilde{\Phi}_5(\gamma_5;\mathcal{F}(\sigma'_3,\sigma'_2;\tilde{\Phi}_1(\gamma_1;V_a)))
-\tilde{\Phi}(\delta_5,\delta_3,\delta_2,\delta_1;\mathcal{F}(\sigma_3,\sigma_2;\tilde{\Phi}_1(\alpha_1;V_a))).
\end{align*}

It is clear that $\varphi_d(0,\sigma_{30},\sigma_{20},0,\boldsymbol{0},\sigma_{30},\sigma_{20},0)=0$. By \eqref{estimate-det}, we have
\begin{align*}
&\det\Big(\frac{\partial\varphi_d(\gamma_5,\sigma'_{3},\sigma'_{2},\gamma_1,\boldsymbol{\delta}^{*},\sigma_{3},\sigma_{2},\alpha_1)}
{\partial(\gamma_5,\sigma'_{3},\sigma'_{2},\gamma_1)}\Big)|_{\{\gamma_1=\gamma_5=0,\sigma'_{3}=\sigma_{30},\sigma'_{2}=
\sigma_{20},V_a={V}_1^{(0)}\}}\\
&=\det(\tilde{r}_5({V}_2^{(0)}),\partial_{\sigma_3}\mathcal{F}(\sigma_{30},\sigma_{20};{V}_1^{(0)}),\partial_{\sigma_2}
\mathcal{F}(\sigma_{30},\sigma_{20};{V}_1^{(0)}),\nabla_{V}\mathcal{F}(\sigma_{30},\sigma_{20};{V}_1^{(0)})\cdot\tilde{r}_1({V}_1^{(0)}))>0.
\end{align*}
 Then it follows from the implicit function theorem that $\gamma_i,i=1,5,$ and $\sigma'_{j}, j=2,3,$ can be solved as a $C^2$ function of $\gamma_5,\sigma'_{3},\sigma'_{2},\gamma_1,\boldsymbol{\delta}^{*},\sigma_{3},\sigma_{2},\alpha_1$, and $V_a$. Thus,
we have
\begin{align*}
\sigma'_j&=\sigma'_j(\delta_5,\delta_3,\delta_2,\delta_1,\sigma_{3},\sigma_{2},\alpha_1)
-\sigma'_j(\delta_5,\delta_3,\delta_2,0,\sigma_{3},\sigma_{2},\alpha_1)+\sigma'_j(\delta_5,\delta_3,\delta_2,0,\sigma_{3},\sigma_{2},\alpha_1)\\
&=K_{2j}\delta_1+\delta_j+\sigma_j,\quad j=2,3,
\end{align*}
Similarly, it holds that
$$
\gamma_1=K_{21}\delta_1+\alpha_1,\qquad\mbox{ and }\qquad\gamma_5=K_{25}\delta_1+\delta_5.
$$
Then by \eqref{estimate-delta}, we can obtain \eqref{estimate-in-c1}.

Differentiating the equation $\varphi_d=0$ with respect to $\delta_1$, and letting $\boldsymbol{\delta}^{*}=\alpha_1=0,\sigma_{3}=\sigma_{30},\sigma_{2}=\sigma_{20},$ and $U_a={U}_1^{(0)}$, we have
\begin{align*}
\partial_{\delta_1}\gamma_5\tilde{r}_5({V}_2^{(0)})&+\partial_{\delta_1}\sigma'_{3}\partial_{\sigma_3}\mathcal{F}(\sigma_{30},\sigma_{20};{V}_1^{(0)})
+\partial_{\delta_1}\sigma'_{2}\partial_{\sigma_2}\mathcal{F}(\sigma_{30},\sigma_{20};{V}_1^{(0)})\\
&+\partial_{\delta_1}\gamma_1\nabla_{V}\mathcal{F}(\sigma_{30},\sigma_{20};{V}_1^{(0)})\cdot\tilde{r}_1({V}_1^{(0)})=\tilde{r}_1({V}_2^{(0)}).
\end{align*}
It is clear that $K_{2i},i=1,2,3$ are bounded. By \eqref{estimate-det} and Lemma \ref{A.1}, it holds that
\begin{align*}
|\partial_{\delta_1}\gamma_5|
&=\Bigg|\frac{\det(\tilde{r}_1({V}_2^{(0)}),\partial_{\sigma_3}\mathcal{F}(\sigma_{30},\sigma_{20};{V}_1^{(0)}),\partial_{\sigma_2}
\mathcal{F}(\sigma_{30},\sigma_{20};{V}_1^{(0)}),\nabla_{V}\mathcal{F}(\sigma_{30},\sigma_{20};{V}_1^{(0)})\cdot\tilde{r}_1({V}_1^{(0)}))}
{\det(\tilde{r}_5({V}_2^{(0)}),\partial_{\sigma_3}\mathcal{F}(\sigma_{30},\sigma_{20};{V}_1^{(0)}),
\partial_{\sigma_2}\mathcal{F}(\sigma_{30},\sigma_{20};{V}_1^{(0)}),
\nabla_{V}\mathcal{F}(\sigma_{30},\sigma_{20};{V}_1^{(0)})\cdot\tilde{r}_1({V}_1^{(0)}))}\Bigg|\\
&=\Bigg|\frac{\kappa_{1}({V}_1^{(0)})\kappa_{1}({V}_2^{(0)})({\rho}_{1}^{(0)})^{2}({u}_{1}^{(0)})^{2}e^{\sigma_{20}+\sigma_{30}}(\lambda_{5}({V}_1^{(0)})
-\lambda_{5}({V}_2^{(0)})e^{2\sigma_{20}+\sigma_{30}})}
{\kappa_{1}({V}_1^{(0)})\kappa_{5}({V}_2^{(0)})({\rho}_{1}^{(0)})^{2}({u}_{1}^{(0)})^{2}e^{\sigma_{20}+\sigma_{30}}(\lambda_{5}({V}_1^{(0)})+
\lambda_{5}({V}_2^{(0)})e^{2\sigma_{20}+\sigma_{30}})}\Bigg|\\
&=\Bigg|\frac{\lambda_{5}({V}_1^{(0)})-
\lambda_{5}({V}_2^{(0)})e^{2\sigma_{20}+\sigma_{30}}}{\lambda_{5}({V}_1^{(0)})+
\lambda_{5}({V}_2^{(0)})e^{2\sigma_{20}+\sigma_{30}}}\Bigg|<1.
\end{align*}
This completes the proof.
\end{proof}

The second case is that the weak waves approach the strong contact discontinuity from the below (Fig. \ref{6}). By the symmetry, we can easily obtain the following proposition.
\begin{proposition}\label{prop-in-c2}
For any given three constant states $U_a,U_m\in O_{\epsilon}({U}_1^{(0)})$, and $U_b\in O_{\epsilon}({U}_2^{(0)})$ with the assumptions that
\begin{align*}
&V_m=\tilde{\Phi}(\alpha_5,\alpha_3,\alpha_2,\alpha_1;V_a))),\quad Z_m=Z_a+\alpha_4,\\
&V_b=\tilde{\Phi}_5(\beta_5;\mathcal{F}(\sigma_3,\sigma_2;\tilde{\Phi}_1(\beta_1;V_m))),\quad Z_b=Z_m+\beta_4,\\
&V_b=\tilde{\Phi}_5(\gamma_5;\mathcal{F}(\sigma'_3,\sigma'_2;\tilde{\Phi}_1(\gamma_1;V_a))),\quad Z_b=Z_a+\gamma_4,
\end{align*}
it holds that
\begin{equation}\label{estimate-in-c2}
\begin{cases}
&\gamma_1=K_{11}\alpha_5+\alpha_1+\beta_1+O(1)\Delta''(\boldsymbol{\alpha}^{*},\beta_{1}),\\
&\sigma'_{i}=K_{1i}\alpha_5+\alpha_i+\sigma_i+O(1)\Delta''(\boldsymbol{\alpha}^{*},\beta_{1}),i=2,3,\\
&\gamma_4=\alpha_4+\beta_4,\\
&\gamma_5=K_{15}\alpha_5+\beta_5+O(1)\Delta''(\boldsymbol{\alpha}^{*},\beta_{1}),
\end{cases}
\end{equation}
where $\Delta''(\boldsymbol{\alpha}^{*},\beta_{1})
=|\beta_1|(|\alpha_5|+|\alpha_3|+|\alpha_2|)+\Delta_1(\alpha_{1},\beta_{1})$.
\end{proposition}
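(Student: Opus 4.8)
The plan is to run the proof of Proposition~\ref{prop-in-c1} essentially verbatim, interchanging the two sides of the strong contact discontinuity; since the paper already remarks that this follows by symmetry, I do not expect a genuine obstacle, only some orientation bookkeeping. The point to keep in mind is that the weak fan $\tilde\Phi(\alpha_5,\alpha_3,\alpha_2,\alpha_1;V_a)$ now sits \emph{below} the strong contact, so it is its weak $5$-wave component (slope $\approx\lambda_5({V}_1^{(0)})>0$, exceeding the slope $\approx 0$ of the contact) that catches up with the strong contact and generates the reflected and transmitted waves; this is why in \eqref{estimate-in-c2} the coefficients $K_{1i}$ multiply $\alpha_5$ rather than $\beta_1$. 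In contrast with Proposition~\ref{prop-in-c1}, no strict bound on a reflection coefficient is asserted or needed here, because the $U_1$-region is not enclosed by a wall.

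First, since $Z$ is a passive scalar for the homogeneous system, $Z_m=Z_a+\alpha_4$, $Z_b=Z_m+\beta_4$ and $Z_b=Z_a+\gamma_4$ give $\gamma_4=\alpha_4+\beta_4$ at once. For the $V$-components, read upward from $V_a$ the incoming configuration consists of the weak fan $(\alpha_1,\alpha_2,\alpha_3,\alpha_5)$, then the weak $1$-wave $\beta_1$, then the strong contact $\mathcal{F}(\sigma_3,\sigma_2)$, then the weak $5$-wave $\beta_5$. I would first absorb $\beta_1$ into the $\alpha$-fan by Proposition~\ref{prop-in-w}: defining $\delta$ through $\tilde\Phi(\delta_5,\delta_3,\delta_2,\delta_1;V_a)=\tilde\Phi_1(\beta_1;\tilde\Phi(\alpha_5,\alpha_3,\alpha_2,\alpha_1;V_a))$ (the interaction of the $\alpha$-fan with a pure $1$-wave) yields $\delta_1=\alpha_1+\beta_1+O(1)\Delta''$, $\delta_i=\alpha_i+O(1)\Delta''$ for $i=2,3$, and $\delta_5=\alpha_5+O(1)\Delta''$, where the Glimm interaction quantity collapses exactly to $\Delta''(\boldsymbol{\alpha}^{*},\beta_1)=|\beta_1|(|\alpha_5|+|\alpha_3|+|\alpha_2|)+\Delta_1(\alpha_1,\beta_1)$ because the second fan carries only a $1$-wave. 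The task is then to resolve $V_b=\tilde\Phi_5(\beta_5;\mathcal{F}(\sigma_3,\sigma_2;\tilde\Phi(\delta_5,\delta_3,\delta_2,\delta_1;V_a)))$ as $V_b=\tilde\Phi_5(\gamma_5;\mathcal{F}(\sigma'_3,\sigma'_2;\tilde\Phi_1(\gamma_1;V_a)))$.

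For this I would apply the implicit function theorem to
\[
\varphi(\gamma_5,\sigma'_3,\sigma'_2,\gamma_1)=\tilde\Phi_5(\gamma_5;\mathcal{F}(\sigma'_3,\sigma'_2;\tilde\Phi_1(\gamma_1;V_a)))-\tilde\Phi_5(\beta_5;\mathcal{F}(\sigma_3,\sigma_2;\tilde\Phi(\delta_5,\delta_3,\delta_2,\delta_1;V_a))),
\]
which vanishes at $(\gamma_5,\sigma'_3,\sigma'_2,\gamma_1)=(0,\sigma_{30},\sigma_{20},0)$ when $\delta=\beta_5=0$, $\sigma_j=\sigma_{j0}$ and $V_a={V}_1^{(0)}$, and whose Jacobian in $(\gamma_5,\sigma'_3,\sigma'_2,\gamma_1)$ there is precisely the (positive) determinant in \eqref{estimate-det}; hence $\gamma_5,\sigma'_3,\sigma'_2,\gamma_1$ are $C^2$ functions of $(\delta_5,\delta_3,\delta_2,\delta_1,\beta_5,\sigma_3,\sigma_2,V_a)$. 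The key observation, mirroring Proposition~\ref{prop-in-c1}, is that when $\delta_5=0$ the equation $\varphi=0$ has the explicit solution $\gamma_1=\delta_1$, $\sigma'_j=\sigma_j+\delta_j$ $(j=2,3)$, $\gamma_5=\beta_5$: the weak $2$- and $3$-contacts commute with $\mathcal{F}$ and merge into it additively, $\mathcal{F}(\sigma_3,\sigma_2;\tilde\Phi_3(\delta_3;\tilde\Phi_2(\delta_2;W)))=\mathcal{F}(\sigma_3+\delta_3,\sigma_2+\delta_2;W)$, while a weak $1$-wave below the contact does not reach it. Therefore each of $\gamma_1-\delta_1$, $\sigma'_j-\sigma_j-\delta_j$ and $\gamma_5-\beta_5$ equals $\delta_5$ times a bounded $C^1$ function, which I would name $K_{11}$, $K_{1j}$ and $K_{15}$ respectively. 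Substituting the first-step expressions for $\delta_1,\delta_2,\delta_3,\delta_5$, using $\gamma_4=\alpha_4+\beta_4$, and absorbing $K_{1i}\cdot O(1)\Delta''$ into $O(1)\Delta''$ by the boundedness of $K_{1i}$, one obtains exactly \eqref{estimate-in-c2}.

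I expect the whole argument to be mechanical given Proposition~\ref{prop-in-c1} and Lemma~\ref{estimate-F}; the one place demanding attention is precisely the orientation bookkeeping just described — tracking that $\delta_5$, and not a weak $1$-wave, is the component meeting the strong contact from below, and verifying that the Glimm interaction quantity of Proposition~\ref{prop-in-w} genuinely reduces to $\Delta''(\boldsymbol{\alpha}^{*},\beta_1)$. If one wanted, $K_{15}$ could be computed at the base point from $\varphi=0$ by Cramer's rule, exactly as $|K_{25}|<1$ was obtained in Proposition~\ref{prop-in-c1}, but here only the boundedness of the $K_{1i}$ is used.
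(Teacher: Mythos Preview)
Your proposal is correct and is precisely the symmetric counterpart of the paper's proof of Proposition~\ref{prop-in-c1}; the paper itself omits details and simply writes ``by the symmetry, we can easily obtain the following proposition.'' You have in fact spelled out the orientation bookkeeping (first merging $\beta_1$ into the $\alpha$-fan so that the Glimm quantity collapses to $\Delta''$, then peeling off $\delta_5$ via the implicit function theorem with the same Jacobian \eqref{estimate-det}) more explicitly than the paper does.
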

\begin{figure}[ht]
\begin{center}
\setlength{\unitlength}{1mm}
\begin{picture}(50,68)(1,2)
\linethickness{1pt}
%\multiput(0,0)(0,3){24}{\line(0,2){2}}
%\multiput(26,0)(0,3){24}{\line(0,2){2}}
%\multiput(52,0)(0,3){24}{\line(0,2){2}}
\put(0,0){\line(0,1){72}}
\put(26,0){\line(0,1){72}}
\put(52,0){\line(0,1){72}}
\put(0,54){\line(3,2){20}}
\put(0,54){\line(3,-1){20}}
\thicklines
\put(0,54){\line(5,2){20}}
\thinlines
\put(0,18){\line(4,3){20}}
\put(0,18){\line(5,2){20}}
\put(0,18){\line(4,-1){20}}
\put(26,40){\line(2,-1){20}}
\put(26,40){\line(3,2){20}}
\thicklines
\put(26,40){\line(5,1){20}}
\thinlines
\put(2,68){$U_b$}
\put(2,35){$U_m$}
\put(2,5){$U_a$}
\put(36,18){$U_a$}
\put(36,60){$U_b$}
\put(12,66){$\beta_5$}
\put(12,56){$\sigma_{2(3)},\beta_{4}$}
\put(12,46){$\beta_1$}
\put(12,34){$\alpha_5$}
\put(12,20){$\alpha_{2(3,4)}$}
\put(12,11){$\alpha_1$}
\put(40,54){$\gamma_5$}
\put(38,39){$\sigma'_{2(3)},\gamma_{4}$}
\put(40,29){$\gamma_1$}
\end{picture}
\end{center}
\caption{Weak waves approach the strong contact discontinuity from below.}\label{6}
\end{figure}

\subsection{Local estimates on the reacting step}
Let $\tilde{U}=(\tilde{u},\tilde{v},\tilde{p},\tilde{\rho},\tilde{Z})^\top$ be the value of $U$ after the reaction. It means that $\tilde{U}$ satisfies the equation
\[
W(\tilde{U})=W(U)+G(U)h,
\]
which is precisely of the following form
\begin{equation}\label{equation-Re}
\begin{cases}
&\tilde{\rho}\tilde{u}=\rho u,\\
&\tilde{\rho}\tilde{u}^2+\tilde{p}=\rho u^2+p,\\
&\tilde{\rho}\tilde{u}\tilde{v}=\rho uv,\\
&(\tilde{\rho}\tilde{E}+\tilde{p})\tilde{u}=(\rho E+p)u+q_0\rho\phi(T)Zh,\\
&\tilde{\rho}\tilde{u}\tilde{Z}=\rho u Z-\rho\phi(T)Zh.
\end{cases}
\end{equation}

Then we have the following property that indicates the change of the solutions $\tilde{U}$ with respect to $h$.
\begin{lemma}\label{lemma-Re}
Suppose that $0\leq Z\leq 1$ and $T \ge T_0$ for some positive constant $T_0$, then there exists a constant $l>0$, such that for sufficiently small $h>0$, it holds that
\[
\tilde{T}\ge T \ge T_0>0,\quad \tilde{V}-V=O(1)Zh,\quad 0\leq\tilde{Z}\le e^{-lh}Z\leq 1,
\]
where $\tilde{V}=(\tilde{u},\tilde{v},\tilde{p},\tilde{\rho})^\top$, and $V=(u,v,p,\rho)^\top$.
\end{lemma}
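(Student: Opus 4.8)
The plan is to analyze the algebraic system \eqref{equation-Re} directly. The key observation is that the first three equations are decoupled from the reaction: they give $\tilde\rho\tilde u=\rho u=:m$, $\tilde\rho\tilde u^2+\tilde p=\rho u^2+p=:P$, and $\tilde v/\tilde u = v/u$ (from dividing the third by the first). Thus the reacted state lies on the curve obtained by freezing the mass flux, the momentum flux, and the flow angle, so $(\tilde u,\tilde v,\tilde p,\tilde\rho)$ is determined by a single scalar parameter once $\tilde Z$ (equivalently the reacted energy) is known. I would first solve the last equation of \eqref{equation-Re} for $\tilde Z$: writing it as $m\tilde Z = mZ - \rho\phi(T)Zh$, i.e.\ $\tilde Z = Z\big(1 - \tfrac{\rho\phi(T)}{m}h\big) = Z\big(1-\tfrac{\phi(T)}{u}h\big)$. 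Since $u>c>0$ stays bounded away from $0$ on the relevant neighborhood, and $\phi(T)$ admits a positive lower bound $\phi_0>0$ by (H3) and the Remark (as $T\ge T_0$), one has $1-\tfrac{\phi(T)}{u}h \le 1-l_0 h$ for some $l_0>0$ and all small $h$; comparing with $e^{-l h}$ for a slightly smaller $l>0$ (using $1-l_0h\le e^{-l h}$ for $h$ small) gives $0\le\tilde Z\le e^{-lh}Z\le 1$, with $\tilde Z\ge 0$ because the factor is positive for small $h$ and $Z\ge 0$.

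Next I would set up the reduction to a scalar equation for the remaining unknowns. Parametrize the reacted state by $\tilde u$ (or by $\tilde p$): from $\tilde\rho=m/\tilde u$ and $\tilde p = P - m\tilde u$, everything except the energy balance is expressed in $\tilde u$; substituting into the fourth equation of \eqref{equation-Re}, using $\tilde E = \tilde e + \tfrac12(\tilde u^2+\tilde v^2)+q_0\tilde Z$ with $\tilde e = \tfrac{\tilde p}{(\gamma-1)\tilde\rho}$ and the already-known $\tilde Z$ and $\tilde v = (v/u)\tilde u$, yields a single scalar equation $\Psi(\tilde u; h) = \text{(data)} + q_0\rho\phi(T)Zh$, where $\Psi(\cdot;0)$ evaluated at the unreacted $u$ reproduces the left side at $h=0$. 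I would check that $\partial_{\tilde u}\Psi$ at $h=0$, $\tilde u=u$ is nonzero on the supersonic branch $u>c$ — this is the standard fact that the energy flux is monotone along the frozen-flux curve for supersonic states (it is essentially the same non-degeneracy that underlies the Riemann solvability in Lemma \ref{lemma-R1}). By the implicit function theorem $\tilde u$ is then a $C^1$ function of $(h, U)$ with $\tilde u|_{h=0}=u$, and $\partial_h \tilde u|_{h=0} = O(1)\,Z$ because the only $h$-dependence on the right side and through $\tilde Z$ carries a factor $Z$. Hence $\tilde u - u = O(1)Zh$, and then $\tilde p - p$, $\tilde\rho-\rho$, $\tilde v - v$ are all $O(1)Zh$ by the explicit formulas above; this is exactly $\tilde V - V = O(1)Zh$.

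For the temperature monotonicity $\tilde T \ge T$, I would argue from the energy equation rather than from the perturbation expansion, so that it holds for all admissible $h$ in the range, not merely to first order. Using $p=R\rho T$, $e=c_vT$, the fourth equation says the stagnation enthalpy flux increases by $q_0\rho\phi(T)Zh\ge 0$; combined with the conservation of $m$, $P$, and the flow angle, one gets along the frozen-flux curve that increasing the (chemical-energy-adjusted) total enthalpy forces the thermodynamic state to move in the direction of increasing $T$ on the supersonic branch — concretely, one can write $c_p\tilde T + \tfrac12(\tilde u^2+\tilde v^2) = c_p T + \tfrac12(u^2+v^2) + q_0(Z - \tilde Z) + \text{(terms that vanish)}$ after using $\tilde p + m\tilde u = P$ to eliminate pressure, and then show the map $\tilde u\mapsto c_p\tilde T(\tilde u) + \tfrac12(1+(v/u)^2)\tilde u^2$ together with the sign of $q_0(Z-\tilde Z)\ge 0$ pins down $\tilde T\ge T$. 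I expect the main obstacle to be precisely this last step: carefully verifying the monotonicity/sign structure on the supersonic side so that $\tilde T\ge T$ (not just $\tilde T = T + O(1)Zh$) holds uniformly, since a naive first-order expansion only controls the sign for small $h$ and one wants $\tilde T \ge T \ge T_0$ to propagate the hypothesis $T\ge T_0$ through arbitrarily many reacting steps. Everything else — the $\tilde Z$ estimate and the $O(1)Zh$ bounds — is a routine application of the implicit function theorem once the non-degeneracy of the energy flux on the supersonic branch is recorded.
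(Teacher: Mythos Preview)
Your approach is essentially the paper's: solve $\tilde Z$ explicitly from the last equation of \eqref{equation-Re}, apply the implicit function theorem to the remaining equations to get $\tilde V - V = O(1)Zh$, and read off $\tilde T - T$ to leading order. Two small corrections. First, dividing the third equation of \eqref{equation-Re} by the first gives $\tilde v = v$ exactly (since $\tilde\rho\tilde u=\rho u$), not $\tilde v/\tilde u = v/u$; this actually simplifies your scalar reduction. Second, your concern about $\tilde T\ge T$ being ``only first order'' is unnecessary here: the lemma is stated for sufficiently small $h$, and the paper simply computes
\[
\tilde T - T = \frac{(\gamma-1)(u^2 - RT)}{R\rho u\,(u^2-\gamma RT)}\,q_0\rho\phi(T)Z\,h + O(h^2),
\]
whose leading coefficient is strictly positive on the supersonic branch (because $u^2>\gamma RT>RT$). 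Since the entire perturbation enters through the source $G(U)h$, the remainder is in fact $O((Zh)^2)$, so $\tilde T - T = Zh\bigl(C + O(Zh)\bigr)\ge 0$ uniformly for $Z\in[0,1]$ and small $h$; no finer monotonicity argument is needed.
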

\begin{proof}
By $\eqref{equation-Re}_1$ and $\eqref{equation-Re}_2$, we have that
\begin{equation}
\tilde{u}-u=-\frac{1}{\rho u}(\tilde{p}-p).
\end{equation}
By $\eqref{equation-Re}_1$ and $\eqref{equation-Re}_3$, we have that
\begin{equation}
\tilde{v}=v.
\end{equation}
By $\eqref{equation-Re}_1$ and $\eqref{equation-Re}_5$, we know that
\begin{equation}\label{eq-Z}
\tilde{Z}=(1-\frac {\phi(T)}{u}h)Z.
\end{equation}
Moreover, $\eqref{equation-Re}_1$ also means that
\begin{equation}
\tilde{\rho}-\rho=-\frac{\rho}{u}(\tilde{u}-u)+O(h^2).
\end{equation}

Note that by the thermodynamical relation, we know that $T=\frac{\gamma-1}{R}e=\frac{p}{R\rho}$.
Then by the assumption $u^2>c^2=\gamma RT$ and from all the above identities and $\eqref{equation-Re}_4$, we have that
\[
\tilde{T}-T=\frac{(\gamma -1)(u^2-RT)}{R\rho u(u^2-\gamma RT)}q_0\rho\phi(T)Zh+O(h^2)\geq 0,
\]
which shows that the temperature $T$ does not decrease due to the reaction.

Next, \eqref{eq-Z} also means that $0\leq\tilde{Z}\leq 1.$ Since $\phi(T)$ is assumed to be Lipschitz continuous, nonnegative, and increasing, there exists a constant $l > 0$, such that $\tilde{Z}\leq e^{-lh}Z,$ which implies the decay property of the reactant $Z$.

Finally, we will consider the change of $V=(u,v,p,\rho)^\top$. It follows from the implicit function theorem that $\tilde{V}=(\tilde{u},\tilde{v},\tilde{p},\tilde{\rho})^\top$ can be solved as a $C^2$ function of $V$ and $Zh$ by the first four equations of \eqref{equation-Re}. By the Taylor expansion, one can easily see that there exists a function $\widetilde{\mathcal{V}}$ such that
\begin{equation}\label{eq-V}
\tilde{V}=V+\widetilde{\mathcal{V}}(V,Zh)Zh.
\end{equation}
This completes the proof.
\end{proof}

In sequel, if $\tilde{V}$ and $V$ satisfy (\ref{eq-V}), and if $\tilde{Z}$ and $Z$ satisfy (\ref{eq-Z}), then we say $\tilde{U}=(\tilde{V},\tilde{Z})$ is the value of $U=(V,Z)$ after the reaction step.

Now, we are going to consider the change of the wave strength after the reaction step. The analysis is divided into the following three cases.

\smallskip
{\bf Case 1.} $U_a$ and $U_b$ are connected only by the weak waves.
\begin{proposition}\label{prop-Re1}
Let $U_a,U_b\in O_{\epsilon}({U}_k^{(0)}),k=1,2$ with
\begin{align*}
V_b=\tilde{\Phi}(\gamma_5,\gamma_3,\gamma_2,\gamma_1;V_a),\quad Z_b=Z_a+\gamma_4.
\end{align*}
Let $\tilde{U}_a=(\tilde{V}_a,\tilde{Z}_a)$ and $\tilde{U}_b=(\tilde{V}_b,\tilde{Z}_b)$ be the value of $U_a$ and $U_b$ after the reaction step respectively. Assume that
\begin{align*}
\tilde{V}_b=\tilde{\Phi}(\tilde{\gamma}_5,\tilde{\gamma}_3,\tilde{\gamma}_2,\tilde{\gamma}_1;\tilde{V}_a),\quad \tilde{Z}_b=\tilde{Z}_a+\tilde{\gamma}_4.
\end{align*}
Then it holds that
\begin{equation}\label{estimate-Re1}
\begin{cases}
&\tilde{\gamma}_i=\gamma_i+O(1)|\boldsymbol{\gamma}^{*}|Z_{a}h+O(1)|\gamma_4|h,\quad i=1,2,3,5,\\
&\tilde{\gamma}_4=(1-{\phi(T_b)h}/{u_b})\gamma_4+O(1)|\boldsymbol{\gamma}^{*}|Z_{a}h.
\end{cases}
\end{equation}
where $|\boldsymbol{\gamma}^{*}|=|\gamma_1|+|\gamma_2|+|\gamma_3|+|\gamma_5|$.
\end{proposition}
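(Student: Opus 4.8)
The plan is to compare the two Riemann problems — the one connecting $U_a$ to $U_b$ before the reaction and the one connecting $\tilde U_a$ to $\tilde U_b$ afterwards — and to track how the solution of the former is perturbed when the endpoint data are moved by the reaction step. First I would recall from Lemma \ref{lemma-R1} that, since $U_a,U_b\in O_\epsilon(U_k^{(0)})$, the wave strengths $(\gamma_5,\gamma_3,\gamma_2,\gamma_1)$ are determined by the $C^2$ map $V_b=\tilde\Phi(\gamma_5,\gamma_3,\gamma_2,\gamma_1;V_a)$, which has invertible Jacobian in $(\gamma_5,\gamma_3,\gamma_2,\gamma_1)$ at the background state (the right eigenvectors $\tilde r_1,\tilde r_2,\tilde r_3,\tilde r_5$ are linearly independent). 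Hence $(\gamma_5,\gamma_3,\gamma_2,\gamma_1)=\Psi(V_a,V_b)$ for a $C^1$ function $\Psi$, and likewise $(\tilde\gamma_5,\tilde\gamma_3,\tilde\gamma_2,\tilde\gamma_1)=\Psi(\tilde V_a,\tilde V_b)$. The $Z$-components are explicit: $\gamma_4=Z_b-Z_a$ and $\tilde\gamma_4=\tilde Z_b-\tilde Z_a$.

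Next I would use Lemma \ref{lemma-Re}, and in particular the representation \eqref{eq-V}, which gives $\tilde V_a=V_a+\widetilde{\mathcal V}(V_a,Z_a h)\,Z_a h$ and $\tilde V_b=V_b+\widetilde{\mathcal V}(V_b,Z_b h)\,Z_b h$. Therefore
\[
\tilde V_a-V_a=O(1)Z_a h,\qquad \tilde V_b-V_b=O(1)Z_b h .
\]
Since $|U_b-U_a|\le C\sum_i|\gamma_i|$ by Lemma \ref{lemma-R1}, we have $|Z_b-Z_a|\le C|\boldsymbol\gamma^*|+|\gamma_4|$ and thus $Z_b h=Z_a h+O(1)(|\boldsymbol\gamma^*|+|\gamma_4|)h$. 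Applying the mean value theorem to the $C^1$ map $\Psi$ between the arguments $(V_a,V_b)$ and $(\tilde V_a,\tilde V_b)$,
\[
\tilde\gamma_i-\gamma_i=O(1)\big(|\tilde V_a-V_a|+|\tilde V_b-V_b|\big)=O(1)\big(Z_a h+Z_b h\big)=O(1)|\boldsymbol\gamma^*|Z_a h+O(1)|\gamma_4|h,
\]
for $i=1,2,3,5$, which is the first line of \eqref{estimate-Re1}; here I absorb the pure $Z_a h$ contribution into the $|\boldsymbol\gamma^*|Z_a h$ term after noting that terms not multiplied by a small wave strength must cancel — more precisely, when $\boldsymbol\gamma^*=0$ and $\gamma_4=0$ one has $U_a=U_b$, hence $\tilde U_a=\tilde U_b$ and all $\tilde\gamma_i=0$, so the remainder indeed vanishes to the stated order. (This last point — that the zeroth-order part of the perturbation cancels, leaving a bound proportional to the wave strengths — is the one place a little care is needed; it follows by writing $\tilde\gamma_i-\gamma_i$ as an integral of $D\Psi$ against $(\tilde V_a-V_a,\tilde V_b-V_b)$ and noting $\widetilde{\mathcal V}(V_a,0)Z_a h=O(1)Z_a h$ while the difference of the two $\widetilde{\mathcal V}$-terms is $O(1)|U_b-U_a|\,h=O(1)|\boldsymbol\gamma^*+\gamma_4|\,h$.)

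For the reactant component, I would simply subtract the two instances of \eqref{eq-Z}: $\tilde Z_b=(1-\phi(T_b)h/u_b)Z_b$ and $\tilde Z_a=(1-\phi(T_a)h/u_a)Z_a$, so
\[
\tilde\gamma_4=\tilde Z_b-\tilde Z_a=\Big(1-\frac{\phi(T_b)h}{u_b}\Big)(Z_b-Z_a)+\Big(\frac{\phi(T_b)}{u_b}-\frac{\phi(T_a)}{u_a}\Big)h\,Z_a .
\]
The first term is $(1-\phi(T_b)h/u_b)\gamma_4$, and in the second term the factor $\phi(T_b)/u_b-\phi(T_a)/u_a$ is $O(1)|U_b-U_a|=O(1)|\boldsymbol\gamma^*|+O(1)|\gamma_4|$ by Lipschitz continuity of $\phi$ and smoothness of the flow quantities; the $O(1)|\gamma_4|\,h\,Z_a$ piece is of higher order and can be merged, giving $\tilde\gamma_4=(1-\phi(T_b)h/u_b)\gamma_4+O(1)|\boldsymbol\gamma^*|Z_a h$, which is the second line of \eqref{estimate-Re1}. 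The main obstacle, as indicated above, is bookkeeping the cancellation of the non-small part of $\tilde V-V$ so that every error term carries a genuine wave-strength factor; once that is handled the rest is a routine application of the implicit function theorem and Taylor expansion, uniform over $O_\epsilon(U_k^{(0)})$ for small $\epsilon$ and $h$.
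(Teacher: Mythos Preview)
Your argument is correct and follows essentially the same route as the paper. The treatment of $\tilde\gamma_4$ is identical. For $\tilde\gamma_i$ with $i\neq4$, the paper parametrizes $\tilde\gamma_i$ directly as a $C^2$ function of $(\boldsymbol{\gamma}^{*},Z_ah,Z_bh,V_a)$ (treating $Z_ah$ and $Z_bh$ as independent variables) and then uses the telescoping identity
\[
\tilde\gamma_i(\boldsymbol{\gamma}^{*},Z_ah,Z_ah,V_a)
=\tilde\gamma_i(\boldsymbol{\gamma}^{*},0,0,V_a)+\tilde\gamma_i(0,Z_ah,Z_ah,V_a)-\tilde\gamma_i(0,0,0,V_a)+O(1)|\boldsymbol{\gamma}^{*}|Z_ah,
\]
together with the observation $\tilde\gamma_i(0,Z_ah,Z_ah,V_a)=0$, to isolate the cross term; the passage from $Z_bh$ to $Z_ah$ contributes the $O(1)|\gamma_4|h$. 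Your version packages the same cancellation through the inverse map $\Psi$ and the fact $\Psi(V,V)=0$, which is exactly the statement $\tilde\gamma_i(0,\zeta,\zeta,V_a)=0$ in the paper's coordinates. The one place your write-up is loose is the line ``$O(1)(Z_ah+Z_bh)=O(1)|\boldsymbol\gamma^*|Z_ah+O(1)|\gamma_4|h$'', which is not an equality as written; but your parenthetical correctly identifies and resolves the issue, so the argument stands.
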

\begin{proof}
By \eqref{eq-Z}, it is obvious that
$
\tilde{Z}_b=(1-{\phi(T_b)}h/{u_b})Z_b,$ and $ \tilde{Z}_a=(1-{\phi(T_a)}h/{u_a})Z_a.
$
Hence we have
$
\tilde{\gamma}_4=(1-{\phi(T_b)h}/{u_b})\gamma_4+({\phi(T_a)}/{u_a}-{\phi(T_b)}/{u_b})Z_{a}h,
$
which implies $\eqref{estimate-Re1}_2$.

Next, by \eqref{eq-V}, we need to find the solution $\tilde{\boldsymbol{\gamma}}^{*}$ as a function of $\boldsymbol{\gamma}^{*},Z_{a}h,Z_{b}h,$ and $V_a$ such that
\[
\tilde{\Phi}(\tilde{\boldsymbol{\gamma}}^{*};V_a+\widetilde{\mathcal{V}}(V_a,Z_{a}h)Z_{a}h)
=\tilde{\Phi}(\boldsymbol{\gamma}^{*};V_a)
+\widetilde{\mathcal{V}}(V_b,Z_{b}h)Z_{b}h,
\]
where $\tilde{\boldsymbol{\gamma}}^{*}=(\tilde{\gamma}_5,\tilde{\gamma}_3,\tilde{\gamma}_2,\tilde{\gamma}_1)$, and $\boldsymbol{\gamma}^{*}=(\gamma_5,\gamma_3,\gamma_2,\gamma_1)$.

First, it follows from the implicit function theorem that $\tilde{\gamma}_{i},i=1,2,3,5$ can be solved as a $C^2$-function of $(\boldsymbol{\gamma}^{*},Z_{a}h,Z_{b}h, V_a)$ uniquely. Then, we have\begin{align*}
\tilde{\gamma}_{i}(\boldsymbol{\gamma}^{*},Z_{a}h,Z_{b}h,V_a)&=O(1)|Z_a-Z_b|h+\tilde{\gamma}_{i}
(\boldsymbol{\gamma}^{*},Z_{a}h,Z_{a}h,V_a)\\
&=O(1)|Z_a-Z_b|h+O(1)|\boldsymbol{\gamma}^{*}|Z_{a}h+\tilde{\gamma}_{i}(\boldsymbol{\gamma}^{*},0,0,V_a)\\
&\hspace{1em}+\tilde{\gamma}_{i}(0,Z_{a}h,Z_{a}h,V_a)-\tilde{\gamma}_{i}(0,0,0,V_a)\\
&=\gamma_i+O(1)|\boldsymbol{\gamma}^{*}|Z_{a}h+O(1)|\gamma_4|h.
\end{align*}
It completes the proof of this proposition.
\end{proof}

\smallskip
{\bf Case 2.} $U_a$ and $U_k$ are connected by a weak 1-wave near the boundary $\Gamma_k$.

\begin{proposition}\label{prop-Re2}
Let $U_a,U_k\in O_{\epsilon}({U}_2^{(0)})$ with
\begin{align*}
V_{k}=\Phi_1(\gamma_1;V_a), \quad (u_{k},v_{k}) \cdot \textbf{n}_{k}=0.
\end{align*}
Let $\tilde{U}_a=(\tilde{V}_a,\tilde{Z}_a)$ and $\tilde{U}_k=(\tilde{V}_k,\tilde{Z}_k)$ be the value of $U_a,U_k$ after the reaction step respectively. Assume that
\begin{align*}
\tilde{V}_{k}=\Phi_1(\tilde{\gamma}_1;\tilde{V}_a), \quad (\tilde{u}_{k},\tilde{v}_{k}) \cdot \textbf{n}_{k}=0.
\end{align*}
Then, it holds that
\begin{equation}\label{estimate-Re2}
\tilde{\gamma}_1=\gamma_1+O(1)Z_{a}h.
\end{equation}
\end{proposition}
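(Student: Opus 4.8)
The plan is to imitate the structure of the proof of Proposition \ref{prop-Re1}, Case 1, but now in the presence of the boundary constraint $(u,v)\cdot\mathbf{n}_k=0$ that pins down the admissible state on $\Gamma_k$. Write $\tilde V_a = V_a + \widetilde{\mathcal V}(V_a,Z_ah)Z_ah$ by Lemma \ref{lemma-Re}; the reacted state $\tilde V_k$ is likewise obtained from $V_k$ by the reaction step. First I would set up, as in Lemma \ref{lemma-R2}, the scalar function
\[
\psi(\tilde\gamma_1,V_a,Z_ah)=\Bigl(\Phi_1^{(1)}(\tilde\gamma_1;\tilde V_a),\Phi_1^{(2)}(\tilde\gamma_1;\tilde V_a)\Bigr)\cdot\mathbf{n}_k,
\]
and observe that $\psi=0$ determines $\tilde\gamma_1$ uniquely as a $C^2$ function of $(V_a,Z_ah)$ for small $\epsilon$ and $h$, since $\partial_{\tilde\gamma_1}\psi|_{0}=\kappa_1(U_2^{(0)})>0$ by Lemma \ref{A.1}; the same argument with $h=0$ recovers $\gamma_1$ as the unique solution of $\psi(\gamma_1,V_a,0)=0$.

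Next I would estimate the difference $\tilde\gamma_1-\gamma_1$ by comparing these two implicit solutions. Since $\tilde V_a - V_a = \widetilde{\mathcal V}(V_a,Z_ah)Z_ah = O(1)Z_ah$ and $\mathbf{n}_k$ is the same in both equations (the boundary geometry is unaffected by the reaction step), subtracting $\psi(\gamma_1,V_a,0)=0$ from $\psi(\tilde\gamma_1,V_a,Z_ah)=0$ and applying the mean value theorem in the $\tilde\gamma_1$-variable gives
\[
\tilde\gamma_1-\gamma_1 = -\frac{\psi(\gamma_1,V_a,Z_ah)-\psi(\gamma_1,V_a,0)}{\partial_{\tilde\gamma_1}\psi(\xi,V_a,Z_ah)} = O(1)\,|\tilde V_a - V_a| = O(1)Z_ah,
\]
because the numerator is controlled by the Lipschitz dependence of $\Phi_1$ on its base point times $|\tilde V_a-V_a|$, while the denominator is bounded away from zero uniformly for $\epsilon,h$ small by Lemma \ref{A.1} and continuity. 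This yields \eqref{estimate-Re2}.

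The only point requiring care — and what I expect to be the mild obstacle — is verifying that the nondegeneracy $\partial_{\tilde\gamma_1}\psi \ne 0$ is uniform in $h$, i.e. that the reacted base state $\tilde V_a$ stays in $O_\epsilon(U_2^{(0)})$ so that Lemma \ref{A.1} continues to apply; this follows from Lemma \ref{lemma-Re}, which gives $\tilde V_a - V_a = O(1)Z_ah$ with $0\le Z_a\le 1$, so for $h$ small $\tilde V_a$ remains in the neighborhood. One should also note that the $Z$-component plays no role here: the reaction only perturbs $V$ through the $O(1)Zh$ term, and the 1-wave curve $\Phi_1$ has $dZ=0$, so there is no separate $\tilde\gamma_4$ to track as in Case 1. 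With these observations the estimate \eqref{estimate-Re2} follows directly, and the proof is complete.
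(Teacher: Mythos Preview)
Your proposal is correct and follows essentially the same route as the paper: set up the scalar boundary equation $\psi(\tilde\gamma_1,V_a,Z_ah)=(\Phi_1^{(1)},\Phi_1^{(2)})(\tilde\gamma_1;\tilde V_a)\cdot\mathbf n_k=0$, invoke the implicit function theorem with the nondegeneracy $\partial_{\tilde\gamma_1}\psi|_0=\kappa_1(U_2^{(0)})>0$, and then read off $\tilde\gamma_1-\gamma_1=O(1)Z_ah$ from the $O(1)Z_ah$ perturbation of the base state. The paper phrases the last step as a Taylor expansion of $\tilde\gamma_1(\gamma_1,Z_ah,V_a)$ in $Z_ah$, while you use the mean value theorem, but the content is identical.
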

\begin{proof}
By \eqref{eq-V}, we need to find the solution $\tilde{\gamma}_1$ as a function of $\gamma_1,Z_{a}h,$ and $V_a$ such that
\begin{align*}
&(\Phi_1^{(1)}(\tilde{\gamma}_1;V_a+\widetilde{\mathcal{V}}(V_a,Z_{a}h)Z_{a}h),
\Phi_1^{(2)}(\tilde{\gamma}_1;V_a+\widetilde{\mathcal{V}}(V_a,Z_{a}h)Z_{a}h)) \cdot \textbf{n}_{k}\\
&=(\Phi_1^{(1)}(\gamma_1;V_a),\Phi_1^{(2)}(\gamma_1;V_a)) \cdot \textbf{n}_{k}.
\end{align*}

Obviously, it follows from the implicit function theorem that $\tilde{\gamma}_{1}$ can be solved as a $C^2$-function of $(\gamma_1, Z_{a}h, V_a)$ uniquely. Moreover, by the Taylor expansion formula, we have that
\begin{align*}
\tilde{\gamma}_{1}(\gamma_1, Z_{a}h, V_a)&=\tilde{\gamma}_{1}(\gamma_1, 0, V_a)+\tilde{\gamma}_{1}(\gamma_1, Z_{a}h, V_a)-\tilde{\gamma}_{1}(\gamma_1, 0, V_a)=\gamma_1+O(1)Z_{a}h.
\end{align*}
\end{proof}

\smallskip
{\bf Case 3.} $U_a$ and $U_b$ are connected by a weak 1-wave, a strong contact discontinuity, and a weak 5-wave.

\begin{proposition}\label{prop-Re3}
Let $U_a\in O_{\epsilon}({U}_1^{(0)}),U_b\in O_{\epsilon}({U}_2^{(0)})$ with
\begin{align*}
&V_b=\tilde{\Phi}_5(\gamma_5,\mathcal{F}(\sigma_3,\sigma_2;\tilde{\Phi}_1(\gamma_1;V_a))),\quad Z_b=Z_a+\gamma_4.
\end{align*}
Let $\tilde{U}_a$ and $\tilde{U}_b$ be the value of $U_a$ and $U_b$ after the reaction step respectively. Assume that
\begin{align*}
&\tilde{V}_b=\tilde{\Phi}_5(\tilde{\gamma}_5,\mathcal{F}(\tilde{\sigma}_3,\tilde{\sigma}_2;
\tilde{\Phi}_1(\tilde{\gamma}_1;\tilde{V}_a))),\quad \tilde{Z}_b=\tilde{Z}_a+\tilde{\gamma}_4.
\end{align*}
Then, it holds that
\begin{equation}\label{estimate-Re3}
\begin{cases}
&\tilde{\gamma}_i=\gamma_i+O(1)Z_{a}h+O(1)|\gamma_4|h,\,\,i=1,5,\\
&\tilde{\sigma}_j=\sigma_j+O(1)Z_{a}h+O(1)|\gamma_4|h,\,\,j=2,3,\\
&\tilde{\gamma}_4=(1-{\phi(T_b)h}/{u_b})\gamma_4+O(1)Z_{a}h.
\end{cases}
\end{equation}
\end{proposition}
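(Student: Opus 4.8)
The plan is to mimic the strategy of Propositions \ref{prop-Re1} and \ref{prop-Re2}, combining the reaction-step representation \eqref{eq-V}--\eqref{eq-Z} with the implicit function theorem applied to the wave-parametrization map for the strong contact discontinuity configuration of Lemma \ref{lemma-R3}. First I would handle the reactant component $\tilde\gamma_4$ exactly as in Proposition \ref{prop-Re1}: since $Z_b = Z_a+\gamma_4$ and by \eqref{eq-Z} one has $\tilde Z_b = (1-\phi(T_b)h/u_b)Z_b$ and $\tilde Z_a = (1-\phi(T_a)h/u_a)Z_a$, subtracting gives
\[
\tilde\gamma_4 = \Bigl(1-\frac{\phi(T_b)h}{u_b}\Bigr)\gamma_4 + \Bigl(\frac{\phi(T_a)}{u_a}-\frac{\phi(T_b)}{u_b}\Bigr)Z_a h,
\]
and since $|U_a-U_b|\le C(|\gamma_1|+|\gamma_5|+|\sigma_2-\sigma_{20}|+|\sigma_3-\sigma_{30}|+|\gamma_4|)$ is controlled (the states are close to the respective background states), the coefficient difference is $O(1)$, yielding $\eqref{estimate-Re3}_3$.

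For the remaining four parameters $(\tilde\gamma_5,\tilde\sigma_3,\tilde\sigma_2,\tilde\gamma_1)$, I would set up, as in the proof of Lemma \ref{lemma-R3}, the map
\[
\psi(\tilde\gamma_5,\tilde\sigma_3,\tilde\sigma_2,\tilde\gamma_1;\,\boldsymbol\gamma^*,\sigma_3,\sigma_2, Z_ah,Z_bh,V_a)
=\tilde\Phi_5\bigl(\tilde\gamma_5;\mathcal F(\tilde\sigma_3,\tilde\sigma_2;\tilde\Phi_1(\tilde\gamma_1;V_a+\widetilde{\mathcal V}(V_a,Z_ah)Z_ah))\bigr)
-\Bigl(\tilde\Phi_5(\gamma_5;\mathcal F(\sigma_3,\sigma_2;\tilde\Phi_1(\gamma_1;V_a)))+\widetilde{\mathcal V}(V_b,Z_bh)Z_bh\Bigr),
\]
which vanishes at $h=0$ with $\tilde\gamma_i=\gamma_i$, $\tilde\sigma_j=\sigma_j$. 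The Jacobian of $\psi$ with respect to $(\tilde\gamma_5,\tilde\sigma_3,\tilde\sigma_2,\tilde\gamma_1)$ evaluated at the background configuration is exactly the determinant computed in \eqref{estimate-det} of Lemma \ref{estimate-F}, which is strictly positive; by continuity it stays nonzero in an $O_\epsilon$-neighborhood. Hence the implicit function theorem solves $\tilde\gamma_5,\tilde\sigma_3,\tilde\sigma_2,\tilde\gamma_1$ as $C^2$ functions of $(\boldsymbol\gamma^*,\sigma_3,\sigma_2,Z_ah,Z_bh,V_a)$.

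Finally I would extract the estimates by a telescoping Taylor expansion in the small parameters $Z_ah$ and $Z_bh$, splitting $Z_bh = Z_ah + (Z_b-Z_a)h = Z_ah + \gamma_4 h$: writing
\[
\tilde\gamma_i(\boldsymbol\gamma^*,\sigma_3,\sigma_2,Z_ah,Z_bh,V_a)
= \tilde\gamma_i(\cdots,0,0,\cdots) + \bigl[\tilde\gamma_i(\cdots,Z_ah,Z_ah,\cdots)-\tilde\gamma_i(\cdots,0,0,\cdots)\bigr] + \bigl[\tilde\gamma_i(\cdots,Z_ah,Z_bh,\cdots)-\tilde\gamma_i(\cdots,Z_ah,Z_ah,\cdots)\bigr],
\]
the first bracket is $\gamma_i$ (the reaction-free value), the second is $O(1)Z_ah$, and the third is $O(1)|Z_b-Z_a|h = O(1)|\gamma_4|h$; the same argument applies verbatim to $\tilde\sigma_j$. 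This gives $\eqref{estimate-Re3}_1$ and $\eqref{estimate-Re3}_2$. The main obstacle is verifying that the Jacobian of $\psi$ at the background state is genuinely the positive determinant \eqref{estimate-det} — i.e.\ checking that the perturbation $\widetilde{\mathcal V}(V_a,Z_ah)Z_ah$ and the composed maps $\tilde\Phi_i$ do not alter the leading-order structure of the differential — but this is a direct computation using $\tilde\Phi_i|_{h=0}=\Phi_i$ and $\partial_{\alpha_i}\Phi_i|_0 = r_i$, so it reduces to the already-established Lemma \ref{estimate-F}.
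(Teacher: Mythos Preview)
Your proposal is correct and follows essentially the same route as the paper: the $\tilde\gamma_4$ computation via \eqref{eq-Z} is identical, and for the remaining parameters the paper sets up exactly your map $\psi$, invokes the implicit function theorem (without explicitly restating the Jacobian, but it is indeed the determinant of Lemma \ref{estimate-F}), and then performs the same two-step telescoping in $(Z_ah,Z_bh)$ to extract the $O(1)Z_ah$ and $O(1)|\gamma_4|h$ contributions. One cosmetic remark: your aside that $|U_a-U_b|$ is small is not quite right here since $U_a\in O_\epsilon(U_1^{(0)})$ and $U_b\in O_\epsilon(U_2^{(0)})$ lie near \emph{different} background states, but this is irrelevant---the factor $\phi(T_a)/u_a-\phi(T_b)/u_b$ is merely bounded, which is all that $O(1)$ requires.
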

\begin{proof}
By \eqref{eq-Z}, it is clear that
$
\tilde{Z}_b=(1-{\phi(T_b)}h/{u_b})Z_b,$ and $ \tilde{Z}_a=(1-{\phi(T_a)}h/{u_a})Z_a.
$
Hence we have
$
\tilde{\gamma}_4=(1-{\phi(T_b)h}/{u_b})\gamma_4+({\phi(T_a)}/{u_a}-{\phi(T_b)}/{u_b})Z_{a}h,
$
which implies $\eqref{estimate-Re3}_3$.

Next, by \eqref{eq-V}, we need to find the solution $\tilde{\gamma}_{i},i=1,5$ and $\tilde{\sigma}_{j},j=2,3$ as a function of $\gamma_{5},\sigma_3,\sigma_2,\gamma_1,Z_{a}h,Z_{b}h$ and $V_a$ such that
\begin{align*}
&\tilde{\Phi}_5(\tilde{\gamma}_5,\mathcal{F}(\tilde{\sigma}_3,\tilde{\sigma}_2;\tilde{\Phi}_1(\tilde{\gamma}_1;V_a
+\widetilde{\mathcal{V}}(V_a,Z_{a}h)Z_{a}h)))\\
&=\tilde{\Phi}_5(\gamma_5,\mathcal{F}(\sigma_3,\sigma_2;\tilde{\Phi}_1(\gamma_1;V_a)))
+\widetilde{\mathcal{V}}(V_b,Z_{b}h)Z_{b}h.
\end{align*}

It follows from the implicit function theorem that $\tilde{\gamma}_{i},i=1,5$ and $\tilde{\sigma}_{j},j=2,3$ can be solved as a $C^2$-function of $(\gamma_{5},\sigma_3,\sigma_2,\gamma_1,Z_{a}h,Z_{b}h,V_a)$ uniquely. Moreover, we can obtain
\begin{align*}
\tilde{\gamma}_{i}(\gamma_{5},\sigma_3,\sigma_2,\gamma_1,Z_{a}h,Z_{b}h,V_a)
&=O(1)|Z_a-Z_b|h+\tilde{\gamma}_{i}(\gamma_{5},\sigma_3,\sigma_2,\gamma_1,Z_{a}h,Z_{a}h,V_a)\\
&=O(1)|Z_a-Z_b|h+O(1)Z_{a}h+\tilde{\gamma}_{i}(\gamma_{5},\sigma_3,\sigma_2,\gamma_1,0,0,V_a)\\
&=\gamma_i+O(1)Z_{a}h+O(1)|\gamma_4|h.
\end{align*}

The proof of $\eqref{estimate-Re3}_2$ can be derived in the same way. It completes the proof of this proposition.
\end{proof}

\section{Global entropy solutions of the steady exothermically reacting Euler equations}\label{sec:GE}
Thanks to the local estimates obtained in section \ref{sec:LE}, in this section, we will introduce the fractional-step Glimm scheme and a Glimm-type functional to construct the approximate solutions for the initial boundary value problem \eqref{eq-reaction-2} and \eqref{I}-\eqref{B},
%In this section, we will identify a Glimm-type functional to obtain the required BV estimates of the approximate solutions.
by deriving global estimates on the non-reacting step and the reacting step. %, we can  prove the uniform bound of the Glimm-type functional.
With these in hand, the global existence of entropy solutions with a strong contact discontinuity is obtained.

\subsection{The Glimm fractional-step scheme}\label{Construction}

As shown in Fig. \ref{Points}, we use the notations in \eqref{A}-\eqref{N}, and let $h>0$ and $s>0$ be the step-length in the $x$ and $y$ directions respectively.

The construction of the fractional-step scheme for the inhomogeneous system \eqref{eq-reaction-2} is as follows.

\begin{figure}[ht]
\centering
\setlength{\unitlength}{1mm}
\begin{picture}
(50,48)(0,22)
\linethickness{1pt}
\put(0,62){\line(0,-1){40}}
\put(0,62){\line(5,-2){32}}
\put(24,52.3){\line(0,-1){30}}
\put(48,22){\line(0,1){36.3}}
\put(24,52.3){\line(4,1){24}}
\put(36,55){\vector(-1,3){3}}
\put(3,60.9){\line(5,2){4}}
\put(7,59.2){\line(5,2){4}}
\put(11,57.6){\line(5,2){4}}
\put(15,56){\line(5,2){4}}
\put(19,54.4){\line(5,2){4}}
\put(11,57.6){\vector(1,2){4}}
\put(28,53.3){\line(1,2){2}}
\put(32,54.3){\line(1,2){2}}
\put(36,55.3){\line(1,2){2}}
\put(40,56.3){\line(1,2){2}}
\put(44,57.3){\line(1,2){2}}
\put(11,68){$\textbf{n}_{k-1}$}
\put(33,65){$\textbf{n}_{k}$}
\put(9,53){$\Gamma_{k-1}$}
\put(37,52){$\Gamma_{k}$}
\put(-1,65){$P_{k-1}$}
\put(23,54){$P_{k}$}
\put(49,60){$P_{k+1}$}
\put(29,51){$\omega_{k}$}
\put(8,34){$\Omega_{k-1,h}$}
\put(34,34){$\Omega_{k,h}$}
\end{picture}
\caption{The Glimm fractional-step scheme.}\label{Points}
\end{figure}
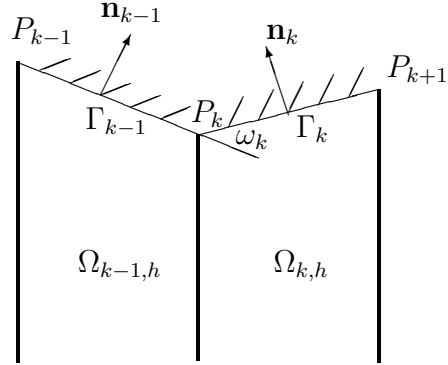

By \eqref{assum-g}, the boundary $y=g(x)$ is a perturbation of the straight wall. It means that for sufficiently small $\delta_0>0$, we have
\[
\sup_{x\ge 0}|g'(x)|<\delta_0.
\]
Therefore,
\begin{equation}\label{g}
m:=\sup_{k\geq 0}\Big\{\frac{|y_{k+1}-y_{k}|}{h}\Big\}<\delta_0.
\end{equation}

Let ${y}^{(0)}$ be given by \eqref{I}.
Choose $s$ such that ${y}^{(0)}/s=2N$ is an even number, and the following Courant--Friedrichs--Lewy condition holds:
\[
\frac{s}{h}>\max_{j=1,5}\Big(\sup_{U\in O_{\epsilon}(U_{1}^{(0)})\cup O_{\epsilon}(U_{2}^{(0)})}|\lambda_j(U)|\Big)+m.
\]

For any positive integer $k$ and negative integer $n$, \emph{i.e.}, $k\geq1$ and $n\leq-1$, define
\[
y_{k,n}=y_k+(2n+1+\theta_k)s,
\]
where $\theta_k$ is randomly chosen in $(-1,1)$.  Define %Then for integers $k\geq1$ and $n\leq-1$, define
\[
P_{k,n}=(kh,y_{k,n}),%\qquad k\ge 1, n\leq-1 ,
\]
to be the mesh points.

Now we can define the approximate solutions $U_{h,\theta}$ in $\Omega_h$, where $\theta=(\theta_1,\theta_2,\cdots)$, inductively as follows.

%We start by taking a piecewise constant approximation of the initial data. For $2ns<y<(2n+2)s,n=-1,-2,\cdots$, set
First, for initial data $U_0(y)$ and for $y\in(2ns,(2n+2)s)$, let
\[
U_{h,0}(y)=\frac{1}{2s}\int_{2ns}^{(2n+2)s} U_0(y)dy.
\]

Second, assume that $U_{h,\theta}$ has been constructed in $\{0\leq x<kh\}\cap\Omega_h$,
then for $y\in(y_k+2ns,y_k+2(n+1)s)$, define $U_{k,n}^0$ and $\widetilde{U}_{k,n}^0$ such that
\begin{equation}\label{step-h}
\begin{cases}
&U_{k,n}^0:=U_h(kh-,y_{k,n}),\\
&W(\widetilde{U}_{k,n}^0):=W(U_{k,n}^0)+G(U_{k,n}^0)h.
\end{cases}
\end{equation}
%then we define $U_{h,\theta}|_{x=kh}=\widetilde{U}_{k,n}^0$ in the interval $y_k+2ns< y <y_k+2(n+1)$.

Now we are going to define $U_{h,\theta}$ in $\Omega_{k,h}$.

The first case is the Riemann problem with the boundary. Let $T_{k,0}$ be the diamond with the vertices that $(kh,y_k),(kh,y_k-s),((k+1)h,y_{k+1}-s)$, and $((k+1)h,y_{k+1})$.
Then  $U_{h,\theta}=U_{k,0}$ in $T_{k,0}$ is the solution of the following Riemann problem:
\begin{equation}\label{T0}
\left\{
\begin{array}{ll}
W(U_{k,0})_{x}+H(U_{k,0})_y=0, & \text{in $T_{k,0}$},\\
U_{k,0}|_{x=kh}=\widetilde{U}_{k,-1}^0, &y_k-s<y<y_k,\\
(u_{k,0},v_{k,0})\cdot \textbf{n}_k=0,  &\text{on $\Gamma_k$}.
\end{array}
\right.
\end{equation}
%where the vertices of $T_{k,0}$ are $(kh,y_k),(kh,y_k-s),((k+1)h,y_{k+1}-s)$, and $((k+1)h,y_{k+1})$. We thus obtain the solution $U_{k,0}$ and define $U_{h,\theta}=U_{k,0}$ in $T_{k,0}$.

The second case is the Riemann problem without the boundary. For $n\leq -1$, let $T_{k,n}$ be the diamond with the vertices that $(kh,y_k+(2n+1)s),(kh,y_k+(2n-1)s),((k+1)h,y_{k+1}+(2n-1)s)$, and $((k+1)h,y_{k+1}+(2n+1)s)$. Then $U_{h,\theta}=U_{k,n}$ in $T_{k,n}$ is the solution of the following Riemann problem:
\begin{equation}\label{Tn}
\left\{
\begin{array}{ll}
W(U_{k,n})_{x}+H(U_{k,n})_y=0 \qquad \text{in $T_{k,n}$},\\
U_{k,n}|_{x=kh}=
\begin{cases}
\widetilde{U}_{k,n}^0,\quad \quad y_k+2ns<y<y_k+(2n+1)s,\\
\widetilde{U}_{k,n-1}^0,\quad y_k+(2n-1)s<y<y_k+2ns.
\end{cases}
\end{array}
\right.
\end{equation}
%where the vertices of $T_{k,n}$ are $(kh,y_k+(2n+1)s),(kh,y_k+(2n-1)s),((k+1)h,y_{k+1}+(2n-1)s)$, and $((k+1)h,y_{k+1}+(2n+1)s)$. We obtain the solution $U_{k,n}$ and define $U_{h,\theta}=U_{k,n}$ in $T_{k,n}$.

Therefore, we constructed an approximate solution $U_{h,\theta}$ globally in $\Omega_h$ provided that we can obtain a uniform bound of $U_{h,\theta}$, which is the main objective in the remaining part of this section.

\subsection{Glimm-type functional}\label{subsec:3.2}
In order to obtain a uniform bound of $U_{h,\theta}$, let us introduce the Glimm-type functional in this subsection.
% We establish necessary estimates to show that $U_{h,\theta}$ can be globally defined.
Assume that $U_{h,\theta}$ has been defined in $\{0\leq x<kh\}\cap\Omega_h$ and the following conditions are satisfied:
\begin{enumerate}[leftmargin=1]
\item[$A_{1}(k-1)$:]In each $\Omega_{h,i},\, 0\leq i\leq k-1$, there is a strong contact discontinuity $y=\chi^{(i)}$ with strength $(\sigma_{2}^{(i)},\sigma_{3}^{(i)},\gamma_{4}^{(i)})$ so that $\sigma_{j}^{(i)}\in O_{\hat{\epsilon}}(\sigma_{j0}), j=2,3,\, \gamma_{4}^{(i)}\in O_{\hat{\epsilon}}(0)$. $y=\chi^{(i)}$ divides $\Omega_{h,i}$ into two subregions: $\Omega_{h,i}^{(1)}$ and $\Omega_{h,i}^{(2)}$, where $\Omega_{h,i}^{(2)}$ is the region bounded by $y=\chi^{(i)}$ and $\Gamma_i$.
\item[$A_{2}(k-1)$:] $U_{h,\theta}|_{\Omega_{h,i}^{(1)}}\in O_{\epsilon}({U}_1^{(0)}),$\,\, and  $U_{h,\theta}|_{\Omega_{h,i}^{(2)}}\in O_{\epsilon}({U}_2^{(0)}),\,\, 0\leq i\leq k-1.$
\item[$A_{3}(k-1)$:] $\{\chi^{(i)}\}^{k-1}_{i=0}$ together forms $y=\chi_{h,\theta}(x)$, which is the strong contact discontinuity in $\{0\leq x<kh\}\cap\Omega_h$ and emanating from the point $(0,{y}^{(0)})$.
\end{enumerate}

Then we shall prove that $U_{h,\theta}$ defined in $\Omega_{h,k}$ by section \ref{Construction} satisfies $A_1(k),A_2(k)$ and $A_3(k)$. From the construction in section \ref{Construction}, there exists a strong contact discontinuity $y=\chi^{(k)}$ in a diamond $T_{k,n}$. We extend $\chi_{h,\theta}$ to $\Omega_{h,k}$ such that $\chi_{h,\theta}$=$\chi^{(k)}$ in $\Omega_{h,k}$ and define $\Omega_{h,k}^{(1)}$ and $\Omega_{h,k}^{(2)}$ in the same way as in $A_1(k-1)$. So it is sufficient to show that $A_2(k)$ holds such that
\[
U_{h,\theta}|_{\Omega_{h,k}^{(i)}}\in O_{\epsilon}({U}_i^{(0)}),\,\, i=1,2,\quad
\sigma_{j}^{(k)}\in O_{\hat{\epsilon}}(\sigma_{j0}),\,\, j=2,3,\quad \gamma_{4}^{(k)}\in O_{\hat{\epsilon}}(0).
\]

To achieve this, as in \cite{Glimm1965}, we introduce the mesh curves to establish the bound on the total variation of $U_{h,\theta}$.
\begin{definition}
A k-mesh curve J is a piecewise unbounded linear curve lying in the strip $\{(k-1)h\le x\le(k+1)h\}$ and consists of the diamond boundaries of the form $P_{k,n-1}N(\theta_{k+1},n)$, $P_{k,n-1}S(\theta_k,n)$, $S(\theta_k,n)P_{k,n}$, and $N(\theta_{k+1},n)P_{k,n}$,
where
\[
N(\theta_{k+1},n)=
\left\{
\begin{array}{ccc}
P_{k+1,n}&\mbox{$\theta_{k+1}\le 0$},\\
P_{k+1,n-1}&\mbox{$\theta_{k+1}> 0$},
\end{array}
\right.
\quad
S(\theta_{k},n)=
\left\{
\begin{array}{ccc}
P_{k-1,n-1}&\mbox{$\theta_{k}\le 0$},\\
P_{k+1,n}&\mbox{$\theta_{k}> 0$}.
\end{array}
\right.
\]
\end{definition}

\begin{definition}
We call mesh curve $I$ is an immediate successor to mesh curve $J$, if all but one mesh points of $I$ are on $J$ and $I$ lies on the right hand side of $J$.
\end{definition}

Then, we define the Glimm-type functional $F(J)$ on J.
\begin{definition}\label{def-f}
Let
\[
F(J)=L(J)+K Q(J),
\]
with
\begin{align*}
&L(J)=L_{c}(J)+L^1(J)+L^2(J),\\
&L_{c}(J)=C^{*}_{1}(|\sigma_2^J-\sigma_{20}|+|\sigma_3^J-\sigma_{30}|)+C^{*}_{2}|\gamma_4^J|,\\
&L^1(J)=K^{*}_{11}L^{1}_{1}(J)+K^{*}_{12}L^{1}_{2}(J)+K^{*}_{13}L^{1}_{3}(J)+K^{*}_{14}L^{1}_{4}(J)
+K^{*}_{15}L^{1}_{5}(J),\\
&L^2(J)=K^{*}_{20}L_{0}(J)+L^{2}_{1}(J)+K^{*}_{22}L^{2}_{2}(J)+K^{*}_{23}L^{2}_{3}(J)+K^{*}_{24}L^{2}_{4}(J)
+K^{*}_{25}L^{2}_{5}(J),\\
&Q(J)=\sum\{|\alpha_j||\beta_i|:\text{both weak waves $\alpha_j$ and $\beta_i$ across $J$ and approach, $i,j\neq 4$.}\},\\
&L_0(J)=\sum\{|\omega_k(P_k)|:P_k \in \Gamma_J\},\quad \Gamma_J=\{P_k=(kh,y_k)\, :\,P_k \in J^+\cap\partial\Omega_{h},k\ge 0\},\\
&L^{i}_j(J)=\sum\{|\alpha_j|:\text{$\alpha_j$ across $J$ in region $\Omega^{(i)}_{h,k-1}\cup\Omega^{(i)}_{h,k}$,\, $i=1,2,\, j=1,2,3,4,5.$}\},\
\end{align*}
where $\sigma_2^J,\sigma_3^J$ and $\gamma_4^J$ stand for the strength of the strong contact discontinuity across $J$, and $J^+$ denotes the subregion of $\Omega_{h}$ such that all the points in $J^+$ lie at the right hand side of $J$.
\end{definition}

The positive constants $C^{*}_{1},C^{*}_{2}$ and $K$ in Definition \ref{def-f} will be defined later. The other constants are given in the following lemma.
\begin{lemma}\label{lemma-choice}
There exist positive constants $K_{1i}^{*},i=1,2,3,4,5,$ and $K^{*}_{2i},i=0,2,3,4,5,$ such that
\begin{align*}
&K^{*}_{20}>|K_{b0}|,\quad K^{*}_{2i}>|K_{bi}|,i=2,3,5,\quad K^{*}_{24}>C^{*}_{2},\quad K^{*}_{14}>C_{2}^{*},\\
&K^{*}_{11}<\frac{1-K^{*}_{25}|K_{25}|}{|K_{21}|},\quad K^{*}_{15}>K^{*}_{25}|K_{15}|+K^{*}_{11}|K_{11}|,
\end{align*}
and $K_{1i}^{*},i=2,3,$ are arbitrarily large positive constants.
\end{lemma}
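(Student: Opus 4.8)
```latex
\textbf{Proof proposal for Lemma \ref{lemma-choice}.}
The plan is to exhibit an explicit hierarchy of choices, driven by the inequalities that the
Glimm functional $F(J)$ must satisfy to be nonincreasing across every type of interaction
analyzed in Section \ref{sec:LE}. The strategy is to first fix the constants governing the
weak-wave bookkeeping in region $\Omega^{(2)}$ near the boundary and across the strong contact
discontinuity (these are the ``dangerous'' terms, since the reflection coefficient $K_{25}$ is
the only one that is not automatically small), then descend to region $\Omega^{(1)}$, and finally
choose the two free constants $K^{*}_{12},K^{*}_{13}$ as large as needed to absorb whatever
cross terms appear.

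First I would observe that the constraints decouple into a ``bounded-from-below'' group and a
``bounded-from-above'' group. The conditions $K^{*}_{20}>|K_{b0}|$, $K^{*}_{2i}>|K_{bi}|$ for
$i=2,3,5$, $K^{*}_{24}>C^{*}_{2}$, $K^{*}_{14}>C^{*}_{2}$ are simply lower bounds on positive
constants; since $K_{b0},K_{bi}$ are bounded by Proposition \ref{prop-in-b} and $C^{*}_{2}$ is a
fixed positive constant chosen in Definition \ref{def-f}, each can be met by picking the
corresponding $K^{*}$ large enough. The only genuine compatibility issue is the pair
\[
K^{*}_{11}<\frac{1-K^{*}_{25}|K_{25}|}{|K_{21}|},\qquad
K^{*}_{15}>K^{*}_{25}|K_{15}|+K^{*}_{11}|K_{11}|,
\]
together with the implicit requirement $K^{*}_{25}>0$ and $K^{*}_{25}|K_{25}|<1$ so that the
right-hand side of the first inequality is positive. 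Here is where Proposition \ref{prop-in-c1}
enters decisively: it guarantees $|K_{25}|<1$ at the base state, so one may first choose
$K^{*}_{25}$ with $|K_{25}|<K^{*}_{25}<1/|K_{25}|$ (for instance $K^{*}_{25}$ slightly above
$\max\{|K_{b5}|,1\}$ if that still keeps $K^{*}_{25}|K_{25}|<1$, otherwise simply
$K^{*}_{25}\in(|K_{25}|,|K_{25}|^{-1})$), which makes $1-K^{*}_{25}|K_{25}|>0$.

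With $K^{*}_{25}$ fixed, the window for $K^{*}_{11}$ is a nonempty interval $(0,(1-K^{*}_{25}|K_{25}|)/|K_{21}|)$,
since $|K_{21}|$ is bounded by Proposition \ref{prop-in-c1}; pick any $K^{*}_{11}$ strictly inside
it. Now $K^{*}_{25}|K_{15}|+K^{*}_{11}|K_{11}|$ is a fixed finite number (again $|K_{15}|$,
$|K_{11}|$ bounded), so $K^{*}_{15}$ can be taken larger than it. The remaining constants
$K^{*}_{20},K^{*}_{22},K^{*}_{23},K^{*}_{24},K^{*}_{14}$ are then chosen to satisfy their
individual lower bounds, and $K^{*}_{12},K^{*}_{13}$ are declared ``arbitrarily large,'' which is
consistent because they appear in no upper-bound constraint. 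I expect the main obstacle to be
purely organizational rather than analytical: one must verify that the order of selection is
acyclic, i.e., that no constant is required to be simultaneously bounded above by something
depending on a constant chosen later and below by something depending on it. The key structural
fact that breaks the potential cycle is exactly the strict inequality $|K_{25}|<1$ of
Proposition \ref{prop-in-c1}, which creates slack in the $K^{*}_{11}$--$K^{*}_{15}$--$K^{*}_{25}$
triangle; everything else is a one-directional cascade of lower bounds.
```
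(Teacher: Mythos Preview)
Your approach is essentially identical to the paper's: both pivot on the strict inequality $|K_{25}|<1$ from Proposition~\ref{prop-in-c1} to open a window for $K^{*}_{25}$, then choose $K^{*}_{11}$ inside the resulting interval, and finally cascade through the remaining lower bounds.

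The one place where your argument is loose is the selection of $K^{*}_{25}$. You need \emph{simultaneously} $K^{*}_{25}>|K_{b5}|$ and $K^{*}_{25}|K_{25}|<1$, and your parenthetical fallback ``otherwise simply $K^{*}_{25}\in(|K_{25}|,|K_{25}|^{-1})$'' does not secure the first inequality; if hypothetically $|K_{b5}|\geq 1/|K_{25}|$, no admissible $K^{*}_{25}$ would exist and the lemma would fail. The clean resolution---which is exactly what the paper does---is to invoke the explicit value $K_{b5}=1$ from \eqref{estimate-Kb} in Proposition~\ref{prop-in-b}. Combined with $|K_{25}|<1$ this gives $|K_{b5}|=1<1/|K_{25}|$, so the interval $(|K_{b5}|,\,1/|K_{25}|)$ is nonempty and any $K^{*}_{25}$ chosen there satisfies both constraints at once. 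With that one-line fix your cascade is complete.
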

\begin{proof}
%The key point is to choose proper constants $K^*_{25}$ and $K^*_{11}$.
By Proposition \ref{prop-in-b} and Proposition \ref{prop-in-c1}, we know that  $K_{b5}=1$ and $|K_{25}|<1$.
Hence there exists a constant $K^*_{25}$ such that
   $K_{b5}<K^*_{25}<1/|K_{25}|$. Then we can choose a positive constant $K^*_{11}$ satisfying
  \[ 0<K^{*}_{11}<\frac{1-K^{*}_{25}|K_{25}|}{|K_{21}|}.\]
  This completes the proof.
\end{proof}

\subsection{Global estimates of the approximate solutions}
In this subsection, we will show that the functional $F(J)$ is decreasing to establish the global estimates of the approximate solutions. First let us consider the estimates on the non-reacting step.
\begin{proposition}\label{prop-f}
Suppose that $g(x)$ satisfies \eqref{g}, and suppose that I and J are two k-mesh curves such that J is an immediate successor of I. If
\[
U_h|_{I\cap(\Omega_{h,k-1}^{(i)}\cup\Omega_{h,k}^{(i)})}\in O_{\epsilon}({U}_i^{(0)}),\quad i=1,2; \quad |\sigma^{I}_{j}-\sigma_{j0}|<\hat{\epsilon},\quad j=2,3;\quad |\gamma_{4}^{I}|<\hat{\epsilon},
\]
for some $\epsilon,\hat{\epsilon}>0$, then there exists $\tilde{\epsilon}>0$ such that if $F(I)\leq\tilde{\epsilon}$, then it holds that
\begin{equation}\label{FJ}
F(J)\leq F(I).
\end{equation}
\end{proposition}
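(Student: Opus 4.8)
The plan is to show that $F(J) \leq F(I)$ by analyzing the finitely many ways in which $J$ can be an immediate successor of $I$: namely, the difference between $I$ and $J$ is localized in a single diamond, and that diamond either (a) is an interior diamond $T_{k,n}$, $n \leq -2$, away from both the boundary and the strong contact discontinuity, (b) is the boundary diamond $T_{k,0}$, or (c) is the diamond containing the strong contact discontinuity. In case (a) we are in the situation of Proposition \ref{prop-in-w}; in case (b), Proposition \ref{prop-in-b}; in case (c), Propositions \ref{prop-in-c1} and \ref{prop-in-c2} depending on whether the incoming weak waves meet the strong contact discontinuity from above or below. In each case, the waves crossing $I$ and $J$ agree outside the diamond, so $F(J) - F(I)$ reduces to comparing the local contributions coming from the waves entering and leaving that single diamond, together with the change in the quadratic term $Q$.

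First I would write $F(J) - F(I) = \big(L(J) - L(I)\big) + K\big(Q(J) - Q(I)\big)$ and, in each case, estimate the two brackets separately. For the interior case, the incoming waves $\alpha_i$ (from the diamond below) and $\beta_i$ (from the diamond above) combine to outgoing waves $\gamma_i$ with $\gamma_i = \alpha_i + \beta_i + O(1)\Delta(\boldsymbol{\alpha}^*,\boldsymbol{\beta}^*)$ by \eqref{estimate-in-w}. The linear functional on the region-$i$ part then changes by at most $O(1)\Delta(\boldsymbol{\alpha}^*,\boldsymbol{\beta}^*)$ (times the fixed weights $K^*_{ij}$), while $Q(J) - Q(I) \leq -\Delta(\boldsymbol{\alpha}^*,\boldsymbol{\beta}^*) + O(1) F(I) \cdot \Delta(\boldsymbol{\alpha}^*,\boldsymbol{\beta}^*)$, since the interacting pairs disappear but new pairs can form against the (uniformly small) remaining waves whose total strength is $\leq O(1) F(I) \leq O(1)\tilde\epsilon$. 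Choosing $K$ large enough (and then $\tilde\epsilon$ small) forces the combined change to be $\leq 0$. The boundary case is handled the same way using \eqref{estimate-in-b}: the reflected $1$-wave $\gamma_1 = \beta_1 + K_{b0}\omega_k + K_{b2}\alpha_2 + K_{b3}\alpha_3 + K_{b5}\alpha_5$, and the inequalities $K^*_{20} > |K_{b0}|$, $K^*_{2i} > |K_{bi}|$ from Lemma \ref{lemma-choice} guarantee that the weighted contribution of $\gamma_1$ on the region-$2$ side is dominated by the incoming $\beta_1, \alpha_2, \alpha_3, \alpha_5$ contributions plus the boundary term $L_0$ (which decreases by $|\omega_k|$ as $P_k$ passes to $J^+$); note $K_{b5} = 1$ is exactly balanced since $K^*_{25} > K_{b5}$.

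The main obstacle — and the heart of the proof — is case (c), the interaction at the strong contact discontinuity. Here we must use the precise reflection/transmission relations \eqref{estimate-in-c1} (waves from above) or \eqref{estimate-in-c2} (waves from below). Consider waves from above: the outgoing data is $\gamma_1 = K_{21}\beta_1 + \alpha_1 + O(1)\Delta'$, $\gamma_5 = K_{25}\beta_1 + \alpha_5 + \beta_5 + O(1)\Delta'$, $\sigma'_i = K_{2i}\beta_1 + \beta_i + \sigma_i + O(1)\Delta'$, with $|K_{25}| < 1$ by Proposition \ref{prop-in-c1}. The key algebraic point is that the change in $L^1 + L^2 + L_c$ is controlled by
\[
K^*_{11}|K_{21}||\beta_1| + K^*_{25}|K_{25}||\beta_1| - K^*_{15}|\beta_1| + (\text{terms } \leq O(1)\Delta' + C^*_1\text{-weighted }\sigma\text{ shifts}),
\]
and the inequalities $K^*_{11} < (1 - K^*_{25}|K_{25}|)/|K_{21}|$ and $K^*_{15} > K^*_{25}|K_{15}| + K^*_{11}|K_{11}|$ from Lemma \ref{lemma-choice} are engineered precisely so the coefficient of $|\beta_1|$ is strictly negative, absorbing the positive contributions from the transmitted $\sigma'_i$ and $\gamma_4$ (using $K^*_{24}, K^*_{14} > C^*_2$ and $C^*_1$ large). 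The symmetric case from below uses \eqref{estimate-in-c2} with the roles of $1$- and $5$-waves swapped, together with $K^*_{15} > K^*_{25}|K_{15}| + K^*_{11}|K_{11}|$. Finally the quadratic error $\Delta'$ (resp. $\Delta''$) is of quadratic order in small quantities, so after choosing $K$ large the remaining $O(1)\Delta'$ terms are dominated by the decrease in $K Q(J)$ (or by the strict negativity already obtained), once $\tilde\epsilon$ is small; one then checks that the new $\sigma'_j$ still lie in $O_{\hat\epsilon}(\sigma_{j0})$ and $\gamma_4^{(k)} \in O_{\hat\epsilon}(0)$ by \eqref{diff-estimate-f} and the smallness of $F(I)$, closing the induction hypothesis $A_1(k)$–$A_3(k)$.
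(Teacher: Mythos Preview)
Your overall plan is the same as the paper's: localize to the single diamond $\Lambda$ between $I$ and $J$, and split into the interior case, the boundary case, and the two sub-cases at the strong contact discontinuity, using Propositions~\ref{prop-in-w}, \ref{prop-in-b}, \ref{prop-in-c1}, \ref{prop-in-c2} respectively, then choose $K$ large and $\tilde\epsilon$ small. That part is fine.

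However, your bookkeeping in case~(c) has two concrete errors that would break the argument as written.

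First, in the ``from above'' sub-case you write the change in $L^1+L^2+L_c$ as
\[
K^*_{11}|K_{21}|\,|\beta_1| + K^*_{25}|K_{25}|\,|\beta_1| - K^*_{15}|\beta_1| + \cdots.
\]
But $\beta_1$ is a weak $1$-wave in region~(2), and by Definition~\ref{def-f} the coefficient of $L^2_1$ in $L^2$ is $1$, not $K^*_{15}$ (which weights $L^1_5$, the $5$-waves in region~(1)). The correct leading balance is
\[
K^*_{11}|K_{21}|\,|\beta_1| + K^*_{25}|K_{25}|\,|\beta_1| - |\beta_1|,
\]
and the relevant inequality from Lemma~\ref{lemma-choice} is $K^*_{11}<(1-K^*_{25}|K_{25}|)/|K_{21}|$, i.e.\ $K^*_{11}|K_{21}|+K^*_{25}|K_{25}|<1$. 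The other inequality $K^*_{15}>K^*_{25}|K_{15}|+K^*_{11}|K_{11}|$ is not used here; it is precisely what handles the ``from below'' sub-case, where the incoming $\alpha_5$ lies in region~(1) and carries weight $K^*_{15}$. You have conflated the two.

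Second, you say one should take $C^*_1$ large. It must be taken \emph{small}. The shift $|\sigma'_j-\sigma_j|\le |K_{2j}||\beta_1|+|\beta_j|+M\Delta'$ produces a contribution $C^*_1\sum_j|K_{2j}|\,|\beta_1|$ to $L_c(J)-L_c(I)$; if $C^*_1$ were large this term would destroy the strict negativity $K^*_{11}|K_{21}|+K^*_{25}|K_{25}|-1<0$ just obtained. The paper fixes this by choosing $C^*_1$ small enough. (The $\gamma_4$ contribution is handled by $K^*_{24}>C^*_2$ and $K^*_{14}>C^*_2$, as you note.) Also, verifying $\sigma^{(k)}_j\in O_{\hat\epsilon}(\sigma_{j0})$ and $|\gamma_4^{(k)}|<\hat\epsilon$ is not part of this proposition; it belongs to the subsequent lemma.
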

\begin{proof}
Let $\Lambda$ be the diamond that is formed by I and J. Then assume that $I=I_0\cup I'$ and $J=I_0\cup J'$ such that  $\partial\Lambda=I'\cup J'$. We will show this proposition case by case depending on the location of $\Lambda$.
%We consider four cases depending on the location of $\Lambda$:

\begin{figure}[ht]
\begin{center}
\setlength{\unitlength}{1mm}
\begin{picture}(50,64)(0,6)
\linethickness{1pt}
\put(0,4){\line(0,1){72}}
\put(24,4){\line(0,1){72}}
\put(48,4){\line(0,1){72}}
\put(0,52){\line(3,2){20}}
\put(0,52){\line(3,-1){20}}
\put(0,52){\line(5,2){20}}
\put(0,18){\line(1,1){20}}
\put(0,18){\line(5,3){20}}
\put(0,18){\line(4,-1){20}}
\put(24,40){\line(5,-2){22}}
\put(24,40){\line(3,2){22}}
\put(24,40){\line(5,1){22}}
\thinlines
\qbezier(0,30)(12,42)(24,54)
\qbezier(0,30)(12,27)(24,24)
\qbezier(24,24)(36,32)(48,40)
\qbezier(24,54)(36,47)(48,40)
\put(24,24){\line(1,-1){15}}
\put(24,54){\line(-1,2){10}}
\put(10,63){$\beta_5$}
\put(10,53){$\beta_{2(3,4)}$}
\put(10,44.5){$\beta_1$}
\put(10,34){$\alpha_5$}
\put(10,22){$\alpha_{2(3,4)}$}
\put(10,11){$\alpha_1$}
\put(40,54){$\gamma_5$}
\put(35,40){$\gamma_{2(3,4)}$}
\put(40,30){$\gamma_1$}
\put(2,35){$I'$}
\put(32,25){$J'$}
\put(32,10){$I_0$}
\put(16.5,70){$I_0$}
\end{picture}
\end{center}
\caption{Case 1: in the interior of $\Omega_h$.}\label{8}
\end{figure}
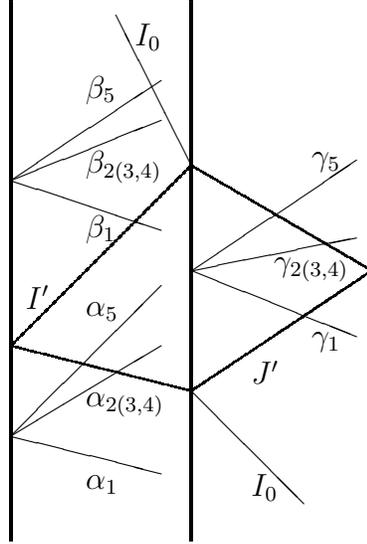

\smallskip
{\bf Case 1} (Fig. \ref{8}):
$\Lambda$ lies in the interior of $\Omega_h$ and only weak waves enter $\Lambda$. Without loss of the generality, we assume that $\Lambda$ lies in region (1). Denote $Q(\Lambda)=\Delta(\boldsymbol{\alpha}^*,\boldsymbol{\beta}^*)$, where $\Delta(\boldsymbol{\alpha}^*,\boldsymbol{\beta}^*)$ is defined in \eqref{estimate-in-w}. Then by Proposition \ref{prop-in-w}, we have
\begin{align*}
&L^1(J)-L^1(I)\leq M(K_{11}^{*}+K_{12}^{*}+K_{13}^{*}+K_{15}^{*})Q(\Lambda),\\
&Q(J)-Q(I)\leq (ML(I_0)-1)Q(\Lambda).
\end{align*}

Note that $F(I)\leq\tilde{\epsilon}$ for sufficiently small $\tilde{\epsilon}$, then it holds that
\begin{align*}
F(J)-F(I)&\leq \Big(M(K_{11}^{*}+K_{12}^{*}+K_{13}^{*}+K_{15}^{*})+K(ML(I_0)-1)\Big)Q(\Lambda)\\
&\leq -\frac{1}{2}Q(\Lambda),
\end{align*}
provided that K is suitably large.

\smallskip
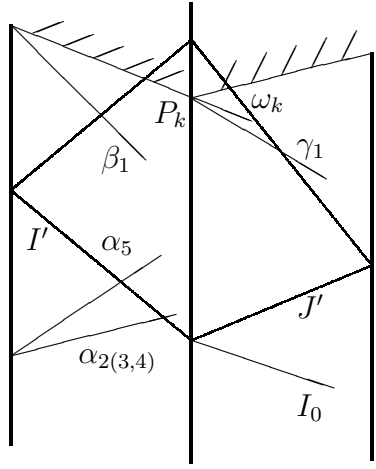
\begin{figure}[ht]
\begin{center}
\setlength{\unitlength}{1mm}
\begin{picture}
(50,60)(0,5)
\linethickness{1pt}
\put(0,62){\line(0,-1){56}}
\put(0,18){\line(4,1){22}}
\put(0,18){\line(3,2){20}}
\put(0,62){\line(1,-1){18}}
\put(0,62){\line(5,-2){32}}
\put(24,65){\line(0,-1){61.3}}
\put(48,4){\line(0,1){54.3}}
\put(24,52.3){\line(5,-3){18}}
\put(24,52.3){\line(4,1){24}}
\put(3,60.9){\line(5,2){4}}
\put(7,59.2){\line(5,2){4}}
\put(11,57.6){\line(5,2){4}}
\put(15,56){\line(5,2){4}}
\put(19,54.4){\line(5,2){4}}
\put(28,53.3){\line(1,2){2}}
\put(32,54.3){\line(1,2){2}}
\put(36,55.3){\line(1,2){2}}
\put(40,56.3){\line(1,2){2}}
\put(44,57.3){\line(1,2){2}}
\thinlines
\qbezier(0,40)(6,45)(24,60)
\qbezier(0,40)(6,35)(24,20)
\qbezier(24,60)(28,55)(48,30)
\qbezier(24,20)(36,25)(48,30)
\put(24,20){\line(3,-1){19}}
\put(2,32){$I'$}
\put(38,23){$J'$}
\put(38,10){$I_0$}
\put(19,49){$P_{k}$}
\put(32,51){$\omega_{k}$}
\put(38,45){$\gamma_1$}
\put(12,43){$\beta_1$}
\put(12,32){$\alpha_5$}
\put(9,17){$\alpha_{2(3,4)}$}
\end{picture}
\end{center}
\caption{Case 2: near the boundary.}\label{9}
\end{figure}

{\bf Case 2} (Fig. \ref{9}):
$\Lambda$ touches the approximate boundary $\partial\Omega_h$, and $\Gamma_I=\Gamma_J\cup\{P_k\}$ for certain $k$.
%Let $\beta_1$ and $\alpha_i,i=2,3,4,5$ be the weak waves entering $\Lambda$ through $I'$.
Using Proposition \ref{prop-in-b}, we can obtain
\begin{align*}
&L_0(J)-L_0(I)=-|\omega_k|,\\
&L_{1}^{2}(J)-L_{1}^{2}(I)\leq |K_{b0}||\omega_{k}|+\sum\limits_{i=2,3,5}|K_{bi}||\alpha_i|,\\
&L_{i}^{2}(J)-L_{i}^{2}(I)=-|\alpha_i|,\quad i=2,3,4,5.\\
&Q(J)-Q(I)\leq (|K_{b0}||\omega_{k}|+\sum\limits_{i=2,3,5}|K_{bi}||\alpha_i|)L(I_0).
\end{align*}
It implies that
\[
L^{2}(J)-L^{2}(I)\leq (|K_{b0}|-K^{*}_{20})|\omega_{k}|+\sum\limits_{i=2,3,5}(|K_{bi}|-K^{*}_{2i})|\alpha_i|.
\]

Therefore, if $F(I)\leq\tilde{\epsilon}$ for sufficiently small $\tilde{\epsilon}$, then it holds that $F(J)\leq F(I)$ by the choice of $K^{*}_{20}$ and $K^{*}_{2i}$ in Lemma \ref{lemma-choice}.

\smallskip
{\bf Case 3.1} (Fig. \ref{10}):
The diamond $\Lambda$ covers $\chi^{(k-1)}$ and the weak waves lying in region (2) interact with $\chi^{(k-1)}$ from the above. By applying Proposition \ref{prop-in-c1}, we have
\begin{align*}
&L_{1}^{1}(J)-L_{1}^{1}(I)\leq |K_{21}||\beta_1|+M\Delta'(\alpha_5,\boldsymbol{\beta}^{*}),\\
&L_{i}^{2}(J)-L_{i}^{2}(I)=-|\beta_i|,\quad i=1,2,3,4,\\
&L_{5}^{2}(J)-L_{5}^{2}(I)\leq |K_{25}||\beta_1|+M\Delta'(\alpha_5,\boldsymbol{\beta}^{*}),\\
&|\sigma_j^{J}-\sigma_j^{I}|\leq |K_{2i}||\beta_1|+|\beta_j|+M\Delta'(\alpha_5,\boldsymbol{\beta}^{*}),\quad j=2,3\\
&|\gamma_4^{J}-\gamma_4^{I}|=|\beta_4|,\\
&Q(J)-Q(I)\leq (|K_{21}|+|K_{25}|)|\beta_1|L(I_0)+(ML(I_0)-1)\Delta'(\alpha_5,\boldsymbol{\beta}^{*}).
\end{align*}
It implies that
\begin{align*}
L^1(J)+L^2(J)-L^1(I)-L^2(I)&\leq (K^{*}_{11}|K_{21}|+K^{*}_{25}|K_{25}|-1)|\beta_1|
-\sum\limits_{i=2,3,4}K^{*}_{2i}|\beta_i|
 +M\Delta'(\alpha_5,\boldsymbol{\beta}^{*}).
\end{align*}

Therefore, if $F(I)\leq\tilde{\epsilon}$ for sufficiently small $\tilde{\epsilon}$, then from the facts that $K^{*}_{11}|K_{21}|+K^{*}_{25}|K_{25}|<1$ and that $K_{24}^{*}>C_{2}^{*}$ by Lemma \ref{lemma-choice}, it holds that $F(J)\leq F(I)$ by choosing suitably small $C_{1}^{*}$ and suitably large $K$.

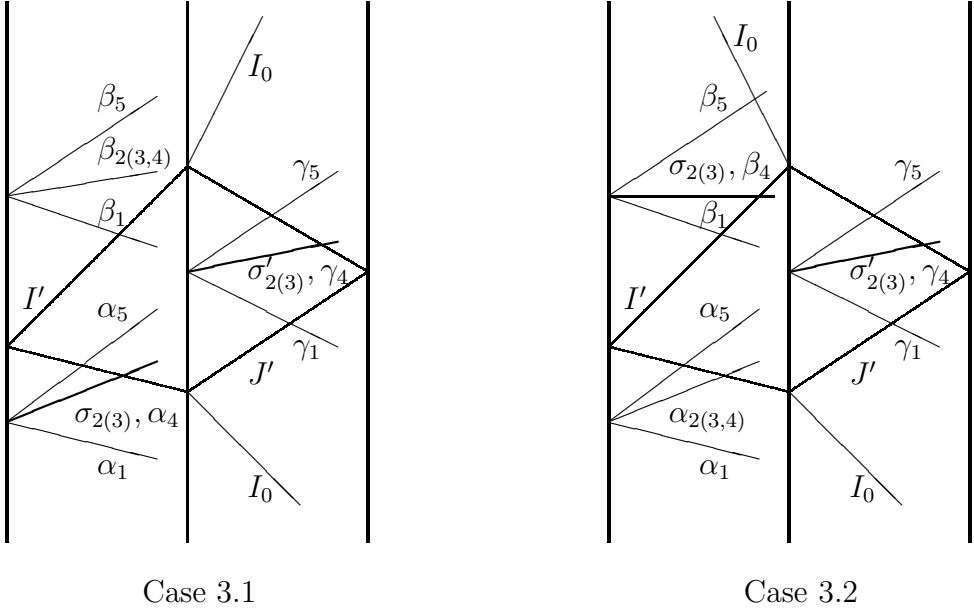
\begin{figure}[ht]
\begin{center}
\setlength{\unitlength}{1mm}
\begin{picture}
(70,72)(30,2)
\linethickness{1pt}
\put(0,4){\line(0,1){72}}
\put(24,4){\line(0,1){72}}
\put(48,4){\line(0,1){72}}
\put(0,50){\line(3,2){20}}
\put(0,50){\line(3,-1){20}}
\put(0,50){\line(6,1){20}}
\put(0,20){\line(4,3){20}}
\thicklines
\put(0,20){\line(5,2){20}}
\thinlines
\put(0,20){\line(4,-1){20}}
\put(24,40){\line(2,-1){20}}
\put(24,40){\line(3,2){20}}
\thicklines
\put(24,40){\line(5,1){20}}
\thinlines
\qbezier(0,30)(12,42)(24,54)
\qbezier(0,30)(12,27)(24,24)
\qbezier(24,24)(36,32)(48,40)
\qbezier(24,54)(36,47)(48,40)
\put(24,24){\line(1,-1){15}}
\put(24,54){\line(1,2){10}}
\put(2,35){$I'$}
\put(32,25){$J'$}
\put(32,10){$I_0$}
\put(32,66){$I_0$}
\put(12,62){$\beta_5$}
\put(12,55){$\beta_{2(3,4)}$}
\put(12,46.7){$\beta_1$}
\put(12,34){$\alpha_5$}
\put(9,20){$\sigma_{2(3)},\alpha_{4}$}
\put(12,13){$\alpha_1$}
\put(38,53){$\gamma_5$}
\put(32,39){$\sigma'_{2(3)},\gamma_{4}$}
\put(38,29){$\gamma_1$}

\linethickness{1pt}
\put(80,4){\line(0,1){72}}
\put(104,4){\line(0,1){72}}
\put(128,4){\line(0,1){72}}
\put(80,50){\line(3,2){21}}
\put(80,50){\line(3,-1){20}}
\thicklines
\put(80,50){\line(1,0){22}}
\thinlines
\put(80,20){\line(4,3){20}}
\put(80,20){\line(5,2){20}}
\put(80,20){\line(4,-1){20}}
\put(104,40){\line(2,-1){20}}
\put(104,40){\line(3,2){20}}
\thicklines
\put(104,40){\line(5,1){20}}
\thinlines
\qbezier(80,30)(92,42)(104,54)
\qbezier(80,30)(92,27)(104,24)
\qbezier(104,24)(116,32)(128,40)
\qbezier(104,54)(116,47)(128,40)
\put(104,24){\line(1,-1){15}}
\put(104,54){\line(-1,2){10}}
\put(82,35){$I'$}
\put(112,25){$J'$}
\put(112,10){$I_0$}
\put(96.5,70){$I_0$}
\put(92,62){$\beta_5$}
\put(88,53){$\sigma_{2(3)},\beta_{4}$}
\put(92,46.5){$\beta_1$}
\put(92,34){$\alpha_5$}
\put(88,20){$\alpha_{2(3,4)}$}
\put(92,13){$\alpha_1$}
\put(118,53){$\gamma_5$}
\put(112,39){$\sigma'_{2(3)},\gamma_{4}$}
\put(118,29){$\gamma_1$}
\put(18,-4){Case 3.1}
\put(98,-4){Case 3.2}
\end{picture}
\end{center}
\caption{Near the strong contact discontinuity.}\label{10}
\end{figure}

{\bf Case 3.2} (Fig. \ref{10}):
The diamond $\Lambda$ covers $\chi^{(k-1)}$ and the weak waves lying in region (1) interact with $\chi^{(k-1)}$ from the below. By Proposition \ref{prop-in-c2}, we can obtain
\begin{align*}
&L_{1}^{1}(J)-L_{1}^{1}(I)\leq |K_{11}||\alpha_5|+M\Delta''(\boldsymbol{\alpha}^{*},\beta_1),\\
&L_{i}^{1}(J)-L_{i}^{1}(I)=-|\alpha_i|,\quad i=2,3,4,5,\\
&L_{5}^{2}(J)-L_{5}^{2}(I)\leq |K_{15}||\alpha_5|+M\Delta''(\boldsymbol{\alpha}^{*},\beta_1),\\
&|\sigma_j^{J}-\sigma_j^{I}|\leq |K_{1i}||\alpha_5|+|\alpha_j|+M\Delta''(\boldsymbol{\alpha}^{*},\beta_1),\quad j=2,3\\
&|\gamma_4^{J}-\gamma_4^{I}|=|\alpha_4|,\\
&Q(J)-Q(I)\leq (|K_{11}|+|K_{15}|)|\alpha_5|L(I_0)+(ML(I_0)-1)\Delta''(\boldsymbol{\alpha}^{*},\beta_1).
\end{align*}
It implies that
\begin{align*}
L^1(J)+L^2(J)-L^1(I)-L^2(I)&\leq (K^{*}_{11}|K_{11}|+K^{*}_{25}|K_{15}|-K^{*}_{15})|\alpha_5|
-\sum\limits_{i=2,3,4}K^{*}_{1i}|\alpha_i|
+M\Delta''(\boldsymbol{\alpha}^{*},\beta_1).
\end{align*}

So if $F(I)\leq\tilde{\epsilon}$ for sufficiently small $\tilde{\epsilon}$, then from the facts that $K^{*}_{11}|K_{11}|+K^{*}_{25}|K_{15}|<K^{*}_{15}$ and $K_{14}^{*}>C_{2}^{*}$ by Lemma \ref{lemma-choice}, it holds that $F(J)\leq F(I)$ by choosing suitably small $C_{1}^{*}$ and suitably large $K$.
\end{proof}

In order to analyze the effect of the exothermic reaction on the functionals $L$ and $Q$, as in \cite{ChenWagner2003}, we introduce a new mesh curve $\tilde{J}$, which, as a curve, is the same as the mesh
curve J, but upon which the states $\tilde{U}$ are the values of the states $U$ on $J$ after a single reaction step along $J$. 

Let $J_k$ and $\tilde{J}_k$ be the $k$-mesh curve lying in $\{kh\leq x\leq (k+1)h\}$.
By Proposition \ref{prop-f}, we have
\begin{corollary}
Suppose that $g(x)$ satisfies \eqref{g}. Let $\epsilon,\hat{\epsilon},\tilde{\epsilon}$ be the constants given in Proposition $\mathrm{\ref{prop-f}}$ such that the induction hypotheses $A_{1}(k-1)$-$A_{3}(k-1)$ hold. If $F(\tilde{J}_{k-1})\leq\tilde{\epsilon}$, then it holds that
\begin{equation}\label{estimate-FJ_k-1}
F(J_{k})\leq F(\tilde{J}_{k-1}).
\end{equation}
\end{corollary}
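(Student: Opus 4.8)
The plan is to deduce \eqref{estimate-FJ_k-1} by iterating Proposition \ref{prop-f} across the single layer of diamonds that separates $\tilde J_{k-1}$ from $J_k$. Recall from Section \ref{Construction} that the states carried by $\tilde J_{k-1}$ are exactly the post-reaction values $\widetilde{U}_{k,n}^{0}$ defined in \eqref{step-h}, and that these are the Cauchy data at $\{x=kh\}$ for the Riemann problems \eqref{T0} and \eqref{Tn} posed in the diamonds $\{T_{k,n}\}_{n\leq 0}$; sampling the resulting self-similar solutions at the level $\theta_{k+1}$ produces the states on $J_k$. Hence $J_k$ is reached from $\tilde J_{k-1}$ through a finite chain of mesh curves
\[
\tilde J_{k-1}=I^{(0)},\ I^{(1)},\ \dots,\ I^{(N_k)}=J_k,
\]
in which each $I^{(\ell+1)}$ is an immediate successor of $I^{(\ell)}$, the diamond $\Lambda^{(\ell)}$ between them being an interior diamond, the boundary diamond $T_{k,0}$, or the diamond crossed by the strong contact discontinuity $\chi^{(k)}$ — that is, one of the configurations already analysed in the proof of Proposition \ref{prop-f}.

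The next step is an induction on $\ell$ showing that the hypotheses of Proposition \ref{prop-f} hold at each $I^{(\ell)}$, from which $F(I^{(\ell+1)})\leq F(I^{(\ell)})$ follows at every link. At $\ell=0$ the smallness $F(\tilde J_{k-1})\leq\tilde\epsilon$ is the hypothesis of the corollary, while the localizations $U_h|_{\tilde J_{k-1}\cap(\Omega_{h,k-1}^{(i)}\cup\Omega_{h,k}^{(i)})}\in O_\epsilon({U}_i^{(0)})$, $|\sigma_j^{\tilde J_{k-1}}-\sigma_{j0}|<\hat\epsilon$ and $|\gamma_4^{\tilde J_{k-1}}|<\hat\epsilon$ follow from the induction hypotheses $A_1(k-1)$-$A_2(k-1)$ together with Lemma \ref{lemma-Re}, which guarantees that the reaction step displaces each state only by $O(1)Zh$ and therefore, for $h$ small, does not leave the relevant neighbourhoods. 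For the inductive step, suppose the hypotheses hold at $I^{(\ell)}$; then Proposition \ref{prop-f} yields $F(I^{(\ell+1)})\leq F(I^{(\ell)})\leq F(\tilde J_{k-1})\leq\tilde\epsilon$, so the smallness hypothesis persists at $I^{(\ell+1)}$, and the localization persists as well because the new wave strengths created when crossing $\Lambda^{(\ell)}$ are all controlled by $F(I^{(\ell)})$ via the interaction estimates of Propositions \ref{prop-in-w}, \ref{prop-in-b}, \ref{prop-in-c1} and \ref{prop-in-c2}; consequently the states, and the numbers $\sigma_j$ and $\gamma_4$, change by at most $O(1)\tilde\epsilon$ along the chain and stay inside $O_\epsilon({U}_i^{(0)})$, $O_{\hat\epsilon}(\sigma_{j0})$ and $O_{\hat\epsilon}(0)$ once $\tilde\epsilon$ has been fixed small enough.

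Chaining $F(I^{(\ell+1)})\leq F(I^{(\ell)})$ over $\ell=0,\dots,N_k-1$ then gives $F(J_k)=F(I^{(N_k)})\leq F(I^{(0)})=F(\tilde J_{k-1})$, which is \eqref{estimate-FJ_k-1}. If the mesh curves are regarded as genuinely unbounded in $y$, the same reasoning applies to every compact truncation, since $F$ is unaffected outside a compact set: the total variation of $U_h$ is finite and, by the uniform closeness of $U_1$ to ${U}_1^{(0)}$ in \eqref{assum-u-1}, the states on any mesh curve remain in $O_\epsilon({U}_1^{(0)})$ far down the flow.

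The step I expect to be the main obstacle — really the only nontrivial point — is the bookkeeping in the inductive step: the threshold $\tilde\epsilon$ must be chosen once and for all, uniformly in $k$ and in $\ell$, so that the verification of the localization hypotheses at the intermediate curves never degrades. This is exactly where the design of the Glimm-type functional in Definition \ref{def-f} is used: that $L(J)$ dominates the total variation of $U_h$ across $J$ in each region while $Q(J)$ is quadratically small, and that the weights $K^{*}_{1i}$, $K^{*}_{2i}$, $C^{*}_1$, $C^{*}_2$, $K$ supplied by Lemma \ref{lemma-choice} are independent of $k$, so that the decay inequality of Proposition \ref{prop-f} holds with constants that do not deteriorate along the layer.
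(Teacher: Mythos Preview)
Your proposal is correct and follows the same approach as the paper, which simply states the corollary as an immediate consequence of Proposition~\ref{prop-f} without further proof. Your argument is just the explicit diamond-by-diamond iteration of Proposition~\ref{prop-f} along the layer between $\tilde J_{k-1}$ and $J_k$, together with the routine check that the localization hypotheses persist along the chain; this is exactly what the paper leaves implicit.
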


\smallskip
Next, let us consider the estimates on the reacting step.
\begin{proposition}\label{prop:3.2}
There exists a positive constant $M$ such that
\begin{equation}\label{estimate-J_k}
\begin{split}
&L(\tilde{J_{k}})\leq L(J_{k})+M e^{-lkh}h\|Z_{0}\|_{\infty}\big(L(J_k)+1\big),\\
&Q(\tilde{J_{k}})\leq Q(J_{k})+M e^{-lkh}h\|Z_{0}\|_{\infty}\big(L(J_k)+1\big)^2.
\end{split}
\end{equation}
It implies that
\begin{equation}\label{estimate-FJ_k-2}
F(\tilde{J_{k}})\leq F(J_{k})+M e^{-lkh}h\|Z_{0}\|_{\infty}\big(F(J_k)+2\big)^2.
\end{equation}
Here $\|\cdot\|_{\infty}$ stands for $L^{\infty}$ norm.
\end{proposition}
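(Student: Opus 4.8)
The plan is to track what the reaction step does along the mesh curve $J_k$, controlling each piece of $F=L+KQ$ by the local reaction estimates of Propositions \ref{prop-Re1}--\ref{prop-Re3}, and then to extract the exponential factor from Lemma \ref{lemma-Re}. Two decay facts will serve as input, both obtained by iterating Lemma \ref{lemma-Re} from $x=0$: the pointwise bound $\|Z\|_{L^\infty}\le e^{-lkh}\|Z_0\|_\infty$ along $J_k$ (each reacting step contracts $\|Z\|_{L^\infty}$ by the factor $e^{-lh}$, while the random sampling and the non-reacting Riemann steps do not increase it); and the decay of the reactant wave content, $L_4^1(J_k)+L_4^2(J_k)+|\gamma_4^{J_k}|\le C e^{-lkh}\|Z_0\|_\infty$, which follows from $\tilde Z=(1-\phi(T)h/u)Z$ together with the Lipschitz continuity of $\phi$ and the already-available bound on $T.V.(V)$ along $J_k$. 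The lower bound $\phi\ge\phi_0>0$ from $\mathrm{(H3)}$ keeps $l>0$ uniform. This is the ``exponential decay of the reactant'' advertised in the introduction, and it is what forces the whole one-step error in \eqref{estimate-J_k} to carry the factor $e^{-lkh}$.

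Next I would split $L$. The boundary-angle part $L_0$ does not depend on $U$, so $L_0(\tilde{J}_k)=L_0(J_k)$. For $L_c$ and the weak-wave parts $L_j^i$, process the curve bundle by bundle: between two consecutive mesh points the waves form one of the three configurations of Propositions \ref{prop-Re1}--\ref{prop-Re3}, which give, in the upper-bound direction we need, $|\tilde\alpha_j|\le|\alpha_j|+O(1)h\big(|\boldsymbol{\alpha}^{*}|Z_a+|\gamma_4|\big)$ for the weak strengths with $j\neq4$; $|\tilde\gamma_4|\le|\gamma_4|+O(1)h|\boldsymbol{\alpha}^{*}|Z_a$ for the reactant strength, the contracting factor $1-\phi(T_b)h/u_b<1$ only helping; and, for the one boundary bundle and the one bundle across the contact, where the perturbations carry no weak-wave prefactor, $|\tilde\gamma_1|\le|\gamma_1|+O(1)Z_ah$ and $|\tilde\sigma_j-\sigma_{j0}|\le|\sigma_j-\sigma_{j0}|+O(1)(Z_a+|\gamma_4|)h$. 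Summing over all bundles: the terms with a weak-wave prefactor contribute $O(1)e^{-lkh}h\|Z_0\|_\infty\sum|\boldsymbol{\alpha}^{*}|\le O(1)e^{-lkh}h\|Z_0\|_\infty L(J_k)$; the $O(1)|\gamma_4|h$ terms sum to $O(1)h\big(L_4^1+L_4^2+|\gamma_4^{J_k}|\big)\le O(1)e^{-lkh}h\|Z_0\|_\infty$ by the reactant-content decay; and the prefactor-free boundary and contact terms contribute $O(1)e^{-lkh}h\|Z_0\|_\infty$. Collecting these yields $L(\tilde{J}_k)\le L(J_k)+M e^{-lkh}h\|Z_0\|_\infty\big(L(J_k)+1\big)$, the ``$+1$'' absorbing precisely the prefactor-free boundary, contact, and reactant-content contributions.

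For $Q$ I would use that $4$-waves do not appear in $Q$, so only the changes of the $j\neq4$ strengths enter, and that sign changes produced by the $O(h)$ perturbations are themselves $O(h)$ in size, hence negligible, so the family of approaching pairs may be held fixed. From $|\tilde\alpha||\tilde\beta|\le|\alpha||\beta|+|\alpha|\,|\tilde\beta-\beta|+|\beta|\,|\tilde\alpha-\alpha|+|\tilde\alpha-\alpha|\,|\tilde\beta-\beta|$, summing over approaching pairs, the new terms are bounded by $\big(\sum|\alpha|\big)\big(\sum|\tilde\beta-\beta|\big)$ plus a higher-order term, and $\sum|\tilde\beta-\beta|$ is the total weak-strength change already estimated above, namely $\le M e^{-lkh}h\|Z_0\|_\infty(L(J_k)+1)$; hence $Q(\tilde{J}_k)\le Q(J_k)+M e^{-lkh}h\|Z_0\|_\infty\big(L(J_k)+1\big)^2$. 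Finally $F(\tilde{J}_k)=L(\tilde{J}_k)+KQ(\tilde{J}_k)\le F(J_k)+M(1+K)e^{-lkh}h\|Z_0\|_\infty\big(L(J_k)+1\big)^2$, and since $Q\ge0$ and $K>0$ we have $L(J_k)+1\le F(J_k)+2$, so after renaming the constant we obtain $F(\tilde{J}_k)\le F(J_k)+M e^{-lkh}h\|Z_0\|_\infty\big(F(J_k)+2\big)^2$, which is \eqref{estimate-FJ_k-2}.

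The main obstacle is the bookkeeping in the second paragraph: one must show that the reaction error at step $k$ is not merely $O(h)$ but $O(e^{-lkh}h)$, so that summing over $k$ --- where $\sum_k e^{-lkh}h$ is uniformly bounded in $h$ --- keeps the Glimm-type functional bounded over all of $\Omega_h$. The $|\gamma_4|$-type error terms are exactly where this is delicate: their pointwise size $O(e^{-lkh}\|Z_0\|_\infty)$ is not enough, because the number of mesh points on $J_k$ grows with $k$, so one genuinely needs the exponential decay of the \emph{total} reactant wave strength, which ultimately rests on $\phi$ being bounded below, i.e. on assumption $\mathrm{(H3)}$.
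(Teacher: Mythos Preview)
Your overall plan---apply Propositions \ref{prop-Re1}--\ref{prop-Re3} bundle by bundle, use the pointwise decay $\|Z\|_\infty\le e^{-lkh}\|Z_0\|_\infty$ from Lemma \ref{lemma-Re}, then bound $Q$ via the changes in the individual strengths and pass to $F$---matches the paper's. The one substantive difference is how you dispose of the $O(1)|\gamma_4|h$ error terms that appear in \eqref{estimate-Re1} and \eqref{estimate-Re3} for the non-reactant strengths.

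The paper does \emph{not} invoke any decay of the total reactant wave content $L_4^1+L_4^2+|\gamma_4^{J_k}|$. Instead it exploits the freedom, recorded in Lemma \ref{lemma-choice}, to take the weights $K^*_{14}$, $K^*_{24}$ and $C^*_2$ large. From \eqref{estimate-Re1} one has, in each interior diamond, a \emph{gain} $-l|\gamma_4|h$ on the $4$-strength against a \emph{loss} $+M|\gamma_4|h$ on the others; multiplying the former by a sufficiently large $K^*_{14}$ makes the net $|\gamma_4|h$ contribution to $L^1$ nonpositive, and similarly a large $C^*_2$ handles the contact diamond via \eqref{estimate-Re3}. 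Thus in the paper the $|\gamma_4|h$ terms are cancelled \emph{inside the weighted functional} rather than summed and bounded after the fact.

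Your route---sum the $|\gamma_4|h$ terms to $O(1)h\,(L_4^1+L_4^2+|\gamma_4^{J_k}|)$ and then bound this by $O(1)e^{-lkh}h\|Z_0\|_\infty$---can be made to work, but the inequality $L_4^1(J_k)+L_4^2(J_k)+|\gamma_4^{J_k}|\le Ce^{-lkh}\|Z_0\|_\infty$ is not as immediate as you suggest. It is a statement about $T.V.(Z)$, not about $\|Z\|_\infty$, and it requires its own induction in $k$, using both the contraction $\tilde\gamma_4=(1-\phi h/u)\gamma_4+O(1)|\boldsymbol{\gamma}^{*}|Z_a h$ and the already-established smallness of $L(J_i)$ for $i\le k$. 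A careful tracking gives only $L_4(J_k)\le C(1+kh)e^{-lkh}\|Z_0\|_\infty$, i.e.\ exponential decay at any rate $l'<l$; this is still summable and suffices, but the argument then has to be interleaved with the main induction rather than quoted as a standalone input. The paper's mechanism sidesteps this entirely by building the absorption into the choice of weights, which is why $K^*_{14}$ and $C^*_2$ are left free in Lemma \ref{lemma-choice}.
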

\begin{proof}
By lemma \ref{lemma-Re} and by the induction method, we can easily obtain that
\begin{equation}\label{3.11x}
\|Z_{h,\theta}(kh+,\cdot)\|_{\infty}\leq e^{-lkh}\|Z_0\|_{\infty}.
\end{equation}

Then we will consider the change of $L$ on the reaction step, which is the first inequality of \eqref{estimate-J_k}. The analysis is divided into three cases depending on the location of $\Lambda$.
\begin{enumerate}[(1)]
\item  $\Lambda$ lies in the interior of $\Omega_h$ so that only weak waves $\gamma$ go out of $\Lambda$ through ${J}_k$. Without loss of the generality, we assume that $\Lambda$ lies in region in region (1).  With notations in Proposition \ref{prop-Re1}, we use \eqref{estimate-Re1} to deduce the following,
\begin{align*}
&L^{1}_{i}(\tilde{J}_k)\leq L^{1}_{i}({J}_k)+Me^{-lkh}h\|Z_0\|_{\infty}|\boldsymbol{\gamma}^{*}|+M|\gamma_4|h,\\
&L^{1}_{4}(\tilde{J}_k)\leq L^{1}_{4}({J}_k)+Me^{-lkh}h\|Z_0\|_{\infty}|\boldsymbol{\gamma}^{*}|-l|\gamma_4|h.
\end{align*}
Then it holds that $L(\tilde{J_{k}})\leq L(J_{k})+M e^{-lkh}h\|Z_{0}\|_{\infty}|\boldsymbol{\gamma}^{*}|$ by choosing suitably large $K^{*}_{14}$.
\item $\Lambda$ covers a part of $\partial\Omega_h$. It follows from \eqref{estimate-Re2} that $L(\tilde{J_{k}})\leq L(J_{k})+M e^{-lkh}h\|Z_{0}\|_{\infty}.$
\item $\Lambda$ covers the strong contact discontinuity $\chi^{(k)}$ so that $\chi^{(k)}$ with strength $(\sigma_2^{(k)},{\sigma}_3^{(k)},\gamma_4^{(k)})$ goes out of $\Lambda$ through ${J}_k$. By \eqref{estimate-Re3}, we can obtain
\begin{align*}
&L^{1}_{1}(\tilde{J}_k)\leq L^{1}_{1}({J}_k)+Me^{-lkh}h\|Z_0\|_{\infty}+M|\gamma_4^{(k)}|h,\\
&L^{2}_{5}(\tilde{J}_k)\leq L^{2}_{5}({J}_k)+Me^{-lkh}h\|Z_0\|_{\infty}+M|\gamma_4^{(k)}|h,\\
&|\tilde{\sigma}_j^{(k)}-\sigma_j^{(k)}|\leq Me^{-lkh}h\|Z_0\|_{\infty}+M|\gamma_4^{(k)}|h,\,\,\,j=2,3,\\
&|\tilde{\gamma}_4^{(k)}-\gamma_4^{(k)}|\leq Me^{-lkh}h\|Z_0\|_{\infty}-l|\gamma_4^{(k)}|h.
\end{align*}
Then it holds that $L(\tilde{J_{k}})\leq L(J_{k})+M e^{-lkh}h\|Z_{0}\|_{\infty}$ by choosing suitably large $C_{2}^{*}$.
\end{enumerate}

Thus, Combining these three cases, we proved the first inequality of $\eqref{estimate-J_k}$.

%Finally, for the weak waves $\alpha_j,\beta_i$ and $\tilde{\alpha}_j,\tilde{\beta}_i$ which approach and %across $J_k$ and $\tilde{J}_k$ respectively, it holds that
%\begin{align*}
%Q(\tilde{J_{k}})-Q(J_{k})&=\sum(|\tilde{\alpha}_j||\tilde{\beta}_i|-|\alpha_j||\beta_i|)\\
%&=\sum\{|\tilde{\alpha}_j|(|\tilde{\beta}_i|-|\beta_i|)+(|\tilde{\alpha}_j|-|\alpha_j|)|\beta_i|\}\\
%&\leq M e^{-lkh}h\|Z_{0}\|_{\infty}\big(L(J_k)+1\big)^2.
%\end{align*}
The second estimate in (\ref{estimate-J_k}) and the estimate in (\ref{estimate-FJ_k-2}) can be derived in the same way.
The proof is complete.
\end{proof}

Now in order to obtain the uniform bound of the total variation of $U_{h,\theta}$, we introduce the following functional:
\[
F_{c}(J_k)=F(J_k)+K_z\sum_{j=k+1}^{\infty}e^{-ljh}h\|Z_{0}\|_{\infty},
\]
where constant $K_z$ will be defined later.

\begin{lemma}\label{lem:3.2}
There exist positive constants $K_z$ and $\tilde{\epsilon}$, such that if $F_{c}(\tilde{J}_{k-1})\leq\tilde{\epsilon}$, then it holds that
\begin{equation}\label{estimate-Fc}
F_{c}(\tilde{J}_k)\leq F_c(\tilde{J}_{k-1}),
\end{equation}
and
\[
U_h|_{\Omega_{h,k}^{(i)}}\in O_{\epsilon}({U}_i^{(0)}),\,\,i=1,2,\quad |\sigma^{(k)}_{j}-\sigma_{j0}|<\hat{\epsilon},\,\,j=2,3,\quad |\gamma_{4}^{(k)}|<\hat{\epsilon}.
\]
\end{lemma}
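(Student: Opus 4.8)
The plan is to run the inductive step by splitting the passage from $\tilde J_{k-1}$ to $\tilde J_k$ into the non-reacting Glimm substep (from $\tilde J_{k-1}$ to $J_k$, controlled by \eqref{estimate-FJ_k-1}) and the reacting substep (from $J_k$ to $\tilde J_k$, controlled by Proposition~\ref{prop:3.2}), and to absorb the extra growth produced by the exothermic reaction into the geometric tail $K_z\sum_{j\ge k}e^{-ljh}h\|Z_0\|_\infty$ that has been built into $F_c(\tilde J_{k-1})$. Throughout I carry the standing induction hypotheses $A_1(k-1)$--$A_3(k-1)$; these are inherited from applying the lemma at the previous level, and the base level is checked directly from \eqref{assum-g}--\eqref{assum-u-2} together with the bound $\|Z_0\|_\infty\le M\delta_0$ explained below.

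First I would observe that $F_c(\tilde J_{k-1})\le\tilde\epsilon$ forces $F(\tilde J_{k-1})\le\tilde\epsilon$ as well, since the tail term is nonnegative. Hence the Corollary with \eqref{estimate-FJ_k-1} applies and gives $F(J_k)\le F(\tilde J_{k-1})$; in particular $F(J_k)\le\tilde\epsilon\le 1$, so $(F(J_k)+2)^2\le 9$ and \eqref{estimate-FJ_k-2} reads $F(\tilde J_k)\le F(J_k)+M'e^{-lkh}h\|Z_0\|_\infty$ with $M'=9M$. Since $e^{-lkh}h\|Z_0\|_\infty$ is precisely the $j=k$ term of the tail, writing $F_c(\tilde J_{k-1})=F(\tilde J_{k-1})+K_z e^{-lkh}h\|Z_0\|_\infty+K_z\sum_{j\ge k+1}e^{-ljh}h\|Z_0\|_\infty$ and $F_c(\tilde J_k)=F(\tilde J_k)+K_z\sum_{j\ge k+1}e^{-ljh}h\|Z_0\|_\infty$ and chaining the two estimates above yields $F_c(\tilde J_{k-1})-F_c(\tilde J_k)\ge (K_z-M')e^{-lkh}h\|Z_0\|_\infty\ge 0$ as soon as $K_z\ge M'$. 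This fixes $K_z$ and proves \eqref{estimate-Fc}.

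For the second conclusion I would iterate \eqref{estimate-Fc} down to level $0$ and bound $F_c(J_0)$ directly. The point to be used carefully here is that assumption \eqref{assum-u-1} together with $Z_1^{(0)}=Z_2^{(0)}=0$ forces $\|Z_0\|_\infty\le M\delta_0$, while $\sum_{j\ge 1}e^{-ljh}h\le 1/l$ uniformly in $h$; combined with \eqref{assum-g}--\eqref{assum-u-2} this gives $F_c(\tilde J_k)\le F_c(J_0)\le M\delta_0$. Choosing $\delta_0$ small so that $M\delta_0\le\tilde\epsilon$ and also small relative to $\epsilon$ and $\hat\epsilon$, the bound $L(\tilde J_k)\le F(\tilde J_k)\le M\delta_0$ controls the total variation of $U_{h,\theta}$ along $\tilde J_k$; then the standard Glimm-type argument (as in \cite{Glimm1965}), using Lemma~\ref{lemma-R1}, the wave parametrization of Section~\ref{secA:1}, and the explicit contact relations \eqref{c1}--\eqref{c3}, shows that $U_{h,\theta}$ remains in $O_\epsilon({U}_i^{(0)})$ on each side of $\chi^{(k)}$ and that $\sigma_j^{(k)}\in O_{\hat\epsilon}(\sigma_{j0})$, $\gamma_4^{(k)}\in O_{\hat\epsilon}(0)$, which closes $A_1(k)$--$A_3(k)$.

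The main obstacle is not a single computation but the interplay of the two functionals: one must verify that the quadratic reaction error in \eqref{estimate-FJ_k-2} is genuinely absorbable, which hinges on already knowing that $F(J_k)$ is small, and on the exponential decay $\|Z_{h,\theta}(kh+,\cdot)\|_\infty\le e^{-lkh}\|Z_0\|_\infty$ (established in the proof of Proposition~\ref{prop:3.2}) making the compensating series summable. Conceptually this is where assumption $\mathrm{(H3)}$ and the smallness of $\|Z_0\|_\infty$ enter decisively: the reaction can enlarge the total variation only by an amount comparable to $\|Z_0\|_\infty/l$, so that the cumulative effect over all steps stays $O(\delta_0)$ rather than $O(1)$, which is exactly what keeps the approximate solutions confined to the neighbourhoods $O_\epsilon({U}_i^{(0)})$.
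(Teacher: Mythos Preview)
Your argument is correct and follows the same two-substep strategy as the paper: apply \eqref{estimate-FJ_k-1} for the non-reacting passage $\tilde J_{k-1}\to J_k$, then \eqref{estimate-FJ_k-2} for the reacting passage $J_k\to\tilde J_k$, and absorb the quadratic reaction error into the geometric tail by taking $K_z$ large. The one noteworthy difference is in justifying the second conclusion: the paper anchors the solution at $y=-\infty$, observing that $\lim_{y\to-\infty}Z_1(y)=0$ forces $U_{h,\theta}(kh+,-\infty)=\lim_{y\to-\infty}U_0(y)$ (the reaction step is trivial there), and then uses Lemma~\ref{lemma-R1} and \eqref{diff-estimate-f} to propagate the neighbourhood bound from this fixed point via the small total variation; you instead bound the cumulative reaction drift by $\|Z_0\|_\infty/l=O(\delta_0)$ using \eqref{assum-u-1}. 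Both routes close the argument, but the paper's anchor-at-infinity device is slightly sharper in that it does not require $\|Z_0\|_\infty$ itself to be small, only its decay at $-\infty$.
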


\begin{proof}
From the estimates \eqref{estimate-FJ_k-1} and \eqref{estimate-FJ_k-2}, we have
\[
F(\tilde{J_{k}})\leq F(\tilde{J}_{k-1})+M e^{-lkh}h\|Z_{0}\|_{\infty}\big(F(J_k)+2\big)^2.
\]
It implies
\begin{align*}
F_{c}(\tilde{J}_k)-F_{c}(\tilde{J}_{k-1})&=F(\tilde{J}_k)-F(\tilde{J}_{k-1})-K_z e^{-lkh}h\|Z_{0}\|_{\infty}\\
&\leq (M\big(F(J_k)+2\big)^2-K_z) e^{-lkh}h\|Z_{0}\|_{\infty}.
\end{align*}

Note that $F(J_k)<\tilde{\epsilon}$. So we can choose suitably large $K_z$  such that $F_{c}(\tilde{J}_k)\leq F_c(\tilde{J}_{k-1})$ and $|\sigma^{(k)}_{j}-\sigma_{j0}|<\hat{\epsilon}$, $j=2,3$, and $|\gamma_{4}^{(k)}|<\hat{\epsilon}$.

Next, for any $k\geq 0$, define $U_{h,\theta}(kh+,-\infty)=\lim\limits_{y\to-\infty}U_{h,\theta}(kh+,y)$. Then by the fact that  $\lim\limits_{y\to-\infty}Z_1(y)=0$ and from the construction of the approximate solutions, we have that
\[
U_h(kh+,-\infty)=\lim\limits_{y\to-\infty}U_0(y).
\]
Then by Lemma \ref{lemma-R1} and \eqref{diff-estimate-f}, for sufficiently small $\tilde{\epsilon}$, it holds that $U_h|_{\Omega_{h,k}^{(i)}}\in O_{\epsilon}({U}_i^{(0)}),\,\,i=1,2$.
\end{proof}

Based on Proposition \ref{prop-f},  Proposition \ref{prop:3.2}, and Lemma \ref{lem:3.2}, we have the following theorems on the uniform B.V. bound of the approximate solution $U_{h,\theta}$.
\begin{theorem}\label{theorem-Uh}
Under assumptions $\mathrm{(H1)}$-$\mathrm{(H3)}$, there exist positive constants $\delta_0$ and $C$ such that, if \eqref{assum-g}-\eqref{assum-u-2} hold,
then for any ${\theta}\in \prod_{k=1}^{\infty}(-1,1)$ and h, the modified Glimm scheme defines global approximate solutions $U_{h,\theta}$ in $\Omega_h$, which satisfy $A_{1}(k)$--$A_{3}(k)$ given in section \ref{subsec:3.2} for $k\geq 0$. In addition,
\begin{equation}\label{tv-u_h}
T.V.\{U_{h,\theta}(kh-,\cdot): (-\infty,y_k]\}\leq C\delta_0,
\end{equation}
for any $k\geq 0$ and
\[
|\chi_{h,\theta}(x')-\chi_{h,\theta}(x'')|\leq C(|x'-x''|+h),
\]
for any $x',x''\geq 0$.
\end{theorem}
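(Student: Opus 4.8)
The plan is to derive all three assertions from a single induction on $k\ge 0$, powered by the monotonicity of the modified Glimm functional $F_c$: once one knows $F_c(\tilde{J}_k)\le\tilde\epsilon$ for every $k$, the global solvability of the scheme together with $A_1(k)$--$A_3(k)$, the uniform bound \eqref{tv-u_h}, and the Lipschitz continuity of $\chi_{h,\theta}$ all follow. Before starting, I would fix the hierarchy of constants in the only order that works: first the interaction constants $K^{*}_{1i},K^{*}_{2i}$ as in Lemma \ref{lemma-choice}, then $\tilde\epsilon$ from Proposition \ref{prop-f}, then $K_z$ from Lemma \ref{lem:3.2}, and only at the very end $\delta_0$. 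For the base case, $U_{h,0}$ is obtained from $U_0$ by local averaging followed by one non-reacting Riemann step in each initial diamond, so by Lemma \ref{lemma-R1} the total strength of the waves leaving $x=0$ is comparable with $T.V.\{U_0\}$; hence \eqref{assum-g}--\eqref{assum-u-2} give $F(J_0)\le M\delta_0$, and one application of Proposition \ref{prop:3.2} yields $F(\tilde{J}_0)\le M\delta_0$ with $M$ independent of $\delta_0$ and $h$. Two facts specific to this problem enter here: the background states ${U}_1^{(0)},{U}_2^{(0)}$ carry zero reactant, so \eqref{assum-u-1} forces $\|Z_0\|_{\infty}<\delta_0$; and $\sum_{j\ge 1}e^{-ljh}h\le M/l$ uniformly in $h$, so the tail term of $F_c$ is also $O(\delta_0)$. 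Therefore $F_c(\tilde{J}_0)\le M\delta_0\le\tilde\epsilon$ once $\delta_0$ is small, and $A_1(0)$--$A_3(0)$ hold by the construction in Section \ref{Construction}.

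For the inductive step I would simply chain the three estimates already available. Assuming $A_1(k-1)$--$A_3(k-1)$ and $F_c(\tilde{J}_{k-1})\le\tilde\epsilon$, one has $F(\tilde{J}_{k-1})\le\tilde\epsilon$, which is exactly the smallness required to apply Proposition \ref{prop-f}; the associated corollary then gives $F(J_k)\le F(\tilde{J}_{k-1})$ for the non-reacting half-step, and Proposition \ref{prop:3.2} controls the reacting half-step. With $K_z$ already chosen, Lemma \ref{lem:3.2} converts this into $F_c(\tilde{J}_k)\le F_c(\tilde{J}_{k-1})\le\tilde\epsilon$, together with $U_{h,\theta}|_{\Omega_{h,k}^{(i)}}\in O_{\epsilon}({U}_i^{(0)})$ for $i=1,2$, $|\sigma_j^{(k)}-\sigma_{j0}|<\hat\epsilon$ for $j=2,3$, and $|\gamma_4^{(k)}|<\hat\epsilon$; these are precisely $A_2(k)$, while $A_1(k)$ and $A_3(k)$ follow by extending $\chi_{h,\theta}$ by $\chi^{(k)}$ across $\Omega_{h,k}$. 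This closes the induction, and in particular, using the solvability Lemmas \ref{lemma-R1}--\ref{lemma-R3}, shows that the scheme is well defined on all of $\Omega_h$ for every $\theta$ and $h$.

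It remains to read off the two quantitative conclusions. Iterating \eqref{estimate-Fc} gives $F_c(\tilde{J}_k)\le F_c(\tilde{J}_0)\le C\delta_0$ for all $k$; since $L(J_k)\le F(J_k)\le F_c(J_k)\le C\delta_0$, Lemma \ref{lemma-R1} turns this into \eqref{tv-u_h}. For the Lipschitz bound on $\chi_{h,\theta}$, I would note that across each diamond the strong contact discontinuity propagates with the linearly degenerate speed $v/u$ evaluated at a state in $O_{\epsilon}({U}_i^{(0)})$, hence with slope bounded by the constant appearing in the CFL condition, while the random resampling at each level $x=kh$ displaces its trace by at most $2s=O(h)$; summing over the diamonds crossed between $x'$ and $x''$ gives $|\chi_{h,\theta}(x')-\chi_{h,\theta}(x'')|\le C(|x'-x''|+h)$.

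The hard part is not any single computation but the bootstrap structure: Proposition \ref{prop-f} and all the interaction estimates of Section \ref{sec:LE} are only valid once $F$ is already below the threshold $\tilde\epsilon$, which is the very thing the induction must propagate. The fixed order of quantifiers above removes the apparent circularity, but two checks must be made with care. First, the strong-front parameters $(\sigma_2^{(k)},\sigma_3^{(k)},\gamma_4^{(k)})$ have to remain in the \emph{tighter} $\hat\epsilon$-neighbourhoods at every step; this uses \eqref{diff-estimate-f} together with the elementary bound $L_c(J)\le F(J)$. Second, the reacting step is the only mechanism that can increase $F$, and one must see that its cumulative effect is genuinely lower order: $\sum_{k\ge 0}Me^{-lkh}h\|Z_0\|_{\infty}\big(F(J_k)+2\big)^2\le CM\|Z_0\|_{\infty}/l\le C\delta_0$, which is exactly why the strictly positive lower bound $l>0$ on the reaction rate — guaranteed by $\mathrm{(H3)}$ via Lemma \ref{lemma-Re} — and the smallness $\|Z_0\|_{\infty}<\delta_0$ are both indispensable, and why the tail $K_z\sum_{j>k}e^{-ljh}h\|Z_0\|_{\infty}$ was built into $F_c$ in the first place so that $F_c$ itself is monotone.
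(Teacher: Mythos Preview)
Your proposal is correct and follows essentially the same approach as the paper, which does not spell out a detailed proof of Theorem~\ref{theorem-Uh} but states it as the immediate consequence of Proposition~\ref{prop-f}, its Corollary, Proposition~\ref{prop:3.2}, and Lemma~\ref{lem:3.2}. Your write-up supplies exactly the induction on $k$ via the monotonicity of $F_c$ that the paper leaves implicit, together with welcome details the paper omits (the order in which constants must be fixed, the reason $\|Z_0\|_\infty<\delta_0$, and the argument for the Lipschitz bound on $\chi_{h,\theta}$).
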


Based on Theorem \ref{theorem-Uh}, now we can show the global existence of entropy  solutions of \eqref{eq-reaction-1} as follows.

\begin{proof}[Proof of Theorem \ref{theorem-exist}]
The convergence of the approximate solutions to a global entropy solution can be carried out in the standard way as the one in \cite{ChenWagner2003,Glimm1965,Zhang1999} by using the structure of the approximate solutions. Therefore, we can establish the global existence of entropy solutions of \eqref{eq-reaction-1}, \emph{i.e.}, Theorem \ref{theorem-exist}.
\end{proof}
%\begin{theorem}\label{theorem-exist}
%Under assumptions $\mathrm{(H1)}$-$\mathrm{(H3)}$, there exist positive constants $\delta_0$ and C, if \eqref{assum-g}-\eqref{assum-u-2} hold,
%then the initial-boundary value problem \eqref{eq-reaction-1} and \eqref{I}-\eqref{B} admits a global entropy solution $U(x,y)\in BV_{loc}(\Omega)\cap L^{\infty}(\Omega)$ such that the following hold:
%\begin{enumerate}[(i)]
%\item for every $x\in [0,+\infty),$
%\begin{equation}\label{estimate-tv-u}
%T.V.\{U(x,\cdot): (-\infty,g(x)]\}\leq C\delta_0.
%\end{equation}
%\item The Lipschitz curve $\{y=\chi(x)\}$ is a strong contact discontinuity emanating from the point $(0,\bar{y})$ with $\chi(x)<g(x)$ for any $x>0$, and
%\begin{equation}\label{sup-u}
%\sup\limits_{y<\chi(x)}|U(x,y)-\bar{U}_1|\leq C\delta_0,\quad \sup\limits_{\chi(x)<y<g(x)}|U(x,y)-\bar{U}_2|\leq C\delta_0.
%\end{equation}
%\end{enumerate}
%\end{theorem}

\section{Error estimate of the quasi-one-dimensional approximation}\label{sec:EE}
In this section, we shall study the quasi-one-dimensional approximation of two-dimensional steady supersonic exothermically reacting Euler flows between the Lipschitz wall $g(x)$ and strong contact discontinuity $\chi(x)$. To do that, we first solve the quasi-one-dimensional model, and then introduce several integral identities of the approximate solutions to show that the distance between the wall and the strong contact discontinuity has positive lower and upper bounds. Then we introduce the integral average of the approximate solutions with respect to $y$, and find the equations which the integral average satisfies as $h\rightarrow0$. Based on them, the difference between the integral average of the weak solution and the solution of the quasi-one-dimensional system can be estimated by analyzing the error terms.

\subsection{Quasi-one-dimensional model}\label{sec:A2}
In this subsection, we shall establish the global existence of solution to quasi-one-dimensional model \eqref{eq-QO1}.

First system \eqref{eq-QO1} with initial data $U_{A,0}=(\rho_{A,0},u_{A,0},p_{A,0},Z_{A,0})^\top$ can be written equivalently as
\begin{equation}\label{eq-QO2}
\begin{cases}
\rho u A(x) = \rho_{A,0} u_{A,0} A(0),\\
u+\frac{A(x)}{\rho_{A,0} u_{A,0} A(0)} p = u_{A,0}+\frac{A(0)}{\rho_{A,0} u_{A,0} A(0)}p_{A,0}+\frac{1}{\rho_{A,0} u_{A,0} A(0)}\int_0^x A'(\tau)pd\tau,\\
\frac{\gamma p}{(\gamma-1)\rho}+\frac{1}{2}u^2 =\frac{\gamma p_{A,0}}{(\gamma-1)\rho_{A,0}}+\frac{1}{2}u_{A,0}^2+\frac{q_0}{\rho_{A,0} u_{A,0} A(0)}\int_0^x A(\tau)\rho\phi(T)Zd\tau,\\
Z =Z_{A,0} -\frac{1}{\rho_{A,0} u_{A,0} A(0)}\int_0^x A(\tau)\rho\phi(T)Zd\tau.
\end{cases}
\end{equation}

Then we have the following lemma.

\begin{lemma}\label{lemma-Quasi-One-Ex}
There exist positive constants $\delta_0,C,C_*,$ and $C^*$, such that if $|U_{A,0}-\hat{U}_2^{(0)}|\leq \delta_0$, with $\hat{U}_2^{(0)}=({\rho}_2^{(0)},{u}_2^{(0)},{p}_2^{(0)},0)^\top$ and $\int_{0}^{\infty}|A'(\tau)|d\tau\leq \delta_0$, then the system \eqref{eq-QO2} admits a unique global solution $U_A(x)$ satisfying that
\begin{equation}\label{estimate-QO}
\max\limits_{x\geq 0}|U_{A}(x)-\hat{U}_2^{(0)}|\leq C\delta_0,\quad Z_{A,0}e^{-C^*x} \leq Z_A \leq Z_{A,0}e^{-C_*x}.
\end{equation}
\end{lemma}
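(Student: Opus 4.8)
The plan is to solve \eqref{eq-QO2} globally on $[0,\infty)$ by a contraction mapping argument, the smallness required for the contraction being furnished uniformly in $x$ by $\delta_0$ together with the exponential decay of the reactant. Write $c_0:=\rho_{A,0}u_{A,0}A(0)$, $b_1:=u_{A,0}+\frac{A(0)}{c_0}p_{A,0}$, $b_2:=\frac{\gamma p_{A,0}}{(\gamma-1)\rho_{A,0}}+\frac12 u_{A,0}^2$, and note that $A(0)>0$ is fixed, so $c_0$ is bounded above and below by positive constants for $\delta_0$ small, and $A(x)\ge A(0)-\|A'\|_{L^1}\ge A(0)-\delta_0>0$. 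It is convenient to keep only $V=(\rho,u,p)^\top$ as the fixed-point variable and to regard $Z$ as an \emph{explicit} functional of $V$: for $V\in C([0,\infty);\mathbb{R}^3)$ near $\hat{V}_2^{(0)}:=(\rho_2^{(0)},u_2^{(0)},p_2^{(0)})^\top$ put $Z[V](x):=Z_{A,0}\exp\!\big(-\tfrac1{c_0}\int_0^x A\rho\,\phi(T)\,d\tau\big)$ with $T=p/(R\rho)$; this has no self-reference and satisfies $Z[V]'=-\tfrac1{c_0}A\rho\phi(T)Z[V]$, $Z[V](0)=Z_{A,0}$. On $X:=\{V\in C([0,\infty);\mathbb{R}^3):\ \|V-\hat{V}_2^{(0)}\|_{L^\infty}\le C_0\delta_0,\ T\ge\tfrac12 T^{(0)}\}$ I would define $\mathcal{T}(V)(x):=(\tilde\rho,\tilde u,\tilde p)(x)$ as the solution of the pointwise algebraic system
\[
\tilde\rho\tilde u=\frac{c_0}{A(x)},\qquad \tilde u+\frac{A(x)}{c_0}\tilde p=b_1+\frac1{c_0}I_1[V](x),\qquad \frac{\gamma\tilde p}{(\gamma-1)\tilde\rho}+\frac12\tilde u^2=b_2+\frac{q_0}{c_0}I_2[V](x),
\]
where $I_1[V](x):=\int_0^x A'(\tau)p(\tau)\,d\tau$ and $I_2[V](x):=\int_0^x A(\tau)\rho(\tau)\phi(T(\tau))Z[V](\tau)\,d\tau$. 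A fixed point $V_A=\mathcal{T}(V_A)$ together with $Z_A:=Z[V_A]$ is exactly a solution of \eqref{eq-QO2}. That $\mathcal{T}$ is well defined and Lipschitz with a \emph{uniform} constant follows from the implicit function theorem: the Jacobian of $(\rho,u,p)\mapsto\big(\rho u,\ u+\tfrac{A(0)}{c_0}p,\ \tfrac{\gamma p}{(\gamma-1)\rho}+\tfrac12 u^2\big)$ at $\hat{V}_2^{(0)}$ has determinant a nonzero multiple of $(u_2^{(0)})^2-(c_2^{(0)})^2>0$ by the supersonic assumption, uniformly for $A(x)$ near $A(0)$.

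Next I would establish $\mathcal{T}(X)\subseteq X$, whose crux is the two-sided exponential bound for $Z$. On $X$ one has $A(\tau)\in[A(0)-\delta_0,A(0)+\delta_0]$, $\rho$ within $C_0\delta_0$ of $\rho_2^{(0)}$, and $T\ge\tfrac12 T^{(0)}$, so by the positivity and monotonicity of $\phi$ there are fixed constants $0<C_*\le C^*$ (independent of $\delta_0$ for $\delta_0$ small) with $C_*\le\tfrac1{c_0}A(\tau)\rho(\tau)\phi(T(\tau))\le C^*$; since $Z_{A,0}\ge0$ this gives $Z_{A,0}e^{-C^*x}\le Z[V](x)\le Z_{A,0}e^{-C_*x}$, hence $0\le Z[V]\le Z_{A,0}\le\delta_0$ and $\int_0^\infty Z[V]\,d\tau\le Z_{A,0}/C_*\le M\delta_0$. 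Therefore $|I_2[V](x)|\le M\delta_0$, while $|I_1[V](x)|\le\|p\|_{L^\infty}\|A'\|_{L^1}\le M\delta_0$ and $|U_{A,0}-\hat{U}_2^{(0)}|\le\delta_0$; plugging these $O(\delta_0)$ perturbations into the Lipschitz solve of the algebraic system gives $\|\mathcal{T}(V)-\hat{V}_2^{(0)}\|_{L^\infty}\le M\delta_0$, so with $C_0:=2M$ and $\delta_0$ small we obtain $\mathcal{T}(V)\in X$ (the temperature bound is preserved since $\tilde T$ stays close to $T_2^{(0)}$, which exceeds $T^{(0)}$ by $\mathrm{(H3)}$).

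Then I would prove the contraction. For $V^{(1)},V^{(2)}\in X$, using $|e^{-a}-e^{-b}|\le e^{-\min(a,b)}|a-b|$ with arguments $\ge C_*x$ one gets $|Z[V^{(1)}](x)-Z[V^{(2)}](x)|\le M Z_{A,0}\,x\,e^{-C_*x}\|V^{(1)}-V^{(2)}\|_{L^\infty}$, whence both $\|Z[V^{(1)}]-Z[V^{(2)}]\|_{L^\infty}$ and $\int_0^\infty|Z[V^{(1)}]-Z[V^{(2)}]|\,d\tau$ are $\le M\delta_0\|V^{(1)}-V^{(2)}\|_{L^\infty}$; combining with the uniform bounds on $Z[V^{(i)}]$ yields $\|I_2[V^{(1)}]-I_2[V^{(2)}]\|_{L^\infty}\le M\delta_0\|V^{(1)}-V^{(2)}\|_{L^\infty}$, and clearly $\|I_1[V^{(1)}]-I_1[V^{(2)}]\|_{L^\infty}\le\delta_0\|V^{(1)}-V^{(2)}\|_{L^\infty}$. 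The uniform Lipschitz continuity of the algebraic solve then gives $\|\mathcal{T}(V^{(1)})-\mathcal{T}(V^{(2)})\|_{L^\infty}\le M\delta_0\|V^{(1)}-V^{(2)}\|_{L^\infty}\le\tfrac12\|V^{(1)}-V^{(2)}\|_{L^\infty}$ for $\delta_0$ small. By the Banach fixed-point theorem $\mathcal{T}$ has a unique fixed point $V_A\in X$, and $U_A:=(V_A,Z[V_A])$ is the unique global solution of \eqref{eq-QO2}; $\max_{x\ge0}|U_A(x)-\hat{U}_2^{(0)}|\le C\delta_0$ is built into $X$, and the two-sided bound $Z_{A,0}e^{-C^*x}\le Z_A\le Z_{A,0}e^{-C_*x}$ is the one derived above, with $C_*,C^*$ as there.

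The step I expect to be the main obstacle is making this work \emph{globally} in $x$: a naive Picard iteration retaining $Z$ as an unknown produces error terms that grow linearly in $x$ through $I_2$, so it is essential both to represent $Z$ by the explicit functional $Z[V]$ and to extract, uniformly on $X$, the decay $Z_{A,0}e^{-C^*x}\le Z[V]\le Z_{A,0}e^{-C_*x}$ — a short bootstrap, since $C_*,C^*$ depend on a priori bounds for $A,\rho,T$ available only on $X$. It is precisely the interplay of $\|A'\|_{L^1}\le\delta_0$, of $Z_{A,0}\le\delta_0$, and of $\int_0^\infty Z[V]\le Z_{A,0}/C_*\le M\delta_0$ that renders every perturbation in \eqref{eq-QO2} of size $O(\delta_0)$, so the same map is at once a self-map of $X$ and a contraction on the whole half-line. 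The only other delicate point is the non-degeneracy of the pointwise algebraic system, which is nothing but the non-sonic condition $u_2^{(0)}>c_2^{(0)}$ and must be checked to hold uniformly in $x$.
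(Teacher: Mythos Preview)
Your proposal is correct and follows essentially the same strategy as the paper's proof: a Picard/contraction argument in which the exponential decay of $Z$ (coming from $\phi(T)\ge\phi(T^{(0)}/2)>0$) and the smallness of $\|A'\|_{L^1}$ make all perturbation integrals in \eqref{eq-QO2} of size $O(\delta_0)$ uniformly in $x$, and the supersonic condition $u_2^{(0)}>c_2^{(0)}$ renders the pointwise algebraic solve for $(\rho,u,p)$ nondegenerate. The only cosmetic difference is packaging: the paper runs the iteration in two steps (update $V^{(n)}$ from $V^{(n-1)},Z^{(n-1)}$, then update $Z^{(n)}$ from $V^{(n)}$) and proves boundedness by induction followed by a Cauchy-sequence estimate, whereas you eliminate $Z$ at the outset via the explicit formula $Z[V]=Z_{A,0}\exp(-\tfrac1{c_0}\int_0^xA\rho\phi(T))$ and apply the Banach fixed-point theorem directly to $V$ alone---a slightly cleaner formulation, but the underlying estimates are identical.
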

\begin{proof}
We use the following the iteration scheme to establish a sequence of functions convergent to a solution.
Let
$$
(\rho_A^{(0)},u_A^{(0)},p_A^{(0)})=(\rho_{A,0},u_{A,0},p_{A,0}),
$$
and $Z_A^{(0)}$ is given by the last equation of \eqref{eq-QO3} for $n=0$. Precisely, we have
\[
Z_A^{(0)} =Z_{A,0}\exp(-\frac{1}{\rho_{A,0} u_{A,0} A(0)}\int_0^x A(\tau)\rho_{A,0}\phi(T_{A,0})d\tau).
\]

Then for any $n\geq 1$, the functions $U_{A}^{(n)}(x)=(\rho_{A}^{(n)},u_A^{(n)},p_A^{(n)},Z_A^{(n)})$ are determined inductively by
\begin{equation}\label{eq-QO3}
\left\{
\begin{array}{l}
\rho_A^{(n)} u_A^{(n)} A(x) = \rho_{A,0} u_{A,0} A(0),\\
u_A^{(n)}+\frac{A(x)}{\rho_{A,0} u_{A,0} A(0)}p_A^{(n)} =u_{A,0}+\frac{A(0)}{\rho_{A,0} u_{A,0} A(0)}p_{A,0}+\frac{1}{\rho_{A,0} u_{A,0} A(0)}\int_0^x A'(\tau)p_A^{(n-1)}d\tau,\\
\frac{\gamma p_A^{(n)}}{(\gamma-1)\rho_A^{(n)}}+\frac{1}{2}(u_A^{(n)})^2 =\frac{\gamma p_{A,0}}{(\gamma-1)\rho_{A,0}}+\frac{1}{2}u_{A,0}^2
+\frac{q_0}{\rho_{A,0} u_{A,0} A(0)}\int_0^x A(\tau)\rho_A^{(n-1)}\phi(T_A^{(n-1)})Z_A^{(n-1)}d\tau,\\
Z_A^{(n)} =Z_{A,0} -\frac{1}{\rho_{A,0} u_{A,0} A(0)}\int_0^x A(\tau)\rho_A^{(n)}\phi(T_A^{(n)})Z_A^{(n)}d\tau.
\end{array}
\right.
\end{equation}

First, let us prove inductively that for any $n\geq 0,\,U_{A}^{(n)}(x)$ are well defined and that there exist positive constants $\delta_0$ and $C$, such that the following inequality holds
\begin{equation}\label{estimate-QO-n}
\max\limits_{x\geq 0}|U_{A}^{(n)}(x)-\hat{U}_2^{(0)}|\leq C\delta_0.
\end{equation}

Obviously, it is true for $n=0$. Now assume that the estimate \eqref{estimate-QO-n} holds for $n=k-1,k\geq 1$, then we have
\[
C_*\leq \frac{1}{\rho_{A,0} u_{A,0} A(0)}A(x)\rho_A^{(k-1)}\phi(T_A^{(k-1)}) \leq C^*,
\]
for some constants $C_*$ and $C^*$. The last equation of \eqref{eq-QO3} yields that
\[
Z_A^{(k-1)} =Z_{A,0}\exp(-\frac{1}{\rho_{A,0} u_{A,0} A(0)}\int_0^x A(\tau)\rho_A^{(k-1)}\phi(T_A^{(k-1)})d\tau).
\]
It implies that
\begin{equation}\label{estimate-Z-n-1}
Z_{A,0}e^{-C^*x} \leq Z_A^{(k-1)} \leq Z_{A,0}e^{-C_*x}.
\end{equation}

Let $\mathcal{H}(V_A^{(k)},A(x))=(\rho_A^{(k)} u_A^{(k)} A(x),\,u_A^{(k)}+\frac{A(x)}{\rho_{A,0} u_{A,0} A(0)} p_A^{(k)},\,\frac{\gamma p_A^{(k)}}{(\gamma-1)\rho_A^{(k)}}+\frac{1}{2}(u_A^{(k)})^2)^\top$ and $V_A^{(k)}=(\rho_{A}^{(k)},\,u_{A}^{(k)},\,p_{A}^{(k)})^\top$, then the first three equations of \eqref{eq-QO3} can be written as
\begin{equation}\label{eq-QO3-3}
\mathcal{H}(V_A^{(k)},A(x))=\mathcal{H}(V_{A,0},A(x))+\mathcal{H}_e(V_A^{(k-1)},V_{A,0},A(x),A(0),Z_A^{(k-1)}).
\end{equation}
where $V_{A,0}=(\rho_{A,0},\,u_{A,0},\,p_{A,0})^\top$, and the term $\mathcal{H}_e$ can be defined without confusion.

From the fact that $\int_{0}^{\infty}|A'(\tau)|d\tau\leq \delta_0$, and the estimate \eqref{estimate-Z-n-1}, we have
\begin{align*}
|A(0)-A(x)|&\leq \delta_0,\\
\Big|\int_0^x A'(\tau)p_A^{(k-1)}d\tau\Big|
&\leq \int_0^x |A'(\tau)||p_A^{(k-1)}-{p}_{2}^{(0)}|d\tau+\int_0^x |A'(\tau)|{p}^{(0)}_{2}d\tau\\
&\leq C\delta_0^2+ {p}^{(0)}_{2}\delta_0,
\end{align*}
and
\begin{align*}
\Big|\int_0^x A(\tau)\rho_A^{(k-1)}\phi(T_A^{(k-1)})Z_A^{(k-1)}d\tau\Big|
&\leq \int_0^x |A(\tau)|\rho_A^{(k-1)}\phi(T_A^{(k-1)})Z_A^{(k-1)}d\tau\\
&\leq C^*\rho_{A,0} u_{A,0} A(0) Z_{A,0}\int_0^x e^{-C_*\tau}d\tau\\
&\leq C^*\rho_{A,0} u_{A,0} A(0)\delta_0/C_*.
\end{align*}
Therefore, $\mathcal{H}_e$ is bounded by $O(1)\delta_0$. Then it follows from the implicit function theorem that $\max\limits_{x\geq 0}|V_{A}^{(k)}(x)-V_{A,0}|\leq C'\delta_0,$ by choosing suitably large $C'$ and suitably small $\delta_0$.

Again, from the last equation of \eqref{eq-QO3}, we have
\begin{equation}\label{estimate-Z-n}
Z_A^{(k)} =Z_{A,0}\mathrm{exp}(-\frac{1}{\rho_{A,0} u_{A,0} A(0)}\int_0^x A(\tau)\rho_A^{(k)}\phi(T_A^{(k)})d\tau),
\end{equation}
which implies $Z_{A,0}e^{-C^*x}\leq Z_A^{(k)}\leq Z_{A,0}e^{-C_*x}.$ So we obtain the estimate \eqref{estimate-QO-n} for $n=k$.

Second, we will show the convergence of the sequence $\{U_{A}^{(n)}(x)\}_{n=0}^{\infty}$.

Define
\[
w^{(n)}=-\frac{1}{\rho_{A,0} u_{A,0} A(0)}\int_0^x A(\tau)\rho_A^{(n)}\phi(T_A^{(n)})d\tau.
\]
Then by \eqref{estimate-Z-n}, we can obtain
\begin{align}\label{conver-Z}
|Z_A^{(n)}-Z_A^{(n-1)}|&=|Z_{A,0}(w^{(n)}-w^{(n-1)})\int_{0}^{1}\mathrm{exp}(sw^{(n)}+(1-s)w^{(n-1)})ds|\nonumber\\
&\leq\frac{Z_{A,0}e^{-C_*x}}{\rho_{A,0} u_{A,0} A(0)}\int_0^x A(\tau)|\rho_A^{(n)}\phi(T_A^{(n)})-\rho_A^{(n-1)}\phi(T_A^{(n-1)})|d\tau\nonumber\\
&\leq O(1)\delta_0\max\limits_{0\leq\tau\leq x}(|\rho_A^{(n)}-\rho_A^{(n-1)}|+|T_A^{(n)}-T_A^{(n-1)}|).
\end{align}

Next, by \eqref{eq-QO3-3}, it holds that
\begin{align}\label{eq-QO3-3n}
\mathcal{H}(V_A^{(n)},A(x))-\mathcal{H}(V_A^{(n-1)},A(x))=&
\mathcal{H}_e(V_A^{(n-1)},V_{A,0},A(x),A(0),Z_A^{(n-1)})\nonumber\\
&-\mathcal{H}_e(V_A^{(n-2)},V_{A,0},A(x),A(0),Z_A^{(n-2)}).
\end{align}
Noticing the fact that $\int_{0}^{\infty}|A'(\tau)|d\tau\leq \delta_0$, and the estimate \eqref{conver-Z}, we have
\begin{align*}
\Big|\frac{1}{\rho_{A,0} u_{A,0} A(0)}\int_0^x A'(\tau)(p_A^{(n-1)}-p_A^{(n-2)})d\tau\Big|\leq O(1)\delta_0\max\limits_{0\leq\tau\leq x}|p_A^{(n-1)}-p_A^{(n-2)}|,
\end{align*}
and
\begin{align*}
&\Big|\frac{q_0}{\rho_{A,0} u_{A,0} A(0)}\int_0^x A(\tau)(\rho_A^{(n-1)}\phi(T_A^{(n-1)})Z_A^{(n-1)}-\rho_A^{(n-2)} \phi(T_A^{(n-2)})Z_A^{(n-2)})d\tau\Big|\\
&\leq \frac{q_0}{\rho_{A,0} u_{A,0} A(0)}\int_0^x A(\tau)|\rho_A^{(n-1)}\phi(T_A^{(n-1)})-\rho_A^{(n-2)}\phi(T_A^{(n-2)})|Z_A^{(n-1)}d\tau\\
&\hspace{1em}+\frac{q_0}{\rho_{A,0} u_{A,0} A(0)}\int_0^x A(\tau)\rho_A^{(n-2)}\phi(T_A^{(n-2)})|Z_A^{(n-1)}-Z_A^{(n-2)}|d\tau\\
&\leq O(1)Z_{A,0}\int_0^xe^{-C_*\tau}d\tau\max\limits_{0\leq\tau\leq x}(|\rho_A^{(n-1)}-\rho_A^{(n-2)}|+|T_A^{(n-1)}-T_A^{(n-2)}|)\\
&\hspace{1em}+O(1)Z_{A,0}\int_0^x e^{-C_*\tau}\tau d\tau \max\limits_{0\leq\tau\leq x}(|\rho_A^{(n-1)}-\rho_A^{(n-2)}|+|T_A^{(n-1)}-T_A^{(n-2)}|)\\
&\leq O(1)\delta_0\max\limits_{0\leq\tau\leq x}(|\rho_A^{(n-1)}-\rho_A^{(n-2)}|+|T_A^{(n-1)}-T_A^{(n-2)}|).
\end{align*}

Therefore, the right-hand side of \eqref{eq-QO3-3n} is bounded by $O(1)\delta_0\max\limits_{0\leq\tau\leq x}|V_A^{(n-1)}-V_A^{(n-2)}|$. Then it follows from the implicit function theorem that
\begin{equation}\label{conver-V}
\max\limits_{x\geq 0}|V_{A}^{(n)}(x)-V_{A}^{(n-1)}(x)|\leq \frac{1}{2}\max\limits_{x\geq 0}|V_{A}^{(n-1)}(x)-V_{A}^{(n-2)}(x)|.
\end{equation}
by choosing suitably small $\delta_0$. Combining \eqref{conver-Z} and \eqref{conver-V}, we know that the limit $U_A(x)$ is an unique solution of \eqref{eq-QO2}, which belongs to $C([0,\infty),\mathbb{R}^4)$ and satisfies
\[
\max\limits_{x\geq 0}|U_{A}(x)-\hat{U}_2^{(0)}|\leq C\delta_0,\quad Z_{A,0}e^{-C^*x} \leq Z_A \leq Z_{A,0}e^{-C_*x}.
\]
\end{proof}

\subsection{Integral identities of the approximate solutions}
Let $U_{h,\theta}$ be the solution obtained by Theorem \ref{theorem-Uh}.
%First, by the construction scheme, we obtain integral identities of the approximate solution $U_{h,\theta}$.
Let $\Omega_{i,h}$ be the domain with the boundaries that $x=(i-1)h$, $x=ih$, $y=g_{i-1,h}(x)$, and $y=\chi^{(i-1)}(x)$.
%Inside the domain surrounded by the two lines $x=(i-1)h,x=ih$, the approximate boundary $y=g_{i-1,h}(x)$, and the approximate strong contact discontinuity $y=\chi^{(i-1)}(x)$,
Let $b_{i-1}$ be the slope of $y=g_{i-1,h}(x)$. And let $s^{(i-1)}$ be the slope of $y=\chi^{(i-1)}(x)$ emanating from point $((i-1)h, y_{i-1,s})$, where $y_{i-1,s}=y_{i-1}+2n_{i-1}s$ with a negative integer $n_{i-1}$.
By applying the divergence theorem in domain $\Omega_{i,h}$ and using the Rankine-Hugoniot conditions, we have the following integral identities.
\begin{align}
&\int\limits_{y_{i-1,s}+s^{(i-1)}h}^{y_i}(\rho_{h,\theta} u_{h,\theta}) (ih-,y)dy-\int\limits_{y_{i-1,s}}^{y_{i-1}}(\rho_{h,\theta} u_{h,\theta}) ((i-1)h+,y)dy=0,\label{4.1s}\\
&\int\limits_{y_{i-1,s}+s^{(i-1)}h}^{y_i}(\rho_{h,\theta} u_{h,\theta}^2+p_{h,\theta})(ih-,y)dy-\int\limits_{y_{i-1,s}}^{y_{i-1}}(\rho_{h,\theta} u_{h,\theta}^2+p_{h,\theta}) ((i-1)h+,y)dy\nonumber\\
&\hspace{3em}+\int\limits_{(i-1)h}^{ih}\big(-b_{i-1}p_{h,\theta}(\tau,y)|_{y=g_{i-1,h}(\tau)}
+s^{(i-1)}p_{h,\theta}(\tau,y)|_{y=\chi^{(i-1)}(\tau)}\big)d\tau=0,\\
&\int\limits_{y_{i-1,s}+s^{(i-1)}h}^{y_i}\big(\rho_{h,\theta} u_{h,\theta}(\frac{\gamma p_{h,\theta}}{(\gamma-1)\rho_{h,\theta}}+\frac{1}{2}u_{h,\theta}^2+\frac{1}{2}v_{h,\theta}^2)\big)(ih-,y)dy
\nonumber \\
&\hspace{3em}-\int\limits_{y_{i-1,s}}^{y_{i-1}}\big(\rho_{h,\theta} u_{h,\theta}(\frac{\gamma p_{h,\theta}}{(\gamma-1)\rho_{h,\theta}}+\frac{1}{2}u_{h,\theta}^2+\frac{1}{2}v_{h,\theta}^2)\big)((i-1)h+,y)dy=0,\\
&\int\limits_{y_{i-1,s}+s^{(i-1)}h}^{y_i}(\rho_{h,\theta} u_{h,\theta} Z_{h,\theta}) (ih-,y)dy - \int\limits_{y_{i-1,s}}^{y_{i-1}}(\rho_{h,\theta} u_{h,\theta} Z_{h,\theta})((i-1)h+,y)dy=0.\label{4.4s}
\end{align}
%where $b_{i-1}$ is the slope of $y=g_{i-1,h}(x)$, $s^{(i-1)}$ is the slope of $y=\chi^{(i-1)}(x)$ emanating from point $((i-1)h, y_{i-1,s})$, and $y_{i-1,s}=y_{i-1}+2n_{i-1}s$ with a negative integer $n_{i-1}$ .

Therefore, for any $x\in ((k-1)h,kh)$, summing over \eqref{4.1s}--\eqref{4.4s} with respect to $1\leq i\leq k-1$ respectively, we have that
\begin{align}
&\int_{\chi^{(k-1)}(x)}^{g_{k-1,h}(x)}(\rho_{h,\theta} u_{h,\theta})(x-,y)dy+\sum_{i=1}^{k-1}E_{1,i}(h,\theta)=\int_{{y}^{(0)}}^{0}(\rho_{h,\theta} u_{h,\theta})(0+,y)dy,\label{E1i}\\
&\int_{\chi^{(k-1)}(x)}^{g_{k-1,h}(x)}(\rho_{h,\theta} u_{h,\theta}^2+p_{h,\theta})(x-,y)dy+\sum_{i=1}^{k-1}E_{2,i}(h,\theta)
=\int_{{y}^{(0)}}^{0}(\rho_{h,\theta} u_{h,\theta}^2+p_{h,\theta})(0+,y)dy \nonumber \\
&\hspace{2em}+\sum_{i=1}^{k-1}\int_{(i-1)h}^{ih}\big(b_{i-1}p_{h,\theta}(\tau,y)|_{y=g_{i-1}(\tau)}-s^{(i-1)}
p_{h,\theta}(\tau,y)|_{y=\chi^{(i-1)}(\tau)}\big)d\tau \nonumber \\
&\hspace{2em}+\int_{(k-1)h}^{x}\big(b_{k-1}p_{h,\theta}(\tau,y)|_{y=g_{k-1}(\tau)}-s^{(k-1)}
p_{h,\theta}(\tau,y)|_{y=\chi^{(k-1)}(\tau)}\big)d\tau,\label{E2i}\\
&\int_{\chi^{(k-1)}(x)}^{g_{k-1,h}(x)}\big(\rho_{h,\theta} u_{h,\theta}(\frac{\gamma p_{h,\theta}}{(\gamma-1)\rho_{h,\theta}}+\frac{1}{2}u_{h,\theta}^2+\frac{1}{2}v_{h,\theta}^2)\big)(x-,y)dy
+\sum_{i=1}^{k-1}E_{3,i}(h,\theta)\nonumber \\
&\hspace{2em}=\int_{{y}^{(0)}}^{0}\big(\rho_{h,\theta} u_{h,\theta}(\frac{\gamma p_{h,\theta}}{(\gamma-1)\rho_{h,\theta}}+\frac{1}{2}u_{h,\theta}^2+\frac{1}{2}v_{h,\theta}^2)\big)(0+,y)dy,\label{E3i}\\
&\int_{\chi^{(k-1)}(x)}^{g_{k-1,h}(x)}(\rho_{h,\theta} u_{h,\theta} Z_{h,\theta})(x-,y)dy+\sum_{i=1}^{k-1}E_{4,i}(h,\theta)=\int_{{y}^{(0)}}^{0}(\rho_{h,\theta} u_{h,\theta} Z_{h,\theta})(0+,y)dy,\label{E4i}
\end{align}
where $E_{l,i}(h,\theta)(l=1,2,3,4)$ is the $l$th-component of the error term $E_{i}(h,\theta)$, and
\begin{align}
E_{i}(h,\theta)=\int\limits_{y_{i-1,s}+s^{(i-1)}h}^{y_{i}}(W(U_{h,\theta}))(ih-,y)dy
-\int\limits_{y_{i,s}}^{y_{i}}(W(U_{h,\theta}))(ih+,y)dy.
\end{align}

\smallskip
%\subsection{The properties of $E_{i}(h,\theta)$}
Now we are going to analyze the error terms $E_{i}(h,\theta)$ across the line $x=kh$.
Note the fact that
\begin{equation}\label{eq-ih-G}
W(U_{h,\theta}(ih+,y))=W(U_{h,\theta}(ih-,y_{i,n}))+G(U_{h,\theta}(ih-,y_{i,n}))h.
\end{equation}
in the interval $y_i+2ns< y <y_i+2(n+1)s, n\leq -1$, and the fact that
\[
\sum_{i=1}^{k-1}\int\limits_{y_{i,s}}^{y_{i}}G(U_{h,\theta}(ih-,y_{i,n}))hdy\to \int\limits_{0}^{x}\int\limits_{\chi(\tau)}^{g(\tau)}G(U(\tau,y))dyd\tau,
\]
when $h\to 0$ by the convergence of the approximate solutions.
Let
\[
\tilde{E}_{i}(h,\theta)=\int\limits_{y_{i-1,s}+s^{(i-1)}h}^{y_{i}}W(U_{h,\theta}(ih-,y))dy
-\int\limits_{y_{i,s}}^{y_{i}}W(U_{h,\theta_i}(ih-,y))dy,
\]
where $W(U_{h,\theta_i}(ih-,y))=W(U_{h,\theta}(ih-,y_{i,n}))$ in the interval $y_i+2ns< y <y_i+2(n+1)s$, for $n\leq -1$. Obviously
\begin{equation}
E_{i}(h,\theta)=\tilde{E}_{i}(h,\theta)-\int\limits_{y_{i,s}}^{y_{i}}G(U_{h,\theta}(ih-,y_{i,n}))hdy.
\end{equation}
Therefore, in order to estimate $E_{i}(h,\theta)$, we only need to estimate $\tilde{E}_{i}(h,\theta)$.

To get the more specific expression of $\tilde{E}_{i}(h,\theta)$, let
\[
d_{i}=\frac{s^{(i-1)}h-(y_i-y_{i-1})}{s}.
\]
Obviously, $d_i\in (-1,1)$, and is independent of $\theta_i$.

Now we will divide our analysis into two cases based on $d_i$. %and $\theta_i$.

The first case is that $d_i<0$. In this case, if $\theta_i\in(-1,d_i+1)$, then we have $y_{i,s}=y_i+2n_{i-1}s$, and
\begin{align}\label{E-i-1}
\tilde{E}_{i}(h,\theta)=&\int\limits_{y_{i}+2n_{i-1}s}^{y_{i}}\big(W(U_{h,\theta}(ih-,y))
-W(U_{h,\theta_i}(ih-,y))\big)dy%\nonumber \\
+\int\limits_{y_{i}+(2n_{i-1}+d_i)s}^{y_{i}+2n_{i-1}s}W(U_{h,\theta}(ih-,y))dy.
\end{align}
If $\theta_i\in(d_i+1,1)$, then we have $y_{i,s}=y_i+2(n_{i-1}-1)s$, and
\begin{align}\label{E-i-2}
\tilde{E}_{i}(h,\theta)=&\int\limits_{y_{i}+2n_{i-1}s}^{y_{i}}\big(W(U_{h,\theta}(ih-,y))
-W(U_{h,\theta_i}(ih-,y))\big)dy\nonumber \\
&+\int\limits_{y_{i}+(2n_{i-1}+d_i)s}^{y_{i}+2n_{i-1}s}W(U_{h,\theta}(ih-,y))dy
-\int\limits_{y_{i}+2(n_{i-1}-1)s}^{y_{i}+2n_{i-1}s}W(U_{h,\theta_i}(ih-,y))dy.
\end{align}

The second case is that $d_i>0$. In this case, if $\theta_i\in(-1,d_i-1)$, then we have $y_{i,s}=y_i+2(n_{i-1}+1)s$, and
\begin{align}\label{E-i-3}
\tilde{E}_{i}(h,\theta)=&\int\limits_{y_{i}+2n_{i-1}s}^{y_{i}}\big(W(U_{h,\theta}(ih-,y))
-W(U_{h,\theta_i}(ih-,y))\big)dy\nonumber\\
&-\int\limits_{y_{i}+2n_{i-1}s}^{y_{i}+(2n_{i-1}+d_i)s}W(U_{h,\theta}(ih-,y))dy
+\int\limits_{y_{i}+2n_{i-1}s}^{y_{i}+2(n_{i-1}+1)s}W(U_{h,\theta_i}(ih+,y))dy.
\end{align}
If $\theta_i\in(d_i-1,1)$, then we have $y_{i,s}=y_i+2n_{i-1}s$, and
\begin{align}\label{E-i-4}
\tilde{E}_{i}(h,\theta)=&\int\limits_{y_{i}+2n_{i-1}s}^{y_{i}}\big(W(U_{h,\theta}(ih-,y))
-W(U_{h,\theta_i}(ih-,y))\big)dy%\nonumber \\
-\int\limits_{y_{i}+2n_{i-1}s}^{y_{i}+(2n_{i-1}+d_i)s}W(U_{h,\theta}(ih-,y))dy.
\end{align}

Let $\mathbf{1}_B$ be the characteristic function of set B.
Then, combining \eqref{E-i-1}-\eqref{E-i-4} together, we have
\begin{align}\label{eq-Ei}
&\tilde{E}_{i}(h,\theta)\nonumber \\
=&\mathbf{1}_{(-1,0)}(d_i)\Big(\int\limits_{y_{i}+2n_{i-1}s}^{y_{i}}\big(W(U_{h,\theta}(ih-,y))
-W(U_{h,\theta_i}(ih-,y))\big)dy\nonumber \\
&\qquad+\int\limits_{y_{i}+(2n_{i-1}+d_i)s}^{y_{i}+2n_{i-1}s}W(U_{h,\theta}(ih-,y))dy
-\mathbf{1}_{(d_i+1,1)}(\theta_i)\int\limits_{y_{i}+2(n_{i-1}-1)s}^{y_{i}+2n_{i-1}s}W(U_{h,\theta_i}(ih-,y))dy\Big)\nonumber\\
&+\mathbf{1}_{(0,1)}(d_i)\Big(\int\limits_{y_{i}+2n_{i-1}s}^{y_{i}}\big(W(U_{h,\theta}(ih-,y))
-W(U_{h,\theta_i}(ih-,y))\big)dy\\
&\qquad-\int\limits_{y_{i}+2n_{i-1}s}^{y_{i}+(2n_{i-1}+d_i)s}W(U_{h,\theta}(ih-,y))dy
+\mathbf{1}_{(-1,d_i-1)}(\theta_i)\int\limits_{y_{i}+2n_{i-1}s}^{y_{i}+2(n_{i-1}+1)s}W(U_{h,\theta_i}(ih-,y))dy\Big)\nonumber.
\end{align}

For the error term $\tilde{E}_{i}(h,\theta)$, we have the following lemma.
\begin{lemma}\label{lemma-tildeE}
For any $x\geq 0$, there exist a null set $\mathscr{N}_1\subset \prod_{k=1}^{\infty}(-1,1)$ and a subsequence $\{h_j\}_{j=1}^{\infty}$, %\subset \{\Delta x\}$,
which tends to $0$, such that when $h_j\to 0$, it holds that
\begin{equation}\label{estimate-E-i}
\sum_{i=1}^{k-1}\tilde{E}_{i}(h_j,\theta)\to 0,
\end{equation}
for any ${\theta}\in \prod_{k=1}^{\infty}(-1,1)\backslash \mathscr{N}_1$.
\end{lemma}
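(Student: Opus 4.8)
The plan is to show that the error term $\tilde E_i(h,\theta)$ is a sum of contributions that are, on average over $\theta$, quadratically small in $h$, and then extract a common subsequence via a Borel--Cantelli / diagonal argument, exactly in the spirit of the classical Glimm residual estimate (cf. \cite{Glimm1965,Smoller1994}). First I would fix $x\geq 0$ and, recalling that $x\in((k-1)h,kh)$ forces $k=k(h)=\lfloor x/h\rfloor+1$, look separately at the two structurally different groups of terms in \eqref{eq-Ei}: the ``randomness'' terms
\[
R_i(h,\theta)=\int_{y_i+2n_{i-1}s}^{y_i}\big(W(U_{h,\theta}(ih-,y))-W(U_{h,\theta_i}(ih-,y))\big)\,dy,
\]
which measure the discrepancy introduced by the random sampling in the $y$-direction, and the ``geometric'' terms, namely the integrals of $W(U_{h,\theta})$ over the short slabs of width $|d_i|s$ (together with the $\mathbf 1_{(d_i+1,1)}(\theta_i)$- and $\mathbf 1_{(-1,d_i-1)}(\theta_i)$-triggered full-cell terms).

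For the randomness terms I would run the standard Glimm argument. For fixed $h$, each $R_i(h,\theta)$ depends on $\theta$ only through $\theta_1,\dots,\theta_{i-1}$ and $\theta_i$, and $\mathbb E_{\theta_i}[R_i\mid\theta_1,\dots,\theta_{i-1}]$ vanishes because $W(U_{h,\theta_i}(ih-,\cdot))$ is precisely the cell-average rearrangement of $W(U_{h,\theta}(ih-,\cdot))$ over each $2s$-cell and $\theta_i$ is uniform on $(-1,1)$; hence the $R_i$ form a martingale difference sequence in $i$. Using $|W(U_{h,\theta}(ih-,y))-W(U_{h,\theta_i}(ih-,y))|\leq M\,\mathrm{T.V.}\{U_{h,\theta}(ih-,\cdot)\text{ over the relevant }2s\text{-cell}\}$ and the uniform BV bound \eqref{tv-u_h} from Theorem \ref{theorem-Uh}, together with $s\leq Mh$ from the CFL condition, one gets
\[
\mathbb E_\theta\Big|\sum_{i=1}^{k-1}R_i(h,\theta)\Big|^2
=\sum_{i=1}^{k-1}\mathbb E_\theta|R_i(h,\theta)|^2
\leq M\,h^2\sum_{i=1}^{k-1}\mathbb E_\theta\big[\mathrm{T.V.}\{U_{h,\theta}(ih-,\cdot)\text{ near }P_{i,n_{i-1}}\}\big]
\leq M\,h\cdot C\delta_0\cdot x,
\]
so $\mathbb E_\theta|\sum_i R_i|^2\to 0$ as $h\to0$; choosing $h=h_j\downarrow 0$ fast enough (say $h_j=2^{-j}$) makes $\sum_j\mathbb E_\theta|\sum_i R_i(h_j,\cdot)|^2<\infty$, and Borel--Cantelli yields a null set $\mathscr N_1'$ off which $\sum_i R_i(h_j,\theta)\to0$.

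The geometric terms I expect to be the easier half but they must still be handled: each short-slab integral is bounded by $M|d_i|s\leq M s\leq Mh$ in absolute value (using the $L^\infty$ bound on $W(U_{h,\theta})$ from Theorem \ref{theorem-Uh}), and there are $O(x/h)$ of them, so a crude sum gives only $O(x)$, not $o(1)$. The resolution is that these slab terms are \emph{telescoping in disguise}: the slab of $W(U_{h,\theta}(ih-,\cdot))$ near $y_i+2n_{i-1}s$ at step $i$, after the reacting step and the next Riemann step, reappears (up to $O(h)$ reaction corrections controlled by Lemma \ref{lemma-Re} and the decay \eqref{3.11x}, and up to a further $R$-type sampling error already absorbed above) as the corresponding slab at step $i+1$; equivalently, $d_i s$ is exactly the signed horizontal drift of the strong contact discontinuity relative to the mesh over one step, and $\sum_{i=1}^{k-1} d_i s=s^{(\cdot)}$-accumulated displacement $=\chi^{(k-1)}(x)-(y^{(0)}+2n_0 s)+O(h)$ stays bounded. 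So I would regroup \eqref{eq-Ei} as a telescoping sum whose surviving boundary contributions are $O(h)$ (one slab at $i=1$ and one at $i=k-1$) plus a genuinely convergent series of reaction corrections bounded by $M\sum_{i\geq1}e^{-l i h}h\|Z_0\|_\infty\leq M\|Z_0\|_\infty/l=O(1)$ — and here, since we only need the sum to vanish \emph{in the limit}, I would instead bound each reaction correction pointwise and note it contributes $O(h)$ to $\tilde E_i$ while the surviving telescoped terms are $O(h)$; summing $O(x/h)$ copies is \emph{not} done — rather the telescoping cancels all but $O(1)$ many, each $O(h)$. The main obstacle is precisely making this telescoping rigorous: one must track that ``the slab at step $i$'' and ``the slab at step $i+1$'' refer to the same piece of fluid up to the controlled errors, which requires using the explicit cell structure from Section \ref{Construction} and the bound $|\chi_{h,\theta}(x')-\chi_{h,\theta}(x'')|\leq C(|x'-x''|+h)$. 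Finally I would intersect $\mathscr N_1'$ with the (countably many, over rational $x$, then all $x$ by the uniform continuity of $\tilde E$ in $x$) null sets arising for the geometric part, call the union $\mathscr N_1$, and for $\theta\notin\mathscr N_1$ combine the two estimates to conclude $\sum_{i=1}^{k-1}\tilde E_i(h_j,\theta)\to0$, which is \eqref{estimate-E-i}.
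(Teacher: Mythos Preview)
Your treatment of the randomness terms $R_i$ is correct and matches the paper: they form a martingale difference sequence in $\theta_i$, the second-moment bound $\mathbb E_\theta\big|\sum_i R_i\big|^2=\sum_i\mathbb E_\theta|R_i|^2=O(xh)$ follows from the uniform BV bound and $s=O(h)$, and a subsequence with $\sum_j h_j<\infty$ plus Borel--Cantelli finishes. The gap is in your handling of the geometric terms. Your telescoping scheme is both unnecessary and, as you yourself flag, hard to make rigorous: the slab at step $i$ and the slab at step $i+1$ are integrals of $W(U_{h,\theta})$ at \emph{different} $x$-sections, and relating them would require the divergence theorem over the intermediate strip, which reintroduces exactly the flux terms you are trying to control; moreover the accumulated reaction corrections you cite are only $O(1)$, not $o(1)$, so the argument as written does not close.

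What you are missing is that the geometric terms \emph{also} have mean zero in $\theta_i$, so the entire $\tilde E_i$ is a martingale difference and no telescoping is needed. Take $d_i<0$ (the case $d_i>0$ is symmetric): on the slab $(y_i+(2n_{i-1}+d_i)s,\,y_i+2n_{i-1}s)$ the function $W(U_{h,\theta}(ih-,\cdot))$ equals a \emph{constant} $W^+$ independent of $\theta_i$ (this interval lies between the strong contact discontinuity and the nearest outgoing weak wave, by CFL), so the slab integral is $(-d_i)s\,W^+$. For the indicator term, when $\theta_i\in(d_i+1,1)$ the sampling point $y_i+(2n_{i-1}-1+\theta_i)s$ lands in that same constant slab, so the cell integral equals $2s\,W^+$; averaging the indicator over $\theta_i$ gives probability $(-d_i)/2$, hence expected value $(-d_i)s\,W^+$, and the minus sign in \eqref{eq-Ei} makes this cancel the slab integral exactly. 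Thus $\tfrac12\int_{-1}^1\tilde E_i(h,\theta)\,d\theta_i=0$, and the same $L^2$ orthogonality you used for $R_i$ now applies to all of $\tilde E_i$ at once, yielding $\int\big|\sum_i\tilde E_i\big|^2\,d\theta=\sum_i\int|\tilde E_i|^2\,d\theta\leq Cx(s/h)^2h$, which is precisely the paper's bound.
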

\begin{proof}
Note that $n_{i-1}$ is independent of $\theta_i$, then we have
\begin{align*}
&\frac{1}{2}\int\limits_{-1}^{1}\int\limits_{y_{i}+2n_{i-1}s}^{y_{i}}\big(W(U_{h,\theta}(ih-,y))
-W(U_{h,\theta_i}(ih-,y))dyd\theta_i\\
&=\frac{1}{2}\int\limits_{-1}^{1}\sum_{n=n_{i-1}}^{-1}\int\limits_{y_{i}+2ns}^{y_{i}+2(n+1)s}
\Big(W(U_{h,\theta}(ih-,y))-W(U_{h,\theta}(ih-,y_i+(2n+1+\theta_i))s)\Big)dyd\theta_i\\
&=\sum_{n=n_{i-1}}^{-1}\Big(\int\limits_{y_{i}+2ns}^{y_{i}+2(n+1)s}W(U_{h,\theta}(ih-,y))dy
-s\int\limits_{-1}^{1}W(U_{h,\theta}(ih-,y_i+(2n+1+\theta_i)s))d\theta_i\Big)\\
&=0.
\end{align*}

Next, note that if $d_i<0$, then $W(U_{h,\theta}(ih-,y))$ is a constant state independent of $\theta_i$ in the interval $(y_{i}+(2n_{i-1}+d_i)s,y_{i}+2n_{i-1}s)$; while if $d_i>0$, then $W(U_{h,\theta}(ih-,y))$ is a constant state independent of $\theta_i$ in the interval  $(y_{i}+2n_{i-1}s,y_{i}+(2n_{i-1}+d_i)s)$. Hence it follows from \eqref{eq-Ei} that
\[
\frac{1}{2}\int_{-1}^{1}\tilde{E}_{i}(h,\theta)d\theta_i=0.
\]

Therefore, we have
\[
\int|\sum_{i=1}^{k-1}\tilde{E}_{i}(h,\theta)|^2d\theta=\sum_{i=1}^{[x/h]}\int|\tilde{E}_{i}(h,\theta)|^2d\theta\leq Cx(\frac{s}{h})^2h.
\]
for some constant $C>0$. Then, we can show \eqref{estimate-E-i} by choosing a subsequence $\{h_j\}_{j=1}^{\infty}$ %\subset \{\Delta x\}$
with $\sum_{j=1}^{\infty}h_j<\infty$.
\end{proof}

Moreover, by \eqref{E1i} and \eqref{estimate-E-i}, we also have the following lemma.
\begin{lemma}\label{lemma-g-chi}
There exist positive constants $A_1$ and $A_2$, such that for any $x\geq 0$,
$$
A_1\leq g(x)-\chi(x)\leq A_2.
$$
\end{lemma}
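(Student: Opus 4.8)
The plan is to read the bound off the conservation-of-mass identity \eqref{E1i}. The key observations are: (a) the first component of $G(U)$ vanishes, so that $E_{1,i}(h,\theta)=\tilde E_{1,i}(h,\theta)$ is the first component of $\tilde E_i(h,\theta)$, and hence by Lemma~\ref{lemma-tildeE} there are a null set $\mathscr N_1$ and a subsequence $h_j\to 0$ along which $\sum_{i=1}^{k-1}E_{1,i}(h_j,\theta)\to 0$ for every $\theta\notin\mathscr N_1$; and (b) the quantity $\rho u = W_1(U)$ is bounded above and below by positive constants on $O_\epsilon({U}_2^{(0)})$, say $c_1:={\rho}_2^{(0)}{u}_2^{(0)}-M\epsilon>0$ and $c_2:={\rho}_2^{(0)}{u}_2^{(0)}+M\epsilon$, for $\epsilon$ small. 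On the right-hand side of \eqref{E1i}, since $U_{h,\theta}(0+,\cdot)$ is the cell-average of $U_0$ and $U_0$ is $\delta_0$-close to ${U}_2^{(0)}$ on $({y}^{(0)},0)$, we have $\int_{{y}^{(0)}}^{0}(\rho_{h,\theta}u_{h,\theta})(0+,y)\,dy\to m_0:=\int_{{y}^{(0)}}^{0}(\rho_0 u_0)(y)\,dy$, and $m_0$ satisfies $0<c_1|{y}^{(0)}|\le m_0\le c_2|{y}^{(0)}|$.

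Next I would pass to the limit in the remaining integral over the strip between the discrete contact discontinuity and the discrete wall. By $A_2(k)$ in Theorem~\ref{theorem-Uh}, $U_{h,\theta}|_{\Omega^{(2)}_{h}}\in O_\epsilon({U}_2^{(0)})$, so $c_1\le (\rho_{h,\theta}u_{h,\theta})(x-,y)\le c_2$ everywhere between $\chi^{(k-1)}$ and $g_{k-1,h}$; therefore
\[
c_1\big(g_{k-1,h}(x)-\chi^{(k-1)}(x)\big)\le\int_{\chi^{(k-1)}(x)}^{g_{k-1,h}(x)}(\rho_{h,\theta}u_{h,\theta})(x-,y)\,dy\le c_2\big(g_{k-1,h}(x)-\chi^{(k-1)}(x)\big).
\]
The uniform bound \eqref{tv-u_h} on the total variation and the uniform Lipschitz estimate on $\chi_{h,\theta}$ in Theorem~\ref{theorem-Uh} allow the standard Glimm--Helly compactness argument: along a further subsequence $U_{h_j,\theta}\to U$ in $L^1_{loc}$ and $\chi_{h_j,\theta}\to\chi$ uniformly on compact sets, while $g_{k-1,h_j}(x)\to g(x)$ by \eqref{g}. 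Hence the middle integral converges to $\int_{\chi(x)}^{g(x)}(\rho u)(x,y)\,dy$, and letting $h_j\to 0$ in \eqref{E1i} gives, for a.e.\ $x\ge 0$,
\[
\int_{\chi(x)}^{g(x)}(\rho u)(x,y)\,dy=m_0 .
\]

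Finally, since the limit entropy solution still satisfies $c_1\le (\rho u)(x,y)\le c_2$ a.e.\ between $\chi$ and $g$, the identity above yields $c_1\big(g(x)-\chi(x)\big)\le m_0\le c_2\big(g(x)-\chi(x)\big)$ for a.e.\ $x\ge 0$; as both $g$ and $\chi$ are Lipschitz, $g-\chi$ is continuous, so these inequalities hold for every $x\ge 0$, and the lemma follows with $A_1:=m_0/c_2$ and $A_2:=m_0/c_1$, both finite and positive. The main obstacle is the limiting procedure, namely justifying simultaneously that the accumulated error terms $\sum_i E_{1,i}(h_j,\theta)$ vanish (this is precisely Lemma~\ref{lemma-tildeE} combined with the vanishing of the first component of $G$) and that the traces $(\rho_{h_j,\theta}u_{h_j,\theta})(x-,\cdot)$ together with the free boundaries $\chi^{(k-1)}$ and $g_{k-1,h_j}$ all converge to the corresponding objects for the limit solution; this rests entirely on the uniform BV bound \eqref{tv-u_h} and the uniform Lipschitz bound on $\chi_{h,\theta}$ already established. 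Once the limit identity is in hand, the two-sided bound is an elementary consequence of $U$ remaining in $O_\epsilon({U}_2^{(0)})$.
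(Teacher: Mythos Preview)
Your proposal is correct and is exactly the argument the paper has in mind: the paper's own proof consists of the single remark that the lemma follows from \eqref{E1i} and \eqref{estimate-E-i}, and you have supplied precisely those details—vanishing of the first component of $G$ so that $E_{1,i}=\tilde E_{1,i}$, passage to the limit via Lemma~\ref{lemma-tildeE} and Theorem~\ref{theorem-Uh}, and the two-sided bound on $\rho u$ in $O_\epsilon({U}_2^{(0)})$.
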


\subsection{Integral average of the approximate solutions}\label{subsec:4.3s}
If $\tau\in ((i-1)h,ih)$, we define the integral average of the approximate solutions as
\[\bar{U}_h(\tau-):=\frac{1}{g_{i-1,h}(\tau)-\chi^{(i-1)}(\tau)}\int_{\chi^{(i-1)}(\tau)}^{g_{i-1,h}(\tau)}U_{h,\theta}(\tau-,y)dy,\]
and the integral average of the approximate initial data as
\[\bar{U}_{h,0}:=\frac{1}{|{y}^{(0)}|}\int_{{y}^{(0)}}^{0}U_{h,0}(y)dy.\]

Now, we will derive the equation satisfied by the integral average of the weak solution.
Replacing the approximate solutions in equations \eqref{E1i}-\eqref{E4i} by the integral average of the approximate solutions, \eqref{E1i}-\eqref{E4i} can be rewritten as
\begin{align}
&(g_{k-1,h}(x)-\chi^{(k-1)}(x))\bar{\rho}_h \bar{u}_h+\sum_{i=1}^{k-1}E_{1,i}(h,\theta)\nonumber\\
=&-{y}^{(0)}\bar{\rho}_{h,0}\bar{u}_{h,0}%\nonumber \\
-\int\limits_{\chi^{(k-1)}(x)}^{g_{k-1,h}(x)}(\rho_{h,\theta}-\bar{\rho}_h)(u_{h,\theta}-\bar{u}_h)dy
+\int\limits_{{y}^{(0)}}^{0}(\rho_{h,0}-\bar{\rho}_{h,0})(u_{h,0}-\bar{u}_{h,0})dy,\label{E11}\\
&(g_{k-1,h}(x)-\chi^{(k-1)}(x))(\bar{\rho}_h \bar{u}_h^2+\bar{p}_h)+\sum_{i=1}^{k-1}E_{2,i}(h,\theta)\nonumber\\
=&-{y}^{(0)}(\bar{\rho}_{h,0}\bar{u}_{h,0}^2+\bar{p}_{h,0})
+\sum_{i=1}^{k-1}(b_{i-1}-s^{(i-1)})\int\limits_{(i-1)h}^{ih}\bar{p}_hd\tau+(b_{k-1}
-s^{(k-1)})\int\limits_{(k-1)h}^{x}\bar{p}_hd\tau\nonumber\\
&-\int\limits_{\chi^{(k-1)}(x)}^{g_{k-1,h}(x)}(\rho_{h,\theta} u_{h,\theta}-\overline{\rho_{h} u_h})(u_{h,\theta}-\bar{u}_h)dy
-\bar{u}_h\int\limits_{\chi^{(k-1)}(x)}^{g_{k-1,h}(x)}(\rho_{h,\theta}-\bar{\rho}_h)(u_{h,\theta}-\bar{u}_h)dy\nonumber\\
&+\int\limits_{{y}^{(0)}}^{0}(\rho_{h,0} u_{h,0}-\overline{\rho_{h,0} u_{h,0}})(u_{h,0}-\bar{u}_{h,0})dy
+\bar{u}_{h,0}\int\limits_{{y}^{(0)}}^{0}(\rho_{h,0}-\bar{\rho}_{h,0})(u_{h,0}-\bar{u}_{h,0})dy\nonumber \\
&+\sum_{i=1}^{k-1}\int\limits_{(i-1)h}^{ih}\Big(
(b_{i-1}-s^{(i-1)})(p_{h,\theta}|_{y=\chi^{(i-1)}}-\bar{p}_h)+b_{i-1}(p_{h,\theta}|_{y=g_{i-1}}
-p_{h,\theta}|_{y=\chi^{(i-1)}})\Big)d\tau \nonumber\\
&+\int\limits_{(k-1)h}^{x}\Big(
(b_{k-1}-s^{(k-1)})(p_{h,\theta}|_{y=\chi^{(k-1)}}-\bar{p}_h)+b_{k-1}(p_{h,\theta}|_{y=g_{k-1}}-
p_{h,\theta}|_{y=\chi^{(k-1)}})\Big)d\tau,\label{E21}\\
&(g_{k-1,h}(x)-\chi^{(k-1)}(x))\bar{\rho}_h \bar{u}_h(\frac{\gamma \bar{p}_h}{(\gamma-1)\bar{\rho}_h}+\frac{1}{2}\bar{u}_h^2)+\sum_{i=1}^{k-1}E_{3,i}(h,\theta)\nonumber \\
=&-{y}^{(0)}\bar{\rho}_{h,0} \bar{u}_{h,0}(\frac{\gamma \bar{p}_{h,0}}{(\gamma-1)\bar{\rho}_{h,0}}+\frac{1}{2}\bar{u}_{h,0}^2)
-\frac{\bar{u}_h^2}{2}\int\limits_{\chi^{(k-1)}(x)}^{g_{k-1,h}(x)}(\rho_{h,\theta}
-\bar{\rho}_h)(u_{h,\theta}-\bar{u}_h)dy\nonumber \\
&-\frac{1}{2}\overline{(u_{h}-\bar{u}_h)^2+v_{h}^2}\int\limits_{\chi^{(k-1)}(x)}^{g_{k-1,h}(x)}\rho_{h,\theta} u_{h,\theta} dy
-\frac{\gamma}{\gamma-1}\int\limits_{\chi^{(k-1)}(x)}^{g_{k-1,h}(x)}(p_{h,\theta}
-\bar{p}_h)(u_{h,\theta}-\bar{u}_h)dy\nonumber \\
&-\frac{1}{2}
\int\limits_{\chi^{(k-1)}(x)}^{g_{k-1,h}(x)}(\rho_{h,\theta} u_{h,\theta}-\overline{\rho_{h} u_{h}})(u_{h,\theta}^2+v_{h,\theta}^2-\overline{u_h^2+v_h^2})
dy+\frac{\bar{u}_{h,0}^2}{2}\int\limits_{{y}^{(0)}}^{0}(\rho_{h,0}-\bar{\rho}_{h,0})(u_{h,0}-\bar{u}_{h,0})dy\nonumber \\
&+\frac{1}{2}\overline{(u_{h,0}-\bar{u}_{h,0})^2+v_{h,0}^2}\int\limits_{{y}^{(0)}}^{0}\rho_{h,0} u_{h,0} dy
+\frac{\gamma}{\gamma-1}\int\limits_{{y}^{(0)}}^{0}(p_{h,0}-\bar{p}_{h,0})(u_{h,0}-\bar{u}_{h,0})dy\nonumber \\
&+\frac{1}{2}\int\limits_{{y}^{(0)}}^{0}(\rho_{h,0} u_{h,0}-\overline{\rho_{h,0} u_{h,0}})
(u_{h,0}^2+v_{h,0}^2-\overline{u_{h,0}^2+v_{h,0}^2})dy,\label{E31}\\
\mbox{and}\nonumber\\
&(g_{k-1,h}(x)-\chi^{(k-1)}(x))(\bar{\rho}_h \bar{u}_h \bar{Z}_h)+\sum_{i=1}^{k-1}E_{4,i}(h,\theta)\nonumber\\
=&-{y}^{(0)}\bar{\rho}_{h,0}\bar{u}_{h,0}\bar{Z}_{h,0}%\nonumber\\
-\int\limits_{\chi^{(k-1)}(x)}^{g_{k-1,h}(x)}(\rho_{h,\theta} u_{h,\theta}-\overline{\rho_h u_h})(Z_{h,\theta}-\bar{Z}_h)dy
-\bar{Z}_h\int\limits_{\chi^{(k-1)}(x)}^{g_{k-1,h}(x)}(\rho_{h,\theta}-\bar{\rho}_h)(u_{h,\theta}-\bar{u}_h)dy\nonumber\\
&+\int\limits_{{y}^{(0)}}^{0}(\rho_{h,0} u_{h,0}-\overline{\rho_{h,0} u_{h,0}})(Z_{h,0}-\bar{Z}_{h,0})dy
+\bar{Z}_{h,0}\int\limits_{{y}^{(0)}}^{0}(\rho_{h,0}-\bar{\rho}_{h,0})(u_{h,0}-\bar{u}_{h,0})dy.\label{E41}
\end{align}

Therefore, in order to derive the equations that the integral average of the solution of \eqref{eq-reaction-1} satisfies, %estimate the difference between integral average of the solution to system \eqref{eq-reaction-1} and the solution of system \eqref{eq-QO2},
we need to analyze the error terms at the right hand side of \eqref{E11}-\eqref{E41} as $h\to 0$ first.

By Theorem \ref{theorem-exist}, it holds that the terms like
$\int_{\chi(\tau)}^{g(\tau)}(\rho-\bar{\rho})(u-\bar{u})dy$
can be bounded by the square of the total variation of the weak solution, \emph{i.e.},
$$
\int_{\chi(\tau)}^{g(\tau)}(\rho-\bar{\rho})(u-\bar{u})dy= O(1)\delta_*^2,
$$
with $\delta_*$ in Theorem \ref{theorem-Quasi-One}. Next, from the decay property of the reactant $Z$, \emph{i.e.}, Lemma \ref{lemma-Re} and \eqref{3.11x}, we know that
\begin{align*}
&\int\limits_{0}^{x}\int\limits_{\chi(\tau)}^{g(\tau)}\rho \phi(T)Z dyd\tau-\int\limits_0^x (g(\tau)-\chi(\tau))\bar{\rho} \phi(\bar{T})\bar{Z}d\tau\\
=&\int\limits_{0}^{x}\int\limits_{\chi(\tau)}^{g(\tau)} \rho (\phi(T)-\phi(\bar{T}))Z dyd\tau+\int\limits_{0}^{x}\int\limits_{\chi(\tau)}^{g(\tau)}
(\rho-\bar{\rho})\phi(\bar{T})(Z-\bar{Z})dyd\tau\\
=&O(1)\delta_*^2.
\end{align*}

Therefore, we only need to estimate the last two terms in the right hand side of \eqref{E21}. To do that, we will carefully derive several estimates on the approximate strong contact discontinuity. Using the notations in the proof of Proposition \ref{prop-f}, we define $Q_{h,\theta}(\Lambda)$ based on the location of $\Lambda$,
\begin{equation}
Q_{h,\theta}(\Lambda)=
\begin{cases}
Q(\Lambda)                             & \text{for Case 1},\\
|\omega_k|+\sum_{i=2}^{5}|\alpha_i|    & \text{for Case 2},\\
\sum_{i=1}^{4}|\beta_i|                & \text{for Case 3.1},\\
\sum_{i=2}^{5}|\alpha_i|               & \text{for Case 3.2}.
\end{cases}
\end{equation}

Let $\Lambda_b=\cup_{k=1}^{+\infty}\Lambda_{k,0}$, where $\Lambda_{k,0}$ is the diamond centered at $P_k$. Let $L_{h,\theta}^b(\Lambda_b)$ be the summation of the strengths of the $1$-waves leaving $\Lambda_{b}$.

Similarly, let $\Lambda_c=\cup_{k=1}^{+\infty}\Lambda_{k,n_k}$, where $\Lambda_{k,n_k}$ is the diamond covering the strong contact discontinuity. Let
$L_{h,\theta}^c(\Lambda_c)$ be the summation of the strengths of the $5$-waves leaving $\Lambda_{c}$. Then, by \eqref{estimate-in-c1}, \eqref{estimate-in-c2}, and  \eqref{estimate-Fc}, we have
\begin{lemma}
There exists a constant M, independent of $U_{h,\theta}$, $\theta$, and $h$, such that
\begin{equation}\label{estimate-Q}
\sum_{\Lambda}Q_{h,\theta}(\Lambda)\leq M,\quad L_{h,\theta}^b(\Lambda_b)\leq M,\quad L_{h,\theta}^c(\Lambda_c)\leq M.
\end{equation}
where the summation is over all the diamonds $\Lambda$.
\end{lemma}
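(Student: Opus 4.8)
The plan is to obtain all three bounds from the monotonicity of the Glimm‑type functional of Section \ref{sec:GE} together with the interaction estimates of Section \ref{sec:LE}. I would organise it in two steps.

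\emph{Step 1 (the bound $\sum_{\Lambda}Q_{h,\theta}(\Lambda)\le M$).} First I would sharpen the proof of Proposition \ref{prop-f} from ``$F(J)\le F(I)$'' to a quantitative decrease: Case~1 already gives $F(J)-F(I)\le-\tfrac12 Q(\Lambda)$, and in Cases~2, 3.1 and 3.2 every coefficient in front of $|\omega_k|,|\alpha_i|,|\beta_i|$ in the bound for $L^1(J)+L^2(J)-L^1(I)-L^2(I)$ is \emph{strictly} negative, because of the strict inequalities in Lemma \ref{lemma-choice} (with $K$ fixed large and $C_1^{*},\tilde\epsilon$ fixed small). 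This yields a universal $c_0>0$, independent of $h$ and $\theta$, such that across every diamond $\Lambda$ with immediate predecessor/successor mesh curves $I,J$ one has $F(I)-F(J)\ge c_0\,Q_{h,\theta}(\Lambda)$. I would then combine this with the reacting‑step estimate of Proposition \ref{prop:3.2} and the auxiliary functional $F_c$ of Lemma \ref{lem:3.2}: $F_c$ is non‑increasing along the whole chain $\tilde J_0\succeq\tilde J_1\succeq\cdots$, and $0\le F_c(\tilde J_k)\le F_c(\tilde J_0)\le M$ uniformly in $h,\theta$, since $\sum_{j\ge1}e^{-ljh}h$ is a bounded Riemann sum. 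Every diamond is crossed exactly once on the way from $\tilde J_0$ to $\lim_k\tilde J_k$, and the reacting steps only decrease $F_c$, so summation and telescoping give
\[
c_0\sum_{\Lambda}Q_{h,\theta}(\Lambda)\le F_c(\tilde J_0)-\lim_{k\to\infty} F_c(\tilde J_k)\le M .
\]

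\emph{Step 2 (the bounds $L_{h,\theta}^b(\Lambda_b)\le M$ and $L_{h,\theta}^c(\Lambda_c)\le M$).} Here I would follow the ping‑pong of weak $1$‑ and $5$‑waves between the approximate wall and the strong contact discontinuity. By \eqref{estimate-in-c1}--\eqref{estimate-in-c2}, the $5$‑wave emitted by a diamond covering the contact discontinuity has strength at most $|K_{25}|$ times that of the impinging $1$‑wave, plus the strengths of the $5$‑waves already present on the region‑$(2)$ side, plus an interaction error $O(1)(\Delta'+\Delta'')$; and by Proposition \ref{prop-in-b}, the $1$‑wave emitted by a boundary diamond has strength at most $K_{b5}=1$ times that of the impinging $5$‑wave plus an error $O(1)(|\omega_k|+|\alpha_2|+|\alpha_3|)$. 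Writing $S_1=L_{h,\theta}^b(\Lambda_b)$ and $S_5=L_{h,\theta}^c(\Lambda_c)$ and summing these relations over all diamonds, each error term is absorbed either into $M\sum_{\Lambda}Q_{h,\theta}(\Lambda)$ (bounded by Step~1) or into the total strength of the weak waves hitting the contact discontinuity from region~$(1)$ and of the upstream waves in region~$(2)$ (bounded by \eqref{tv-u_h} of Theorem \ref{theorem-Uh}); what remains is the closed pair
\[
S_5\le|K_{25}|\,S_1+M,\qquad S_1\le S_5+M,
\]
from which $S_1\le M/(1-|K_{25}|)$ and hence $S_5\le M$. Equivalently, this is precisely the content of the combination $K^{*}_{11}L^{2}_{1}+K^{*}_{15}L^{2}_{5}$ in Definition \ref{def-f}: the constraints $K^{*}_{11}|K_{21}|+K^{*}_{25}|K_{25}|<1$ and $K^{*}_{11}|K_{11}|+K^{*}_{25}|K_{15}|<K^{*}_{15}$ of Lemma \ref{lemma-choice} make the decrease of $F$ across boundary and contact diamonds dominate the strengths of the $1$‑ and $5$‑waves these diamonds emit, so that $S_1+S_5\le M\big(\sum_{\Lambda}Q_{h,\theta}(\Lambda)+F_c(\tilde J_0)\big)\le M$.

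I expect Step~2 to be the main obstacle, specifically the feedback loop through the reflections: a naive estimate only yields $S_1\le S_5+M$ together with $S_5\le S_1+M$, which is vacuous, and one genuinely needs the strict stability inequality $|K_{25}|<1$ — equivalently, that a round trip wall$\to$discontinuity$\to$wall multiplies the wave strength by $|K_{25}|K_{b5}=|K_{25}|<1$ — to close the geometric series. A secondary point to watch is the reacting step: it can increase $Q_{h,\theta}$, $L^b$ and $L^c$, but only by the summable quantity $Me^{-lkh}h\|Z_0\|_{\infty}$, which is harmless once one works with $F_c$ in place of $F$.
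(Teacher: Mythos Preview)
Your Step 1 is correct and is exactly what the paper's one-line citation of \eqref{estimate-Fc} is pointing at: the strict inequalities in Lemma \ref{lemma-choice} make the decrease of $F$ across each diamond at least $c_0 Q_{h,\theta}(\Lambda)$, and telescoping along the chain $\tilde J_0\succ\tilde J_1\succ\cdots$ (with the summable reaction error absorbed into $F_c$) yields $\sum_\Lambda Q_{h,\theta}(\Lambda)\le M$.

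In Step 2 there is a genuine gap. You write that by Proposition \ref{prop-in-b} ``the $1$-wave emitted by a boundary diamond has strength at most $K_{b5}=1$ times that of the impinging $5$-wave plus an error $O(1)(|\omega_k|+|\alpha_2|+|\alpha_3|)$'', but the estimate \eqref{estimate-in-b} actually reads $\gamma_1=\beta_1+K_{b0}\omega_k+K_{b2}\alpha_2+K_{b3}\alpha_3+K_{b5}\alpha_5$, with an additional leading term $\beta_1$ equal to the $1$-wave emitted by the \emph{previous} boundary Riemann problem. If one sums $|\gamma_1^{(k)}|$ over $k$ naively, this $\beta_1^{(k)}=\gamma_1^{(k-1)}$ term reproduces the whole sum on the right, and your inequality $S_1\le S_5+M$ collapses to the vacuous $S_1\le S_1+M$. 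The same pass-through phenomenon occurs at the contact: in \eqref{estimate-in-c1} the emitted $5$-wave satisfies $\gamma_5=K_{25}\beta_1+\alpha_5+\beta_5+O(\Delta')$, and $\alpha_5$ (respectively $\beta_5$ in Case 3.2) is again the $5$-wave from the previous contact diamond.

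The remedy is to read ``$1$-waves leaving $\Lambda_b$'' and ``$5$-waves leaving $\Lambda_c$'' as waves exiting the \emph{union} $\Lambda_b=\bigcup_k\Lambda_{k,0}$ (resp.\ $\Lambda_c=\bigcup_k\Lambda_{k,n_k}$) into the interior of region $(2)$; consecutive boundary diamonds (resp.\ contact diamonds) share an edge, and the pass-through wave $\beta_1^{(k)}=\gamma_1^{(k-1)}$ (resp.\ $\alpha_5$ or $\beta_5$) lies on that shared edge, hence does not count as leaving. With this reading, the net $1$-wave strength exiting $\Lambda_b$ over one step is $|\gamma_1^{(k)}-\beta_1^{(k)}|\le M\,Q_{h,\theta}(\Lambda_{k,0})$, and likewise the net $5$-wave strength exiting $\Lambda_c$ is $\le M\,Q_{h,\theta}(\Lambda_{k,n_k})$; summing and invoking Step 1 gives the bounds directly, without the ping-pong. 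Your closing ``Equivalently'' paragraph gestures at this, but the functional you name ($K_{11}^{*}L_1^2+K_{15}^{*}L_5^2$) is not the one in Definition \ref{def-f} (there $L_1^2$ carries coefficient $1$), and in any case the decrease of $F$ controls the \emph{entering} strengths $|\beta_1|,|\alpha_5|$ in $Q_{h,\theta}(\Lambda)$, not the leaving ones, so the link must go through the cancellation just described rather than through the weights in $F$.
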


Next, let $\theta\in \prod_{k=1}^{\infty}(-1,1)\backslash \mathscr{N}$ be equidistributed, then we will prove the following lemma.
\begin{lemma}\label{IntTV}
There exists a positive constant $C$, such that
\[
\int_{0}^{+\infty}T.V.\{(\frac{v(\tau,\cdot)}{u(\tau,\cdot)},\, p(\tau,\cdot))\big|_{(\chi(\tau),g(\tau))}\}d\tau\leq C\delta_*.
\]
\end{lemma}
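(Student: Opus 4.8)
The plan is to reduce the total variation of $(v/u,\,p)$ along the line $x=\tau$, restricted to the strip between the strong contact discontinuity and the wall, to the total strength of the weak $1$- and $5$-waves contained in that strip, and then to integrate in $\tau$ by exploiting the strict decay of waves bouncing between the wall and the strong contact discontinuity. As usual I would prove the bound for the approximate solutions $U_{h,\theta}$ uniformly in $h$, and then let $h\to 0$ using lower semicontinuity of the total variation and Fatou's lemma, together with the facts (Theorem \ref{theorem-Uh}) that $\chi_{h,\theta}\to\chi$ and $g_{k-1,h}\to g$ uniformly on compact sets and $U_{h,\theta}\to U$ in $L^1_{loc}$. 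For the pointwise reduction, recall from the contact wave curves \eqref{c1}--\eqref{c3} that $v/u$ and $p$ are unchanged across weak $2$-, $3$- and $4$-waves, are continuous across the strong contact discontinuity (which is the lower boundary of the strip) and are unaffected at the wall by the slip condition \eqref{B}; hence, for $\tau\in((k-1)h,kh)$,
\[
T.V.\Big\{\big(\tfrac{v_{h,\theta}}{u_{h,\theta}},\,p_{h,\theta}\big)(\tau,\cdot)\big|_{(\chi^{(k-1)}(\tau),\,g_{k-1,h}(\tau))}\Big\}\leq M\,\mathcal{L}_h(\tau),
\]
where $\mathcal{L}_h(\tau)$ is the sum of the strengths of all weak $1$- and $5$-waves of $U_{h,\theta}(\tau,\cdot)$ lying in region $(2)$.

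Next I would bound the lifetime of such waves. By Lemma \ref{lemma-g-chi} the strip has width at most $A_2$; since $u>c$ near the fixed background states, $|\lambda_1|$ and $\lambda_5$ are bounded below by a positive constant depending only on the data, while the slopes of $\Gamma_k$ and of $\chi^{(k)}$ are $O(\delta_0)$. Hence a weak $1$- or $5$-wave created in region $(2)$ reaches the wall or the strong contact discontinuity within a fixed time $\tau_0=\tau_0(\mathrm{data})$. Writing $\mathcal{S}_h$ for the sum, over all mesh levels, of the strengths of the weak $1$- and $5$-waves that appear in region $(2)$ either at $x=0$, at a wall reflection, at an interaction with the strong contact discontinuity (from above via Proposition \ref{prop-in-c1} or from below via Proposition \ref{prop-in-c2}), at a weak--weak interaction inside region $(2)$, or during a reacting step, a standard charging argument gives
\[
\int_0^{\infty}\mathcal{L}_h(\tau)\,d\tau\ \leq\ \tau_0\,\mathcal{S}_h+O(1)\sum_{\Lambda}Q_{h,\theta}(\Lambda),
\]
the last sum being $O(\delta_*)$ because the Glimm-type functional $F$ is small and decreasing.

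The crux is then to show $\mathcal{S}_h=O(\delta_*)$, which is where $|K_{25}|<1$ enters. Split $\mathcal{S}_h=\mathcal{S}_h^{(1)}+\mathcal{S}_h^{(5)}+O(\delta_*)$, where $\mathcal{S}_h^{(1)}$ collects the $1$-waves produced at wall reflections and $\mathcal{S}_h^{(5)}$ the $5$-waves produced at the strong contact discontinuity, the $O(\delta_*)$ term absorbing the initial waves in region $(2)$ (total strength $O(T.V.\{U_2\}+\sup|U_2-U_2^{(0)}|)$), the weak--weak and reacting contributions (by Lemma \ref{lemma-Re}, \eqref{3.11x} and Propositions \ref{prop-Re1}--\ref{prop-Re3}, recalling that $Z$ stays $O(\delta_*)$ in region $(2)$ since it does not cross the strong contact discontinuity), and the $5$-waves transmitted into region $(2)$ from region $(1)$ (whose total strength is bounded by $T.V.\{U_1\}$ plus quadratic terms, since in the half-space region $(1)$ a downward $1$-wave simply escapes to $y=-\infty$ and never returns). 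A $1$-wave created at the wall travels down to the strong contact discontinuity and by Proposition \ref{prop-in-c1} feeds the reflected $5$-wave with coefficient $K_{25}$; a $5$-wave reaching the wall feeds the reflected $1$-wave with coefficient $K_{b5}=1$ by Proposition \ref{prop-in-b}, up to the genuinely new term $K_{b0}\omega_k$ (total $O(T.V.\{g'\})=O(\delta_*)$) and small $O(|\alpha_2|+|\alpha_3|)$ corrections. This yields the closed system
\[
\mathcal{S}_h^{(1)}\leq \kappa_w\,\mathcal{S}_h^{(5)}+O(\delta_*),\qquad \mathcal{S}_h^{(5)}\leq \kappa_c\,\mathcal{S}_h^{(1)}+O(\delta_*),
\]
with $\kappa_w=1+O(\delta_0)$, $\kappa_c=|K_{25}|+O(\delta_0)$, and $\kappa_w\kappa_c<1$ for $\delta_0$ small since $|K_{25}|<1$. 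Therefore $\mathcal{S}_h^{(1)},\mathcal{S}_h^{(5)}=O(\delta_*)$, hence $\mathcal{S}_h=O(\delta_*)$; combining with the previous two displays and letting $h\to 0$ completes the proof.

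The main obstacle is precisely this last step: one has to organize the bookkeeping so that every weak $1$-/$5$-wave segment in region $(2)$ is charged exactly once, either to an injected wave (initial data on $({y}^{(0)},0)$, wall perturbation, region-$(1)$ transmission, or reacting step, all of total strength $O(\delta_*)$) or to a previous wave through one round trip of the wall--contact map, and then to establish that the round-trip amplification factor is strictly less than one. The strict inequality $|K_{25}|<1$ (the stability condition of \cite{Tougeron1997,Tougeron1993}) together with $K_{b5}=1$ is exactly what makes this possible; it is also what turns the useless bound $\int_0^\infty M\delta_0\,d\tau=\infty$ into the finite answer $O(\delta_*)$.
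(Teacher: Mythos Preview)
Your approach is correct in spirit and takes a genuinely different route from the paper's proof. Both start with the same reduction---$v/u$ and $p$ are unaffected by the linearly degenerate families, so only weak $1$- and $5$-waves in region~(2) contribute---and both exploit the finite crossing time of the strip (Lemma~\ref{lemma-g-chi}). The difference lies in how the integral $\int_0^\infty(\tilde L_1+\tilde L_5)\,d\tau$ is controlled.

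The paper follows the Glimm--Lax generalized-characteristics method \cite{GlimmLax1970,Liu1977Behavior}: it fixes a partition $0=X_0^*<X_1^*<\cdots$ with uniform step $X^*$ equal to the maximal crossing time, draws the extremal $1$- and $5$-characteristics from the wall and the contact at $x=X_{k-1}^*$, and observes that by $x=X_k^*$ every $1$-wave (respectively $5$-wave) present at $X_{k-1}^*$ has already been absorbed at the contact (respectively the wall). Hence $\tilde L_{1}(X_k^*-)$ and $\tilde L_{5}(X_k^*-)$ are bounded by the sources produced inside $(X_{k-1}^*,X_k^*)$: the $1$-waves leaving the wall diamonds ($dL^b$), the $5$-waves leaving the contact diamonds ($dL^c$), the interior interaction measure $dQ$, and the reaction contribution $(e^{-lX_{k-1}^*}-e^{-lX_k^*})\|Z_0\|_\infty$. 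Summing over $k$ then uses the already-established total bounds of \eqref{estimate-Q}, which come from the monotonicity of $F_c$ and hence already encode $|K_{25}|<1$ through Lemma~\ref{lemma-choice}. In short, the paper \emph{outsources} the contraction $|K_{25}|<1$ to the Glimm functional and then just sums.

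Your argument instead invokes the contraction directly, via the closed $2\times2$ system $\mathcal S_h^{(1)}\le\kappa_w\mathcal S_h^{(5)}+O(\delta_*)$, $\mathcal S_h^{(5)}\le\kappa_c\mathcal S_h^{(1)}+O(\delta_*)$ with $\kappa_w\kappa_c<1$. This is more transparent and self-contained; it does not need the separate lemma on $L^b,L^c$. The one point that needs care---which you flag yourself---is the definition of $\mathcal S_h^{(1)},\mathcal S_h^{(5)}$: at each wall diamond the outgoing $1$-wave $\gamma_1$ contains the incoming $\beta_1$ from the \emph{adjacent} wall diamond (Proposition~\ref{prop-in-b}), so summing $|\gamma_1|$ over \emph{all} wall diamonds is self-referential and does not yield the inequality you wrote. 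The fix is to count each physical $1$-wave once, at the moment it leaves the wall strip and begins its crossing, and likewise each $5$-wave once when it leaves the contact strip; equivalently, take $\mathcal S_h^{(1)}$ to be the total $1$-wave strength \emph{arriving} at the contact diamonds (the $\beta_1$'s of Case~3.1) and $\mathcal S_h^{(5)}$ the total $5$-wave strength \emph{arriving} at the wall diamonds (the $\alpha_5$'s of Case~2). With that normalization your two inequalities hold with the stated constants and the geometric-series closure is clean. The paper's generalized-characteristics partition is precisely one way to make this ``count once per crossing'' rigorous.
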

\begin{proof}
Since the velocity ratio $v_{h,\theta}/u_{h,\theta}$ and the pressure $p_{h,\theta}$ are invariant across the contact discontinuity, we only need to estimate the strengths of the weak $1$-wave and the weak $5$-wave. As in \cite{GlimmLax1970}, we denote by $dQ_{h,\theta}$ the measure assigning to $Q_{h,\theta}(\Lambda)$, %to the center $\Lambda$,
and by $dL_{h,\theta}^{b}$ and $dL_{h,\theta}^{c}$ the measure assigning to $L_{h,\theta}^b(\Lambda_b)$ and $L_{h,\theta}^c(\Lambda_c)$, respectively.

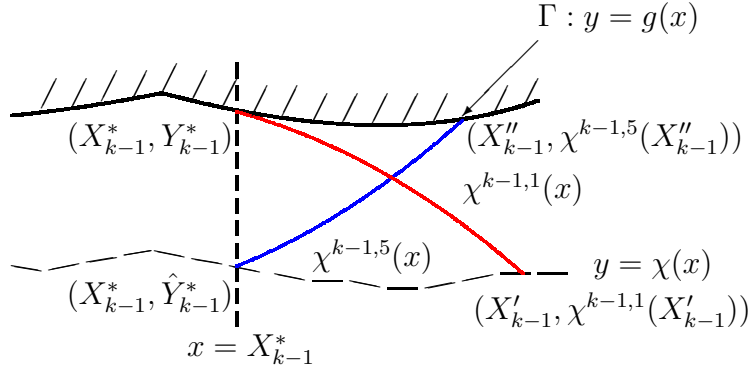
\begin{figure}[h]
\begin{center}
\setlength{\unitlength}{1mm}
\begin{picture}(120,40)(15,3)
\linethickness{1pt}
\qbezier(30,30)(40,31)(50,33)
\qbezier(50,33)(80,25)(100,32)
\thinlines
\put(30,10){\line(6,-1){4}}
\put(35,9.5){\line(5,1){4}}
\put(40,10.5){\line(6,1){4}}
\put(45,11.5){\line(6,1){3.6}}
\put(49,12){\line(5,-1){4}}
\put(54,11){\line(6,-1){4}}
\put(59,10){\line(5,-1){4}}
\put(65,9){\line(5,-1){4}}
\put(70,8){\line(1,0){4}}
\put(75,8){\line(5,-1){4}}
\put(80,7){\line(1,0){4}}
\put(85,7){\line(5,1){4}}
\put(90,8){\line(5,1){4}}
\put(95,9){\line(1,0){4}}
\put(100,9){\line(1,0){4}}
\thicklines
\multiput(60,2)(0,3){12}{\line(0,1){2}}
\color{blue}
\qbezier(60,10)(75,16)(90,29.4)
\color{red}
\qbezier(60,30.5)(80,25)(98,9)
\thinlines
\color{black}
\put(34,31){\line(1,2){2}}
\put(38,31){\line(1,2){2}}
\put(42,32){\line(1,2){2}}
\put(46,32){\line(1,2){2}}
\put(50,33){\line(1,2){2}}
\put(54,32){\line(1,2){2}}
\put(58,31){\line(1,2){2}}
\put(62,30){\line(1,2){2}}
\put(66,30){\line(1,2){2}}
\put(70,29){\line(1,2){2}}
\put(74,29){\line(1,2){2}}
\put(78,29){\line(1,2){2}}
\put(82,29){\line(1,2){2}}
\put(86,29){\line(1,2){2}}
\put(94,30){\line(1,2){2}}
\put(98,31){\line(1,2){2}}
\put(36,26){ $(X_{k-1}^*,Y_{k-1}^*)$}
\put(36,5){ $(X_{k-1}^*,\hat{Y}_{k-1}^*)$ }
\put(90,3){ $(X'_{k-1},\chi^{k-1,1}(X'_{k-1}))$}
\put(89,26){ $(X''_{k-1},{\chi}^{k-1,5}(X''_{k-1}))$}

\put(100,40){\vector(-1,-1){10}}
\put(100,42){$\Gamma:y=g(x)$}
\put(52,-2){ $x=X_{k-1}^*$}
\put(106,9){ $y=\chi(x)$}
\put(90,19){$\chi^{k-1,1}(x)$}
\put(70,10){$\chi^{k-1,5}(x)$}
\end{picture}
\end{center}
\caption{Generalized characteristics in $U_{h,\theta}$.}\label{15}
\end{figure}

As shown in Fig. \ref{15}, let the line $x=X_{k-1}^*$ intersect $\partial\Omega$ and $y=\chi(x)$ at $(X_{k-1}^*,Y_{k-1}^*)$ and $(X_{k-1}^*,\hat{Y}_{k-1}^*)$ respectively. %for some $Y_{k-1}^*$ and $\hat{Y}_{k-1}^*$.
Let $y=\chi^{k-1,1}(x)$ be the maximal $1$-generalized characteristics in $U_{h,\theta}$ emanating from the point $(X_{k-1}^*,Y_{k-1}^*)$, and let $y=\chi^{k-1,5}(x)$ be the minimum $5$-generalized characteristics in $U_h$ emanating from the point $(X_{k-1}^*,\hat{Y}_{k-1}^*)$.
Moreover, let $y=\chi^{k-1,1}(x)$ and $y={\chi}^{k-1,5}(x)$ intersect $y=\chi(x)$ and $\partial\Omega$ at $(X'_{k-1},\chi^{k-1,1}(X'_{k-1}))$ and $(X''_{k-1},{\chi}^{k-1,5}(X''_{k-1}))$ respectively for some $X'_{k-1}$ and $X''_{k-1}$. Thus, by Lemma \ref{lemma-g-chi}, there exists a constant $X^*>0$, independent of $X_{k-1}^*$, such that  $X_{k-1}^{*}+X^*$ is greater than $X'_{k-1}$ and $X''_{k-1}$. Then we get a sequence $\{X_{k}^{*}\}_{k=0}^{\infty}$ by setting $X_{k}^{*}=X_{k-1}^{*}+X^*$.

we denote by $\Omega_{k-1}^{*}$ the domain with the boundaries that $x=X_{k-1}^*$, $x=X_{k}^*$, $\partial\Omega_h$, and $y=\chi_{h,\theta}(x)$. Let $\tilde{L}_{1,h}(X-)$ (or $L_{1,h}(X-)$) be the summation of all the strength of the weak $1$-waves after (or before) the reaction step on the line $x=X$. Obviously for $ih<X\leq(i+1)h$,
$$
\tilde{L}_{1,h}(X-)-L_{1,h}(X-)\leq M e^{-lih}h\|Z_{0}\|_{\infty}.
$$
Then by the equations that the approximate solutions satisfy, we can deduce in the same way as the one in \cite{GlimmLax1970,Liu1977Behavior} that %the summation of all the strength of the weak $1$-wave
on the line $x=X_{k}^*$, if $h_j$ is sufficiently small, then
\[
\tilde{L}_{1,h_j}(X_{k}^*-)=O(1)(dL^{b}_{h_j,\theta}(\Lambda_{b,k-1}^{*})+dQ_{h_j,\theta}(\Lambda_{k-1}^{*})
+(e^{-lX_{k-1}^*}-e^{-lX_{k}^*})\|Z_{0}\|_{\infty}),
\]
where $\Lambda_{b,k-1}^{*}$ consists of the diamonds covering $\partial\Omega_{k-1}^{*}\cap\partial\Omega_h$, $\Lambda_{k-1}^{*}$ consists of the diamonds in the interior of $\Omega_{k-1}^{*}$, and the bound of $O(1)$ is independent of $U_{h_j}$ and $h_j$.

Similarly, let $\tilde{L}_{5,h_j}(X-)$ stand for the summation of all the strength of the weak $5$-waves on the line $x=X$ after the reaction step, then on the line $x=X_{k}^*$, if $h_j$ is sufficiently small, then
\[
\tilde{L}_{5,h_j}(X_{k}^*-)=O(1)(dL^{c}_{h_j,\theta}(\Lambda_{c,k-1}^{*})+dQ_{h_j,\theta}(\Lambda_{k-1}^{*})
+(e^{-lX_{k-1}^*}-e^{-lX_{k}^*})\|Z_{0}\|_{\infty}),
\]
where $\Lambda_{c,k-1}^{*}$ consists of the diamonds covering the strong contact discontinuity $y=\chi_{h,\theta}(x)$ in $\Omega_{k-1}^{*}$.

Next, for $x\in(X_{k-1}^*,X_{k}^*)$, it follows from the local estimates in Section \ref{sec:LE} that
\begin{align*}
&\tilde{L}_{1,h_j}(x-)=O(1)(\tilde{L}_{1,h_j}(X_{k-1}^*+)+dL^{b}_{h_j,\theta}(\Lambda_{b,k-1}^{*})
+dQ_{h_j,\theta}(\Lambda_{k-1}^{*})%\\
+(e^{-lX_{k-1}^*}-e^{-lX_{k}^*})\|Z_{0}\|_{\infty}),\\
&\mbox{and}\\
&\tilde{L}_{5,h_j}(x-)=O(1)(\tilde{L}_{5,h_j}(X_{k-1}^*+)+dL^{c}_{h_j,\theta}(\Lambda_{c,k-1}^{*})
+dQ_{h_j,\theta}(\Lambda_{k-1}^{*})
+(e^{-lX_{k-1}^*}-e^{-lX_{k}^*})\|Z_{0}\|_{\infty}).
\end{align*}
%where $\tilde{L}_{1,h_j}(x-)$ and $\tilde{L}_{5,h_j}(x-)$ stand for the summation of all weak $1$-wave strength and weak $5$-wave strength on the line $x=x-$, respectively.

Therefore, in domain $\Omega_{k-1}^{*}$, we have
\begin{align*}
&\int\limits_{X_{k-1}^{*}}^{X_{k}^{*}}T.V.\{(\frac{v_{h_j}(\tau,\cdot)}{u_{h_j}(\tau,\cdot)},p_{h_j}(\tau,\cdot))\big|_{(\chi_{i-1},g_{i-1})}\}d\tau\\
\leq& O(1)(X^*_k-X^*_{k-1})\max_{x\in[X^*_{k-1},X^*_{k}]}(\tilde{L}_{1,h_j}(x-)+\tilde{L}_{5,h_j}(x-))\\
\leq& O(1)X^*\big(\tilde{L}_{1,h_j}(X_{k-1}^*-)+\tilde{L}_{5,h_j}(X_{k-1}^*-)+dL^{b}_{h_j,\theta}
(\Lambda_{b,k-1}^{*})+dL^{c}_{h_j,\theta}(\Lambda_{c,k-1}^{*})\\
&\hspace{4em}+dQ_{h_j,\theta}(\Lambda_{k-1}^{*})+(e^{-lX_{k-1}^*}-e^{-lX_{k}^*})\|Z_{0}\|_{\infty}\big).
\end{align*}

Then by \eqref{estimate-Q}, we complete the proof.

\end{proof}

Now, by applying Theorem \ref{theorem-exist} and Lemma \ref{lemma-tildeE} and passing the limit $h_j\rightarrow0$, we obtain that the equations satisfied by the integral average of the weak solution of \eqref{eq-reaction-1} are
\begin{align}
(g&(x)-\chi(x))\bar{\rho} \bar{u}=-{y}^{(0)}\bar{\rho}_{0}\bar{u}_{0}+O(1)\delta_*^2,\label{averge-w-1}\\
(g&(x)-\chi(x))(\bar{\rho} \bar{u}^2+\bar{p})=-{y}^{(0)}(\bar{\rho}_{0}\bar{u}_{0}^2+\bar{p}_{0})+\int_{0}^{x}(g'(\tau)-\chi'(\tau))\bar{p}d\tau
+O(1)\delta_*^2,\label{averge-w-2}\\
(g&(x)-\chi(x))\bar{\rho} \bar{u}(\frac{\gamma \bar{p}}{(\gamma-1)\bar{\rho}}+\frac{1}{2}\bar{u}^2)\nonumber\\
=-&{y}^{(0)}\bar{\rho}_{0} \bar{u}_{0}(\frac{\gamma \bar{p}_{0}}{(\gamma-1)\bar{\rho}_{0}}+\frac{1}{2}\bar{u}_{0}^2)%\nonumber\\
%&\hspace{10.2em}
+q_0\int_0^x (g(\tau)-\chi(\tau))\bar{\rho}\phi(\bar{T})\bar{Z}d\tau
+O(1)\delta_*^2,\label{averge-w-3}\\
\mbox{and}\nonumber\\
(g&(x)-\chi(x))(\bar{\rho} \bar{u} \bar{Z})=-{y}^{(0)}\bar{\rho}_{0}\bar{u}_{0}\bar{Z}_{0}-\int_0^x (g(\tau)-\chi(\tau))\bar{\rho} \phi(\bar{T})\bar{Z}d\tau+O(1)\delta_*^2.\label{averge-w-4}
\end{align}

\subsection{Proof of Theorem \ref{theorem-Quasi-One}}
Finally, we can show Theorem \ref{theorem-Quasi-One} now.
\begin{proof}%[Proof of Theorem \ref{theorem-Quasi-One}]
Let $A(x)=g(x)-\chi(x)$, and let $A(0)=-{y}^{(0)}$, then equations \eqref{averge-w-1}-\eqref{averge-w-4} become
\begin{equation}\label{4.37s}
\begin{cases}
&\bar{\rho} \bar{u}A(x)=\bar{\rho}_{0}\bar{u}_{0}A(0)+O(1)\delta_*^2,\\
&(\bar{\rho} \bar{u}^2+\bar{p})A(x)=(\bar{\rho}_{0}\bar{u}_{0}^2+\bar{p}_{0})A(0)+\int\limits_{0}^{x}A'(\tau)\bar{p}d\tau
+O(1)\delta_*^2,\\
&(\frac{\gamma \bar{p}}{(\gamma-1)\bar{\rho}}+\frac{1}{2}\bar{u}^2)\bar{\rho} \bar{u}A(x)
=(\frac{\gamma \bar{p}_{0}}{(\gamma-1)\bar{\rho}_{0}}+\frac{1}{2}\bar{u}_{0}^2)\bar{\rho}_{0} \bar{u}_{0}A(0)
+q_0\int_0^x A(\tau)\bar{\rho}\phi(\bar{T}) \bar{Z}d\tau+O(1)\delta_*^2,\\
&\bar{\rho} \bar{u} \bar{Z}A(x)=\bar{\rho}_{0}\bar{u}_{0}\bar{Z}_{0}A(0)-\int_0^x A(\tau)\bar{\rho}\phi(\bar{T})\bar{Z}d\tau+O(1)\delta_*^2.
\end{cases}
\end{equation}
On the other hand, by Lemma \ref{lemma-Quasi-One-Ex} and Lemma \ref{IntTV}, system \eqref{eq-QO2}
%\begin{equation*}
%\left\{
%\begin{array}{l}
%\rho_A u_A A(x) =\bar{\rho}_{0}\bar{u}_{0}A(0),\\
%(\rho_A u_A^2+p_A) A(x)=(\bar{\rho}_{0}\bar{u}_{0}^2+\bar{p}_{0})A(0)+\int_0^x A'(\tau)p_Ad\tau,\\
%(\frac{\gamma p_A}{(\gamma-1)\rho_A}+\frac{1}{2}u_A^2)\rho_A u_A A(x) =(\frac{\gamma \bar{p}_{0}}{(\gamma-1)\bar{\rho}_{0}}+\frac{1}{2}\bar{u}_{0}^2)\bar{\rho}_{0}\bar{u}_{0} A(0)+q_0\int_0^x A(\tau)\rho_A \phi(T_A)Z_A d\tau,\\
%\rho_A u_A Z_A A(x) =\bar{\rho}_{0}\bar{u}_{0}\bar{Z}_{0}A(0) -\int_0^x A(\tau)\rho_A \phi(T_A)Z_A d\tau.
%\end{array}
%\right.
%\end{equation*}
admits a unique solution $U_A(x)=(\rho_{A},u_{A},p_{A},Z_{A})^\top$ satisfying \eqref{estimate-QO}.

By the straightforward calculation from the fourth equation of \eqref{4.37s}, we have that
\begin{align*}
&\bar{Z} =\bar{Z}_{0}\exp(-\frac{1}{\bar{\rho}_{0} \bar{u}_{0} A(0)}\int_0^x A(\tau)\bar{\rho}\phi(\bar{T})d\tau)+O(1)\delta_*^2.%\\
%\mbox{and }\qquad\qquad\qquad\qquad&\\
\end{align*}
Similarly, from the fourth equation of \eqref{eq-QO2}, we have
\begin{align*}
&Z_A =\bar{Z}_{0}\exp(-\frac{1}{\bar{\rho}_{0} \bar{u}_{0} A(0)}\int_0^x A(\tau)\rho_A\phi(T_A)d\tau).
\end{align*}
%in the same way as in the proof of lemma \ref{lemma-Quasi-One-Ex}, we can obtain
Then
\[
|\bar{Z}-Z_A|\leq O(1)\delta_0\max_{0\leq\tau\leq x}(|\bar{\rho}-\rho_A|+|\bar{T}-T_A|)+O(1)\delta_*^2.
\]

Next, from the first three equations of the two systems \eqref{eq-QO2} and \eqref{4.37s}, we have
\begin{equation*}
\begin{cases}
&(\bar{\rho}-\rho_A) \bar{u} A(x)+ \rho_A (\bar{u}-u_A) A(x)=O(1)\delta_*^2,\\
&(\bar{\rho}-\rho_A) \bar{u}^2 A(x)+\rho_A(\bar{u}^2-u_A^2)A(x)+(\bar{p}-p_A)A(x)
=\int_0^x A'(\tau)(\bar{p}-p_A)d\tau+O(1)\delta_*^2,\\
&\frac{\gamma}{\gamma-1}(\bar{p}-p_A)\bar{u}A(x) +\frac{\gamma}{\gamma-1}p_A(\bar{u}-u_A) A(x)+\frac{1}{2}(\bar{\rho}-\rho_A)\bar{u}^3A(x) +\frac{1}{2}\rho_A(\bar{u}^3-u_A^3)A(x)\\
&=q_0\int_0^x A(\tau)(\bar{\rho}\phi(\bar{T})\bar{Z}-\rho_A\phi(T_A)Z_A)d\tau+O(1)\delta_*^2.
\end{cases}
\end{equation*}

From Lemma \ref{IntTV}, we easily have the following fact that
\begin{align*}
&\Big|\int_0^x A'(\tau)(\bar{p}-p_A)d\tau\Big|\leq O(1)\delta_*\max_{0\leq\tau\leq x}|\bar{p}-p_A|,%\\
\end{align*}
and from Lemma \ref{lemma-g-chi} and the estimates on the error terms in Section \ref{subsec:4.3s}, we also have that
\begin{align*}
&\Big|\int_0^x A(\tau)\big(\bar{\rho}\phi(\bar{T})\bar{Z}-\rho_A\phi(T_A)Z_A\big)d\tau\Big|\\
&\leq A(x)(\rho_A u_A Z_A- \bar{\rho} \bar{u} \bar{Z})+O(1)\delta_*^2,\\
&\leq O(1)\delta_*\max_{0\leq\tau\leq x}(|\bar{\rho}-\rho_A|+|\bar{T}-T_A|+|\bar{u}-u_A|)+O(1)\delta_*^2.
\end{align*}
Therefore, it follows from the implicit function theorem that there exists a constant $C>0$, such that
\[
\max_{x\geq 0}|\bar{U}-U_A|\leq C\delta_*^2,
\]
for sufficiently small $\delta_*$. This completes the proof.
\end{proof}

%\section*{ Acknowledgement}
%The research of  Wei Xiang was supported in part by the CityU Start-Up Grant for New Faculty 7200429(MA), and the General Research Fund of Hong Kong under GRF/ECS Grant 9048045 (CityU 21305215). Yongqian Zhang was supported in part by NSFC Project 11421061, NSFC Project 11031001, NSFC Project 11121101, the 111 Project B08018 (China),and  by Shanghai Natural Science Foundation 15ZR1403900.

\setcounter{figure}{0}
\renewcommand{\thefigure}{A.\arabic{figure}}

%\appendix

\textbf{Ackowledgments}: The research of  Wei Xiang was supported in part by the CityU Start-Up Grant for New Faculty 7200429(MA), and the Research Grants Council of the HKSAR, China (Project No. CityU 21305215 and Project No. CityU 11332916). The research of Yongqian Zhang and Qin Zhao was supported in part by NSFC Project 11421061, NSFC Project 11031001, NSFC Project 11121101, the 111 Project B08018 (China),and the Shanghai Natural Science Foundation 15ZR1403900.
%The research of Qin Zhao was supported in part by the ... .

\bibliographystyle{plain}
%\bibliography{Reference}

\end{document}